\documentclass[reqno]{amsart}            
\usepackage{graphicx,cite,color}
\usepackage[bookmarks,bookmarksnumbered,bookmarksopen,colorlinks,backref,linkcolor=blue,citecolor=red]{hyperref}%
\usepackage{xcolor}
\usepackage{dsfont}
\usepackage{mathrsfs}
\usepackage{amsmath, amssymb ,amsthm, amsfonts, amsgen}

\vfuzz2pt 
\newtheorem{theorem}{Theorem}[section]
\newtheorem{definition}{Definition}[section]
\newtheorem{lemma}{Lemma}[section]
\newtheorem{corollary}{Corollary}[section]
\newtheorem{remark}{Remark}[section]
\numberwithin{equation}{section}
\newtheorem{proposition}{Proposition}[section]

\textwidth=16.5cm
\oddsidemargin=4mm
\evensidemargin=4mm
\mathsurround=2pt
\begin{document}

\title[Inner-layer asymptotics for coupling in  partially perforated domains]
{Inner-layer asymptotics in partially perforated domains: coupling across flat and oscillating interfaces}
\author[Taras Mel'nyk]{ Taras Mel'nyk$^{\natural, \flat}$}
\address{\hskip-12pt
$^\natural$ Institute of Applied Analysis and Numerical Simulation,
Faculty of Mathematics and Physics,  University of Stuttgart\\
Pfaffenwaldring 57,\ 70569 Stuttgart,  \ Germany
 \newline
$^\flat$ Faculty of Mathematics and Mechanics\\
Taras Shevchenko National University of Kyiv\\
Volodymyrska str. 64,\ 01601 Kyiv,  \ Ukraine\\
}
\email{Taras.Melnyk@mathematik.uni-stuttgart.de}

\begin{abstract} \vskip-10pt
The article examines a boundary-value problem in a domain consisting of  perforated and imperforate regions, with Neumann conditions prescribed at
 the boundaries of the perforations. Assuming the porous medium has  symmetric, periodic structure  with a small period $\varepsilon,$  we analyse the limit behavior of the problem as $\varepsilon \to 0.$  A crucial aspect of this study is deriving  correct coupling conditions at the common interface, which is achieved using inner-layer asymptotics.
For the flat interface, we construct and justify a complete asymptotic expansion of the solution in the $H^1$-Sobolev space.
Furthermore, for the $\varepsilon$-periodically oscillating interface of amplitude $\mathcal{O}(\varepsilon),$  we provide an approximation to the solution and establish the corresponding asymptotic estimates in $H^1$-Sobolev spaces.
\end{abstract}

\keywords{Asymptotic approximation, partially perforated domains, inner-layer asymptotics, oscillating interface
\\
\hspace*{9pt} {\it MOS subject classification:} \   35B27,  35B40, 35B25,  35J25
}

\maketitle
\tableofcontents
\section{Introduction}\label{Sect1}

Coupled systems with distinct structures play a crucial role across various scientific disciplines. In recent years, extensive research has focused on exploring different models within coupled media of varying structures (see, e.g., \cite{Egg-Ryb-2021,Kro-Ola-Ryb-2023,Str-Egg-Ryb-2023,Wei-Koc-Hel-2022,Helmig-2020,Roh-Ryb-2016}). A key challenge in these studies is determining the transmission (coupling) conditions at the interface between them. To address this, researchers have proposed a range of generalized interface conditions. However, their justification has been carried out with varying degrees of rigor, primarily through numerical analysis and comparisons with classical transmission conditions. Additionally, asymptotic approaches have been developed to examine models within coupled media exhibiting different periodic microstructures.

\paragraph*{{\bf Coupling across flat interface}}
To the best of my knowledge, the paper \cite{Pan-1981} was the first to introduce a general methodology for homogenizing boundary value problems involving contact between two periodic inhomogeneous half-spaces with a flat interface. The author examined both scenarios: one in which the two media are separated by a thin inhomogeneous layer with a periodic structure, and another where they are in direct contact. In this approach, internal boundary layers were considered only in the presence of a thin inhomogeneous separating layer. For the case of direct contact between two periodic inhomogeneous half-spaces, only general observations were provided in \S 7.

The first asymptotic results for a bounded partially perforated domain with a flat interface were established in \cite{Jag-Ole-Sham-1993} for the Poisson equation, assuming zero Neumann conditions on the perforation boundaries. However, the proposed approximation to the solution is discontinuous at the interface.  Moreover, asymptotic estimates for the difference between the original solution and its approximation were provided separately for the perforated and non-perforated subdomains. These estimates are of order $\mathcal{O}(\sqrt{\varepsilon})$ (see Theorem 2), where $\varepsilon$ represents both the perforation period and the characteristic size of the hole diameters.

In \cite{Ole-Shap-1995}, the authors demonstrated that when the diameter of the holes is asymptotically smaller than the perforation period, the holes have no effect on the leading term of the asymptotics of the solution to the Poisson equation in a partially perforated domain with zero Neumann boundary conditions on the hole boundaries. Additionally, they derived estimates for the difference between the solutions of the initial and limit problems in the $H^1$-Sobolev norm.

A boundary-value for the Poisson equation in a partially perforated domain  with Robin conditions $\partial_{\boldsymbol{\nu}_\varepsilon}u_\varepsilon + \varepsilon^k u_\varepsilon =0$ $(k\in \Bbb R)$ at the cavity boundaries  was studied in  \cite{Ole-Shap-1995-Robin}. Three cases were discovered in the asymptotic behaviour of the solution $u_\varepsilon$: $k <1,$ $k=1$ and $k>1.$  In each case, a corresponding homogenised problem was derived and asymptotic estimates were obtained in the subdomains as in \cite{Jag-Ole-Sham-1993}.

In the case of zero Dirichlet conditions on the boundaries of the holes, the first term of the asymptotics is a solution of the Poisson equation in a non-perforated subdomain with zero Dirichlet condition at the interface \cite{Ole-Sham-1994}. In \cite{Jag-Mik-1996}, the conjugation conditions for the Stokes system in a partially perforated infinite strip with the Dirichlet condition on the boundaries of the holes were derived. The authors constructed correctors for the pressure and velocity and proved $L^2$-estimates for them.

In \cite{Pan-1999}, a linear stationary problem of the thermal field in an infinite strip was examined, consisting of a highly conductive infinite sub-strip and a periodically perforated infinite sub-strip. Dirichlet conditions with a small perforation period $\varepsilon$ were imposed at the perforation boundary, while the thermal conductivity coefficient in the highly conductive region was represented by a large parameter $\omega$. An asymptotic expansion was constructed as $\omega \to \infty$ and $\varepsilon \to 0$, incorporating boundary layers. However, the partial sums proposed for justification lack continuity at the interface. The study also considered Neumann conditions on the perforations, and Theorem 3 provided $L^2$-estimates for the difference between the solution of the original problem and those of the corresponding limit problems in the sub-strips.
It should be noted that the consideration of the problem in an infinite strip eliminates the need for additional boundary layer constructions, significantly simplifying the analysis.

\paragraph*{{\bf Coupling across oscillating interface}}
It is often the case that  interfaces in coupled media exhibit rapidly oscillating structures. Understanding the impact of such complex interfaces on the dynamics of coupled systems is crucial. Numerous studies have focused on the homogenization of boundary value problems in domains composed of two heterogeneous media separated by rapidly oscillating interfaces with varying amplitudes (see, e.g., \cite{Don-Piat-2010,Don-Gia-2016,Don-2019,Don-Pet-2022} and the references therein). The following  imperfect contact
 transmission conditions were considered in these papers: the continuity of the flux and the proportionality of the flux  to the solution's jump at the interface. In addition,  the amplitude of the interface oscillations is of order $\varepsilon^\kappa,$ where $\kappa \ge 0,$ and the
 proportionality coefficient appearing in the transmission conditions is of order $\varepsilon^\gamma,$ with $\gamma \in \Bbb R.$
It is interesting to note that for the main values of the parameters $\kappa > 0$ and $\gamma$ there is no influence of the interface microstructure in the corresponding homogenized problem.  This influence is present  when  $\kappa =0 $ (see \cite{Don-Pet-2022} and
\cite{Gaudiello-1995,Nev-Kel-1997} for the classical transmission conditions when the solution and the normal flux are continuous across the interface).

\smallskip

The present paper examines a boundary-value problem for the Poisson equation in a bounded, partially perforated domain, where Neumann conditions are imposed on the boundaries of the perforations. The perforated region exhibits a symmetric, periodic structure with a small period $\varepsilon.$ Two distinct configurations for the separation of these regions are considered: one featuring a flat interface and the other incorporating an $\varepsilon$-periodically oscillating interface with an amplitude of order $\mathcal{O}(\varepsilon).$ The classical transmission conditions are prescribed at the interface, with one exception (see below).

In the first case, we construct a complete asymptotic expansion in the entire partially perforated domain and establish the corresponding asymptotic estimates for the solution in the Sobolev space. The construction of a continuous asymptotic approximation with arbitrary accuracy within the entire bounded, partially perforated domain has long remained an open problem. Only one additional assumption is required: the symmetry of the perforation cell.
Symmetry plays a crucial and intriguing role in nature. For example, symmetry is fundamental to the formation of crystals; in chemistry, molecules with symmetrical shapes tend to be more stable; and in biology, many proteins exhibit symmetrical structures essential to their function. In this paper, we demonstrate how the symmetry of the perforation cell facilitates the construction of the asymptotic expansion for the solution.

An asymptotic expansion provides detailed information about the structure of the solution, which is essential for the accurate modeling of complex physical phenomena. In our case, we establish approximations for both the solution and its gradient across the entire partially perforated domain
with arbitrary accuracy and rigorously justify higher-order transmission conditions (see Theorem~\ref{Theorem-main}).

In the second case (oscillating interface), it was possible to construct a two-term asymptotic approximation and prove the corresponding estimates in Sobolev spaces in the perforated and non-perforated regions. This case is characterized by the fact that the influence of the interface microstructure does not manifest itself  in the homogenized problem, which is consistent with the results obtained in \cite{Don-Piat-2010,Don-2019} for 
imperfect contact  conditions. However, only by using inner-layer asymptotics, this influence can be detected and identified in the second terms of the asymptotics, as demonstrated in this paper.

It has also been shown that if the second transmission condition is
\begin{equation*}
  D^- \nabla_x u^-_{\varepsilon} \cdot \vec{\nu}_\varepsilon = \nabla_x u^+_{\varepsilon} \cdot \vec{\nu}_\varepsilon  + \Theta(x_2, \tfrac{x_2}{\varepsilon})
\ \ \text{on} \ \ \left\{x\colon \ x_1= \varepsilon \,  \ell(\tfrac{x_2}{\varepsilon}), \  x_2 \in (0, d)\right\},
\end{equation*}
where $\Theta(x_2, \xi_2), \ x_2\in [0, d], \, \xi_2 \in [0,1],$ is $1$-periodic  in $\xi_2$ and smooth given function, then the corresponding second conjugation condition in the homogenized problem is as follows
$$
D^-\, \partial_{x_1}v^-_0(x)\big|_{x_1 =0} =\  \upharpoonleft\!\! Y\!\!\upharpoonright  {h_{11}} \, \partial_{x_1}v^+_0(x)\big|_{x_1 =0} + \widehat{\Theta}(x_2), \quad x_2 \in (0,d),
$$
where the function $\widehat{\Theta}(x_2) = \int_{0}^{1}\Theta(x_2, \xi_2) \, \sqrt{1+ |\ell^\prime(\xi_2)|^2} \, d\xi_2$ exhibits
the impact of the interface microstructure. For more details, see Sect.~\ref{Sect-6}.

The paper has the following structure. The precise formulation of the problem in the first case  is presented  in Sect.~\ref{Sec-2}.
 Section~\ref{Sect_3} is devoted to the construction of a formal asymptotic expansion of the solution.
 The asymptotic expansion consists of three parts:   a power series in degrees of the parameter $\varepsilon$ in the non-perforated region,
 a standard two-scale asymptotic ansatz in the perforated region, and an inner-layer  series  in a vicinity of the interface between the perforated and non-perforated parts. Here, the solvability of all interconnected recurrent procedures that determine the coefficients of these series  is proven, paying more attention to the inner-layer asymptotics and  the influence of the symmetry of the periodicity cell on the solutions of these problems.
 In Sect.~\ref{Sect-4}, using these series, we construct a series in the entire partially perforated domain $\Omega_\varepsilon$, prove that it is the asymptotic expansion for the solution in the Sobolev space $H^1(\Omega_\varepsilon).$  The case of the rapidly oscillating interface is studied in Sect.~\ref{Sect-5}.
The article ends with a section of conclusions and remarks.

\section{The problem statement}\label{Sec-2}

For clarity and  brevity, the problem is considered in $\Bbb R^2.$ Of course, this approach does not depend on the dimensionality of the space.
Let $G_0$ be a finite union of smooth disjoint nontangent domains strictly lying in the unite square
$\square := \{\xi=(\xi_1, \xi_2) \in \Bbb R^2\colon \ 0 <  \xi_1 <  1, \ 0 <  \xi_2 < 1\}.$ Denote by
$
Y := \overline{\square}  \setminus \overline{G_0}
$
(see Fig.~\ref{f1}).
The main our assumptions is the symmetry of $Y$  with respect the lines
$\{\xi\colon \ \xi_1 = \frac{1}{2}\}$ and $\{\xi\colon \ \xi_2 = \frac{1}{2}\}.$
\begin{figure}[htbp]
  \centering
  \includegraphics[width=4.5cm]{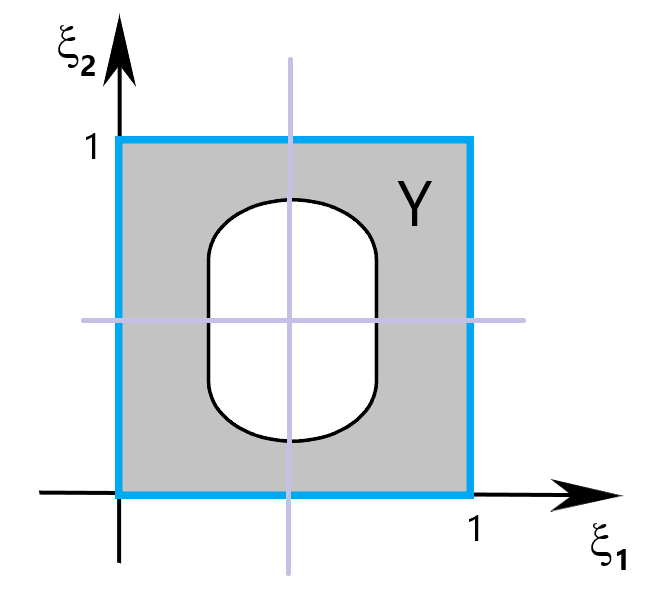}
  \caption{The periodicity cell  $Y$}\label{f1}
   \end{figure}

Let $\Omega^+$ be a square $\{ x=(x_1, x_2) \in \Bbb R^2\colon \ 0 < x_1 < d, \ \ 0 < x_2 < d\},$ and
$\Omega^-$ be a bounded domain in $\Bbb R^2$ lying in the left half-plane $\{ x \in \Bbb R^2\colon \  x_1 < 1\}.$ It is also assumed that there exists a small positive number $\varrho_0$ such that
$$
\Omega^- \cap \{ x\colon \  - \varrho_0 < x_1 < 0\} = \{ x\colon \  - \varrho_0 < x_1 < 0, \ \ 0< x_2 < d\}.
$$
We assume that $\Gamma^- := \partial\Omega^- \setminus \mathcal{Z}$ is a smooth curve, where $\mathcal{Z}:= \{x\colon x_1=0, \ \ 0< x<d\}.$
Denote by
$$
\Omega := \Omega^- \cup \mathcal{Z} \cup \Omega^+.
$$

Let  $\varepsilon = \dfrac{d}{N},$ where $N$ is a large positive integer,  and
$$
 \mathcal{Y}_{{\varepsilon}} := \bigcup_{k,n \, \in \, \Bbb Z} \big( {{\varepsilon}}Y + {\varepsilon} (k, n)\big),
$$
where $\varepsilon Y $ is the homothety of $Y$ with coefficient $\varepsilon.$

\begin{figure}[htbp]
  \centering
  \includegraphics[width=7cm]{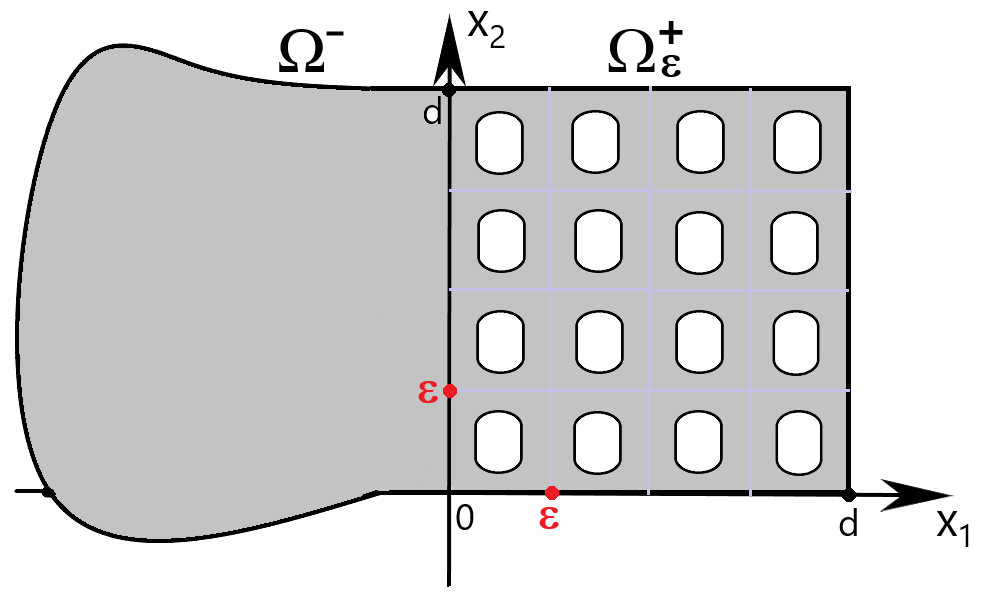}
  \caption{The partial perforated domain  $\Omega_\varepsilon$}\label{f2}
   \end{figure}

Then the perforated square  $\Omega^+_\varepsilon:= \Omega^+ \cap  \mathcal{Y}_{{\varepsilon}}$ and
partially perforated domain
$
\Omega_\varepsilon := \Omega^-   \cup {\mathcal{Z}} \cup \Omega^+_{{\varepsilon}}
$
(see Fig.~\ref{f2}).
In the paper, the $-$ index at the top will always indicate the connection to the left part of the domain, and $+$ to the right one.

In $\Omega_\varepsilon$ we consider the problem
 \begin{equation}\label{original Problem}
\left\{
\begin{array}{rcll}\displaystyle
\Delta_{ x}u_{\varepsilon}(x) &=& f(x),  & \quad x \in  \Omega_\varepsilon,
\\[2pt]
\partial_{\vec{\nu}_\varepsilon}u_{\varepsilon}(x) &=& 0,  & \quad x \in \partial G_{\varepsilon}  \ \ (\text{the boundaries of the holes}),
\\[2pt]
u_{\varepsilon}(x) &=& 0,& \quad x  \in \partial\Omega,
\end{array}
\right.
\end{equation}
where \ $ \Delta_{x} := \partial^2_{x_1} + \partial^2_{x_2 },$ \
$\partial_{x_i} = \frac{\partial}{\partial x_i},$ \ $\partial^2_{x_i} = \frac{\partial^2}{\partial x_i \partial x_i},$
$\partial_{\vec{\nu}_\varepsilon}u := \nabla_x u \cdot \vec{\nu}_\varepsilon,$
$\vec{\nu}_\varepsilon = \big(\nu_1(\tfrac{x}{\varepsilon}), \, \nu_2(\tfrac{x}{\varepsilon}) \big)$ is the unit  normal to $\partial G_{\varepsilon},$
external to $\Omega^+_\varepsilon.$

To construct an asymptotic expansion for the solution, the problem data must be infinitely smooth functions.
This requirement is due to the fact that the expansion coefficients are determined in terms of the derivatives of the preceding coefficients. Therefore, we assume that the source density is specified by two infinitely smooth functions:
$$
 f(x)= \left\{
  \begin{array}{ll}
    f^+(x), & x \in \Omega^+; \ \   f^+ \in C^\infty_0(\Omega^+),
\\[2pt]
  f^-(x), & x \in \Omega^-; \ \   f^- \in C^\infty_0(\Omega^-).
  \end{array}
\right.
$$

In accordance with the theory of boundary-value problems, it can be stated that for any fixed value of the parameter $\varepsilon,$
 there is a unique solution $u_\varepsilon$ to problem \eqref{original Problem}.

 The main objective of this paper is to construct an asymptotic expansion for the solution~$u_\varepsilon$ in the Sobolev space $H^1(\Omega_\varepsilon)$ as $\varepsilon \to 0$, along with proving the corresponding asymptotic estimates. Specifically, we derive the homogenized problem and higher-order transmission conditions, providing approximations for both the solution and its gradient in the  entire  partially perforated domain $\Omega_\varepsilon$, particularly near the interface $\mathcal{Z}$, with arbitrarily specified accuracy.

\begin{remark}
In all statements in the paper,  $\varepsilon= \frac{d}{N}$ is a discrete parameter $(N\in \Bbb N).$  Therefore, $\varepsilon \to 0$
means that $N \to +\infty.$
\end{remark}
\begin{remark}\label{Rem-2-2}
 The results of the article remain valid for the case of more general elliptic differential equations:
 $\mathrm{div}_x (A^\pm_\varepsilon \, \nabla_x u^\pm_\varepsilon) = f^\pm,$ where $A^\pm_\varepsilon =\{a^\pm_{i j}(\frac{x}{\varepsilon})\}$
 are  matrix with $\varepsilon$-periodic coefficients.  In this case, additional assumptions about the symmetry of the coefficients are necessary (see \cite{Melnyk-1995}).
\end{remark}

\section{Contraction of formal asymptotics}\label{Sect_3}

We start looking for the asymptotics in $\Omega^-.$ Since there is no any perturbation in this region, we seek an asymptotic expansion for the solution  in the form of a power series
\begin{equation}\label{as-V}
   V_\varepsilon^\infty:= \sum_{k=0}^{\infty}\varepsilon^k \, v^-_k(x) \ \ \text{in} \ \Omega^- .
\end{equation}
Then, formally substituting it into the Poisson equation and the Dirichlet boundary conditions, we get the following relations for the coefficients:
\begin{equation}\label{v_k-}
\Delta_x v^-_0(x) = f^-(x) \quad \text{and} \quad \Delta_x v^-_k(x) = 0 \ \  \text{in} \ \Omega^- , \qquad v^-_k(x) = 0 \ \ \text{on} \ \Gamma^- := \partial\Omega^- \setminus \mathcal{Z}.
\end{equation}

\subsection{Analysis in the perforated part}\label{subsect-3-1}

The methodology for constructing an asymptotic expansion in a strongly inhomogeneous periodic media is well established
 (see, e.g., \cite[Chapt. 4, \S 2]{Bach-Pan-1989}, \cite[Chapt. 7,]{Cio-Don-1999}, \cite[Chapt. 2, \S 4,]{Ole-Ios-Sha-1991}).  Consequently, we will briefly recall it and  focus more closely on examining how symmetry affects the asymptotic coefficients and the resulting implications.

In $\Omega_\varepsilon^+,$ we use a standard two-scale ansatz
\begin{equation}\label{as-U}
  U^\infty_\varepsilon := \sum_{k=0}^\infty\varepsilon^k\sum_{|\alpha|=k}{N_\alpha(\tfrac{x}{\varepsilon}) \, D^\alpha v_\varepsilon(x)},
\end{equation}
where $\alpha=(\alpha_1,\ldots,\alpha_k)$ is a multi-index,  $\alpha_i\in \{1, 2\},$
$|\alpha|=k$ is the number of the components of $\alpha$ and it is called its \textit{length},
$$
 D^\alpha v := \frac{\partial^k v}{\partial{x_{\alpha_1}}\ldots\partial{x_{\alpha_k}} },{} \quad  D^0 v = v,
$$
$N_\alpha(\xi)$ is  $1$-periodic in $\xi_1$ and $\xi_2,$ \ $\xi = \frac{x}{\varepsilon}=\big(\frac{x_1}{\varepsilon}, \, \frac{x_2}{\varepsilon}\big),$ \ $\xi =(\xi_1, \xi_2),$ {} \  $N_0\equiv 1.$

Using the chain rule and substituting  expansions \eqref{as-U} into the equation of problem \eqref{original Problem} and the Neumann conditions on the hole boundaries, and collecting terms with the same powers of $\varepsilon,$ we obtain
\begin{align*}
  \Delta_{x}U^\infty_\varepsilon =& \ \bigg(\varepsilon^{-1}\sum_{|\alpha|=1} \Delta_{\xi}N_\alpha(\xi) \, D^\alpha v_\varepsilon(x)
\\
&+ \sum_{k=2}^\infty\varepsilon^{k-2}\sum_{|\alpha|=k}\Big(\Delta_{\xi}N_\alpha(\xi) + 2 \,\partial_{\xi_{\alpha_1}}N_{\alpha_2 \alpha_3\ldots\alpha_k}(\xi)
+ \delta_{\alpha_1, \alpha_2} N_{\alpha_3\ldots\alpha_k}(\xi)\Big) \, D^\alpha v_\varepsilon(x)\bigg)\bigg|_{\xi =\frac{x}{\varepsilon}}
\\
\approx  & \ f^+(x), \quad x \in \Omega^+_\varepsilon,
\end{align*}
\begin{align*}
\partial_{\vec{\nu}_\varepsilon}U^\infty_\varepsilon  =& \  \bigg(\sum_{|\alpha|=1} \Big(\partial_{\vec{\nu}_\xi} N_\alpha(\xi)  + \nu_{\alpha}(\xi)\Big)D^\alpha v(x)
  \\
   & + \sum_{k=2}^\infty\varepsilon^{k-1}\sum_{|\alpha|=k}\Big(\partial_{\vec{\nu}_\xi}N_\alpha(\xi) + \nu_{\alpha_1}\!(\xi)\,
N_{\alpha_2\ldots\alpha_k}(\xi) \Big) D^\alpha v(x)\bigg)\bigg|_{\xi =\frac{x}{\varepsilon}}
\approx   0, \quad x \in \ \partial G_\varepsilon,
\end{align*}
where here and further  $\delta_{\alpha_1, \alpha_2}$ is the Kronecker delta.
\begin{remark}
  At the end of these equalities is  the symbol "$\approx$" that means we want
  that $U^\infty_\varepsilon$ to be an approximation to the solution.
    Then, we  have to determine the coefficients  to satisfy these equations.
\end{remark}

To satisfy this equations, we first neutralize the micro-variables $\xi$  by requiring that the coefficients $N_\alpha$
be solutions to the following problems, respectively:\\
for $|\alpha|=1,$   find  $N_1, N_2 \in H^1_{per}(Y) := \{N\in H^1(Y)\colon \ \ N \ \text{is 1-periodic in} \ \xi_1 \ \text{and} \ \xi_2\}:$
\begin{equation}\label{cell1}
\left\{
\begin{array}{rcll}
\Delta_{\xi}N_1(\xi) &=& 0 & \text{in} \ \ Y,
\\[2pt]
\partial_{\vec{\nu}_\xi} N_1(\xi) &=& -\nu_{1}(\xi) & \text{on} \ \ \partial G_0,
\\[2pt]
\langle N_1 \rangle_Y &=& 0,
\end{array}
\right.
\end{equation}
\begin{equation}\label{cell2}
\left\{
\begin{array}{rcll}
\Delta_{\xi}N_2(\xi) &=& 0 &  \text{in} \ \ Y,
\\[2pt]
\partial_{\vec{\nu}_\xi} N_2(\xi) &=& -\nu_{2}(\xi) &  \text{on} \ \ \partial G_0,
\\[2pt]
\langle N_2 \rangle_Y &=& 0;
\end{array}
\right.
\end{equation}
for $|\alpha|> 1, $ find  $N_\alpha \in H^1_{per}(Y)$:
\begin{equation}\label{cell3}
\left\{
\begin{array}{rcll}
\Delta_{\xi}N_\alpha(\xi)  &=& h_\alpha -  \delta_{\alpha_1, \alpha_2}\,  N_{\alpha_3\ldots\alpha_k}(\xi) - 2\, \partial_{\xi_{\alpha_1}}N_{\alpha_2 \alpha_3\ldots\alpha_k}(\xi) & \text{in} \ \ Y,
\\[3pt]
\partial_{\vec{\nu}_\xi}N_\alpha(\xi) &=& - \nu_{\alpha_1}\!(\xi)\,
N_{\alpha_2\ldots\alpha_k}(\xi)  &  \text{on} \ \ \partial G_0,
\\[3pt]
\langle N_\alpha \rangle_Y &=& 0,
\end{array}
\right.
\end{equation}
where $h_\alpha$ is a constant,
$$
\langle N \rangle_Y := \frac{1}{\upharpoonleft\!\! Y\!\!\upharpoonright } \int_Y N(\xi) \, d\xi, \qquad \upharpoonleft\!\! Y\!\!\upharpoonright \ := meas(Y).
$$

The solvability of this recurrent sequence of problems follows from the following lemma (for the proof see, e.g., \cite[Supplement, Th. 1]{Bach-Pan-1989}).

\begin{lemma}
  Let $F_0(\xi), \ F_1(\xi), F_2(\xi)$ be $1$-periodic in $\xi$ and smooth functions in $\overline{Y}.$ {}
Then there exists an unique smooth solution $N\in H^1_{per}(Y)$ to the problem
\begin{equation}\label{cell4}
\left\{
\begin{array}{rcll}
\Delta_{\xi}N(\xi)  &=& F_0(\xi) + \sum_{i=1}^{2}\partial_{\xi_i}F_i(\xi)  &  \text{in} \ \ Y,
\\[3pt]
\partial_{\vec{\nu}_\xi}N(\xi) &=&  \sum_{i=1}^{2} F_i(\xi)\, \nu_{i}(\xi) &  \text{on} \ \ \partial G_0,
\\[3pt]
\langle N \rangle_Y &=& 0,
\end{array}
\right.
\end{equation} {}
if and only if
$$
\langle F_0 \rangle_Y =0 .
$$
\end{lemma}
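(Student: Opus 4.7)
The plan is a standard variational argument. I would first put the problem in weak form on $H^1_{per}(Y)$ by multiplying the equation by a test function $\varphi \in H^1_{per}(Y)$ and applying Green's identity on $Y$. The boundary terms over opposite faces of $\partial \square$ cancel in pairs because $N$, $\nabla N$, $\varphi$ and the $F_i$ are $1$-periodic while the outward normals on paired faces are opposite, and the boundary term on $\partial G_0$ combines with the Neumann condition once one also integrates by parts the divergence-form piece $\sum_i \partial_{\xi_i} F_i$ on the right. What remains is the identity
$$
\int_Y \nabla N \cdot \nabla \varphi \, d\xi \;=\; -\int_Y F_0\, \varphi \, d\xi \;+\; \sum_{i=1}^{2} \int_Y F_i \, \partial_{\xi_i}\varphi \, d\xi,
$$
valid for every $\varphi \in H^1_{per}(Y)$. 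The orientation of $\vec{\nu}_\xi$ (external to $\Omega^+_\varepsilon$, hence the outward normal to $Y$ on $\partial G_0$) is what makes all signs line up; this is the one place that requires care.

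Necessity of the compatibility condition is then immediate. Inserting $\varphi \equiv 1$ annihilates the left-hand side as well as the $F_i$ terms, leaving $\int_Y F_0 \, d\xi = 0$, i.e.\ $\langle F_0\rangle_Y = 0$.

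For sufficiency I would work on the closed subspace $V := \{\varphi \in H^1_{per}(Y) \colon \langle \varphi\rangle_Y = 0\}$. Because $G_0$ is a disjoint union of smooth domains strictly contained in $\square$, the set $Y$ is connected, so the Poincar\'e--Wirtinger inequality holds on $Y$ for periodic functions and the Dirichlet form $a(N,\varphi) := \int_Y \nabla N\cdot \nabla\varphi\, d\xi$ is continuous and coercive on $V$. Under the hypothesis $\langle F_0\rangle_Y = 0$ the right-hand side defines a bounded linear functional on $V$; in fact the compatibility condition is exactly what makes this functional insensitive to additive constants, so it descends to the quotient $H^1_{per}(Y)/\mathbb{R}$. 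Lax--Milgram then yields a unique $N\in V$ satisfying the weak formulation, and uniqueness within $V$ upgrades to uniqueness in the full problem (since the only periodic solution of the homogeneous problem is a constant, fixed by $\langle N\rangle_Y = 0$).

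Smoothness of $N$ up to the boundary then follows from interior and boundary elliptic regularity for Neumann problems with smooth data on the smooth boundary $\partial G_0$; the periodicity condition across $\partial \square$ is an interior-type compatibility once $Y$ is glued with its translates into $\mathbb{R}^2$, so no additional boundary-layer issue arises there. The main conceptual point, if any, is the identification of $\langle F_0\rangle_Y = 0$ as precisely the obstruction to surjectivity of the Laplacian on the quotient space; everything else is routine bookkeeping.
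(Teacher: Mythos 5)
Your argument is correct and is the standard variational (Lax--Milgram) proof of this solvability lemma; the paper itself does not reprove it but defers to the reference \cite[Supplement, Th.~1]{Bach-Pan-1989}, where essentially the same weak-formulation/Fredholm-alternative reasoning on $H^1_{per}(Y)$ is used. Your handling of the sign conventions for $\vec{\nu}_\xi$ on $\partial G_0$, the cancellation on $\partial\square$ by periodicity, and the identification of $\langle F_0\rangle_Y=0$ as the obstruction all check out.
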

For problems \eqref{cell1} and \eqref{cell2},  $F_0 =0;$ for problem \eqref{cell3}, \
$F_0 = h_\alpha - \delta_{\alpha_1, \alpha_2}\, N_{\alpha_3\ldots\alpha_k} - \partial_{\xi_{\alpha_1}}N_{\alpha_2 \alpha_3\ldots\alpha_k}.$ Therefore, the constant
\begin{equation}\label{t1}
h_\alpha  =  \big\langle \delta_{\alpha_1, \alpha_2}\, N_{\alpha_3\ldots\alpha_k} + \partial_{\xi_{\alpha_1}}N_{\alpha_2 \alpha_3\ldots\alpha_k}\big\rangle_Y .
\end{equation}

Let us now examine how the symmetry of the periodicity cell affects the coefficients $N_\alpha.$  To do this, we introduce the reflection operator
$S_l$  with respect to the variable $\xi_l,$ $l\in \{1, 2\}$:
$$
S_l\xi = \big((-1)^{\delta_{l, 1}}\xi_1, \ (-1)^{\delta_{l, 2}}\xi_2 \big),
\ i.e.,
   \quad
  S_1\xi = (-\xi_1,  \xi_2 ),  {}
\quad S_2\xi = (\xi_1,  -\xi_2 ).
$$
It turns out that that if right-hand side of problem \eqref{cell4}  is either odd or even in some variable, then the solution inherits the same symmetry.
This effect for perforated domains was observed in \cite[Lemma 2.2]{Melnyk-1995}. For the convenience of the reader, I will present that statement in relation to problem \eqref{cell4}.
\begin{lemma}\label{Lemma-3-2}
  Let $N$ be a solution to problem \eqref{cell4}.  \ If, for some $l\in\{1, 2\},$ the function $F_0,$ $F_1,$ and $F_2$  satisfy
\begin{equation}\label{s1}
  F_i(S_l\xi) = (-1)^{\delta_{l, i}} F_i(\xi), \quad \xi \in \overline{Y}, \quad i\in\{0, 1, 2\},
\end{equation}
then the solution $N$ is even in $\xi_l,$ i.e.,
$
N(S_l\xi) = N(\xi), \  \xi \in \overline{Y}.
$

If, for some $l\in\{1, 2\},$
\begin{equation}\label{s2}
  F_i(S_l\xi) = (-1)^{\delta_{l, i}+1} F_i(\xi), \quad \xi \in \overline{Y}, \quad i\in\{0, 1, 2\},
\end{equation} {}
then the solution $N$ is odd  in $\xi_l,$ i.e.,
$
N(S_l\xi) = - N(\xi), \  \xi \in \overline{Y}.
$
\end{lemma}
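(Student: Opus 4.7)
The plan is to prove both statements by a uniqueness argument: construct an auxiliary function built from $N$ by reflection, show it solves the same boundary value problem (\ref{cell4}) under the given hypotheses, and then invoke the uniqueness part of the preceding lemma to conclude that this auxiliary function coincides with $N$.

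More precisely, for the even case under (\ref{s1}), I would define $\widetilde{N}(\xi):= N(S_l\xi)$, and for the odd case under (\ref{s2}), $\widetilde{N}(\xi):= -N(S_l\xi)$. Since $S_l$ is an involution and the periodicity cell $Y$ together with $\partial G_0$ is invariant under $S_l$ (this is exactly the standing symmetry assumption on $Y$, combined with $1$-periodicity which identifies the reflection about $\xi_l = 1/2$ with $S_l$ modulo the period), the function $\widetilde{N}$ again lies in $H^1_{per}(Y)$, and its mean over $Y$ vanishes by a change of variables $\xi \mapsto S_l \xi$.

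The heart of the argument is then to verify that $\widetilde{N}$ satisfies exactly the same PDE and boundary condition as $N$. The Laplacian is invariant under orthogonal transformations, so $\Delta_\xi \widetilde{N}(\xi) = \pm (\Delta_\xi N)(S_l\xi)$. Using the chain rule, one checks that $(\partial_{\xi_i}F_i)(S_l\xi) = (-1)^{2\delta_{l,i}} \partial_{\xi_i} F_i(\xi) = \partial_{\xi_i} F_i(\xi)$ in the even case and $=-\partial_{\xi_i}F_i(\xi)$ in the odd case, while $F_0(S_l\xi) = \pm F_0(\xi)$ directly from the hypothesis. Combining these, the right-hand side of (\ref{cell4}) is reproduced with the correct sign. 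For the Neumann condition, one uses the fact that the outward normal transforms as $\nu_j(S_l\xi) = (-1)^{\delta_{l,j}} \nu_j(\xi)$ (because $S_l(\partial G_0) = \partial G_0$), so that $(\partial_{\vec{\nu}_\xi}N)(S_l\xi) = \sum_j (\partial_{\xi_j}N)(S_l\xi)\, (-1)^{\delta_{l,j}} \nu_j(\xi)$, which matches $\nabla_\xi\widetilde{N}(\xi)\cdot\vec{\nu}(\xi)$. Comparing with $\sum_i F_i(S_l\xi)\nu_i(S_l\xi)$ and using (\ref{s1}) or (\ref{s2}), the factors $(-1)^{\delta_{l,i}}$ combine pairwise to $\pm 1$, giving exactly $\sum_i F_i(\xi)\nu_i(\xi)$.

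Once both sides are verified, the uniqueness statement of the preceding lemma forces $\widetilde{N} \equiv N$, which is precisely the claimed evenness or oddness. The main bookkeeping hazard, and the step I expect will require the most care, is correctly tracking the sign factors: one must keep separate the sign coming from the symmetry of $F_i$, the sign from differentiating $F_i(S_l\xi)$ (which contributes $(-1)^{\delta_{l,j}}$ for the $\xi_j$ derivative), and the sign from the transformation of the outward normal. It is the consistent cancellation of these factors, enabled by the symmetry of $Y$ and of $\partial G_0$ under $S_l$, that makes the invariance of both the PDE and the boundary condition go through.
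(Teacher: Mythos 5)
Your proof is correct and takes the canonical route: define $\widetilde N = \pm\, N\circ S_l$, verify via the chain rule and the $S_l$-invariance of $Y$, $\partial G_0$, and the normal that $\widetilde N$ solves the same problem \eqref{cell4}, and conclude by uniqueness. The paper itself does not present a proof of Lemma~\ref{Lemma-3-2} but defers to Lemma~2.2 of \cite{Melnyk-1995}, for which this uniqueness-plus-reflection argument is precisely the expected method.
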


\begin{remark}\label{remark-3-2}
In all symmetry relations we must write $\xi \in \bigcup\nolimits_{k,n \, \in \, \Bbb Z} \big( \overline{Y} +  (k, n)\big),$ but to shorten the writing
we indicate that $\xi \in  \overline{Y}.$

  For a 1-periodic function, the evenness (oddness) in some variable means the evenness (oddness) with respect to $\frac{1}{2},$ e.g.,
\ $F(\xi_1) = F(-\xi_1) = F(1 -\xi_1).$

 Due to the symmetry of the periodicity cell $Y$ the components of the normal satisfy \
 \begin{equation}\label{sym-normal}
  \nu_i(S_l\xi) = (-1)^{\delta_{l, i}} \nu_i(\xi), \quad  \xi \in \bigcup\nolimits_{k,n \, \in \, \Bbb Z} \big( \partial G_0 +  (k, n)\big),  \  \ i\in \{1, 2\}, \ \ l\in\{1, 2\}.
 \end{equation}
  \end{remark}

Let us apply Lemma~\ref{Lemma-3-2} to the recurrent procedure \eqref{cell1} - \eqref{cell3}. In virtue of   \eqref{sym-normal},
\begin{equation}\label{sym1}
  N_{\alpha}(S_l \xi) = (-1)^{\delta_{\alpha,l}} N_\alpha(\xi),\quad \xi \in \overline{Y}, \quad |\alpha|=1, \quad l\in \{1, 2\},
\end{equation}
i.e., $N_1$ is odd in $\xi_1$ and even in $\xi_2,$ and  $N_2$ is even in $\xi_1$ and odd  in $\xi_2.$

For $|\alpha|= 2$ we have problems
\begin{equation}\label{cell5}
\left\{
\begin{array}{rcl}
\Delta_{\xi}N_{\alpha_1 \alpha_2}(\xi)  &=& h_{\alpha_1 \alpha_2} -  \delta_{\alpha_1, \alpha_2}  - 2 \partial_{\xi_{\alpha_1}}N_{\alpha_2}(\xi) \ \  \text{in} \ \ Y,
\\[2pt]
\partial_{\vec{\nu}_\xi}N_{\alpha_1 \alpha_2}(\xi) &=& - \nu_{\alpha_1}\!(\xi)\,
N_{\alpha_2}(\xi)  \ \  \text{on} \ \ S_0,
\\[2pt]
\langle N_{\alpha_1 \alpha_2} \rangle_Y &=& 0.
\end{array}
\right.
\end{equation}
Let's clarify the symmetry of the right side.  On one side, based on $N_{\alpha_2}(S_l \xi) = (-1)^{\delta_{\alpha_2,l}} N_{\alpha_2}(\xi),$  we have
$$
\partial_{\xi_{\alpha_1}}\Big(N_{\alpha_2}(S_l\xi)\Big) = (-1)^{\delta_{\alpha_2,l}} \partial_{\xi_{\alpha_1}}N_{\alpha_2}(\xi).
$$
On the other, using the chain rule,  \ $\displaystyle \partial_{\xi_{\alpha_1}}\Big(N_{\alpha_2}(S_l\xi)\Big) = \partial_{\eta_{\alpha_1}}N_{\alpha_2}(\eta)\big|_{\eta= S_l\xi}\,  (-1)^{\delta_{\alpha_1,l}}.$
Thus,
$$
\partial_{\eta_{\alpha_1}}N_{\alpha_2}(\eta)\big|_{\eta= S_l\xi} =   (-1)^{\delta_{\alpha_1,l}\, + \, \delta_{\alpha_2,l}} \ \partial_{\xi_{\alpha_1}}N_{\alpha_2}(\xi).
$$
This means that if $\alpha_1 \neq \alpha_2,$  then  $h_{\alpha_1 \alpha_2} = \langle \partial_{\xi_{\alpha_1}}N_{\alpha_2} \rangle_Y = 0,$
and by Lemma~\ref{Lemma-3-2}  $N_{12}$ and $N_{21}$
are odd  in $\xi_1$ and $\xi_2.$
If $\alpha_1 = \alpha_2,$ then   $h_{\alpha_1 \alpha_1} = \langle 1+ \partial_{\xi_{\alpha_1}}N_{\alpha_1} \rangle_Y$  and  by Lemma~\ref{Lemma-3-2} $N_{\alpha_1 \alpha_1}$ is even in $\xi_1$ and $\xi_2.$

Summarising, we get
$$
N_{\alpha_1 \alpha_2}(S_l \xi) = (-1)^{\delta_{\alpha_1,l}\, + \, \delta_{\alpha_2,l}} \ N_{\alpha_1 \alpha_2}(\xi) ,\quad \xi \in \overline{Y}, \quad l\in \{1, 2\}.
$$

By using the method of mathematical induction, we can prove the lemma.
\begin{lemma}\label{Lemma-3-3}
  For any  $|\alpha| = k \ge 1,$ the  solution $N_\alpha$ to  problem \eqref{cell3}
satisfies
$$
   N_{\alpha}(S_l \xi) = (-1)^{\delta_{\alpha_1,l}\, + \, \delta_{\alpha_2,l} \, + \ldots +\, \delta_{\alpha_k,l}} \ N_{\alpha}(\xi) ,\quad \xi \in \overline{Y}, \quad l\in \{1, 2\}.
$$

In addition,
\begin{gather}
h_\alpha =0 \ \ \text{if} \ \ |\alpha| \ \ \text{is odd}, \label{h-1}
\\[2pt]
h_\alpha =0 \ \ \text{if} \ \   (-1)^{\delta_{\alpha_1,l}\, + \, \delta_{\alpha_2,l} \, + \ldots +\, \delta_{\alpha_k,l}} = -1 \ \
\text{at least at one} \ \ l\in \{1, 2\}. \label{h-2}
\end{gather}
 \end{lemma}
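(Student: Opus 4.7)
The proof will proceed by induction on the length $k=|\alpha|$. The base case $k=1$ is already settled in \eqref{sym1}, so I would focus on the inductive step.

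Assume the claimed symmetry holds for all multi-indices of length $\leq k-1$. Fix $\alpha$ with $|\alpha|=k$ and $l\in\{1,2\}$, and set $\sigma_l(\alpha):= \delta_{\alpha_1,l}+\cdots+\delta_{\alpha_k,l}$. To apply Lemma~\ref{Lemma-3-2}, I would first rewrite problem \eqref{cell3} in the form \eqref{cell4} by setting $F_{\alpha_1}=-N_{\alpha_2\ldots\alpha_k}$, $F_j=0$ for $j\neq\alpha_1$, and $F_0= h_\alpha - \delta_{\alpha_1,\alpha_2}N_{\alpha_3\ldots\alpha_k} - \partial_{\xi_{\alpha_1}}N_{\alpha_2\ldots\alpha_k}$. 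The inductive hypothesis then yields
$$N_{\alpha_2\ldots\alpha_k}(S_l\xi)=(-1)^{\sigma_l(\alpha)-\delta_{\alpha_1,l}}\,N_{\alpha_2\ldots\alpha_k}(\xi),$$
and, when $\alpha_1=\alpha_2$, an analogous identity for $N_{\alpha_3\ldots\alpha_k}$ with exponent $\sigma_l(\alpha)-2\delta_{\alpha_1,l}$, of the same parity as $\sigma_l(\alpha)$. Using the chain rule $\partial_{\xi_{\alpha_1}}(g\circ S_l)=(-1)^{\delta_{\alpha_1,l}}(\partial_{\xi_{\alpha_1}}g)\circ S_l$, I would deduce that $\partial_{\xi_{\alpha_1}}N_{\alpha_2\ldots\alpha_k}$ transforms under $S_l$ with sign $(-1)^{\sigma_l(\alpha)}$.

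Now split into two cases. If $\sigma_l(\alpha)$ is even, then $F_{\alpha_1}$ is transformed with sign $(-1)^{\delta_{l,\alpha_1}}$ (matching \eqref{s1}), and both non-constant pieces of $F_0$ are transformed trivially; the constant $h_\alpha$ is automatically invariant, so hypothesis \eqref{s1} holds and Lemma~\ref{Lemma-3-2} yields $N_\alpha(S_l\xi)=N_\alpha(\xi)=(-1)^{\sigma_l(\alpha)}N_\alpha(\xi)$. If $\sigma_l(\alpha)$ is odd, then $F_{\alpha_1}$ transforms with the sign required by \eqref{s2}, and the two non-constant pieces of $F_0$ each change sign; but the constant $h_\alpha$ cannot change sign unless it vanishes. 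The solvability formula \eqref{t1} rescues us here: since $\delta_{\alpha_1,\alpha_2}N_{\alpha_3\ldots\alpha_k}$ and $\partial_{\xi_{\alpha_1}}N_{\alpha_2\ldots\alpha_k}$ are both odd in $\xi_l$ (by the computation above), their averages over the symmetric cell $Y$ vanish, whence $h_\alpha=0$. Then $F_0$ satisfies \eqref{s2} and Lemma~\ref{Lemma-3-2} gives $N_\alpha(S_l\xi)=-N_\alpha(\xi)=(-1)^{\sigma_l(\alpha)}N_\alpha(\xi)$. This establishes the symmetry identity and simultaneously proves \eqref{h-2}.

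Finally, for \eqref{h-1}, I would observe that for each $j$ the index $\alpha_j$ lies in $\{1,2\}$, so $\delta_{\alpha_j,1}+\delta_{\alpha_j,2}=1$, which forces $\sigma_1(\alpha)+\sigma_2(\alpha)=k$. When $k$ is odd, at least one of $\sigma_1(\alpha),\sigma_2(\alpha)$ must be odd, and \eqref{h-2} immediately yields $h_\alpha=0$. The only genuinely delicate step is the sign-bookkeeping with the chain rule; everything else follows mechanically from the two statements of Lemma~\ref{Lemma-3-2} once the right-hand side is put into the canonical form of problem \eqref{cell4}.
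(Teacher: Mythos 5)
Your proof is correct and takes essentially the same approach the paper sketches (the paper works out the $k=2$ case explicitly and then says ``by induction''); you have simply written out the inductive step in full, correctly identifying $F_{\alpha_1}=-N_{\alpha_2\ldots\alpha_k}$, $F_0=h_\alpha-\delta_{\alpha_1,\alpha_2}N_{\alpha_3\ldots\alpha_k}-\partial_{\xi_{\alpha_1}}N_{\alpha_2\ldots\alpha_k}$, tracking the sign under $S_l$ via the chain rule, and using \eqref{t1} together with the $S_l$-symmetry of $Y$ to kill $h_\alpha$ when $\sigma_l(\alpha)$ is odd so that \eqref{s2} applies. The observation that $\sigma_1(\alpha)+\sigma_2(\alpha)=k$ to deduce \eqref{h-1} from \eqref{h-2} is also exactly the right (and implicit) argument.
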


Thus,   the  coefficients $\{N_\alpha\}$ in series \eqref{as-U} are  determined, and
\begin{equation}\label{hom-eq1}
\Delta_x U^\infty_\varepsilon = \sum_{k=2}^\infty\varepsilon^{k-2}\sum_{|\alpha|=k} h_\alpha \, D^\alpha v_\varepsilon(x)
 \approx f^+(x), \quad x \in \Omega^+_\varepsilon.
 \end{equation} {}
The function $v_\varepsilon$  is sought in the form
\begin{equation}\label{as-v+}
  v_\varepsilon(x) := \sum_{n=0}^{\infty}\varepsilon^n v^+_n(x), \quad x \in \Omega^+.
\end{equation}

Substituting this series in \eqref{hom-eq1} and equating coefficients at the same power of $\varepsilon,$ we get
$$
\sum_{k=0}^\infty \varepsilon^{k}\Big(\sum_{|\alpha|=2} h_\alpha \, D^\alpha v^+_k(x) +  \sum_{n=0}^{k-1} \sum_{|\alpha|=k-n+2} h_\alpha \, D^\alpha v^+_n(x)\Big) \approx f^+(x), \quad x \in \Omega^+_\varepsilon.
$$
 To satisfy this relation, it is necessary to equate the terms with the same degree $\varepsilon$ from the left and right sides of this equality. As a result, we obtain a recurrent sequence of differential equations for the coefficients $\{v_k^+\}.$ At  $\varepsilon^0$ we get
$$
   \sum_{|\alpha|=2} h_\alpha \, D^\alpha v^+_0(x) = f^+(x)
\
\stackrel{\eqref{h-2}}{\Longleftrightarrow}
 \ {h_{11}} \, \partial^2_{x_1}v^+_0(x) + {h_{22}}\,  \partial^2_{x_2}v^+_0(x) = f^+(x)
 \ {\Longleftrightarrow} \ {\widehat{\mathcal{H}}}\, v^+_0 = f^+,
 $$
where \ \ ${\widehat{\mathcal{H}}} := {h_{11}} \, \partial^2_{x_1} \  + \  {h_{22}}\,  \partial^2_{x_2};$ \  at  $\varepsilon^1:$
$$
{\widehat{\mathcal{H}}}\,v^+_1(x) +  \sum_{|\alpha|= 3} h_\alpha \, D^\alpha v^+_0(x) = 0
\ \ \stackrel{\eqref{h-1}}{\Longleftrightarrow} \ \ {\widehat{\mathcal{H}}}\,v^+_1(x)  = 0;
$$
at $\varepsilon^k:$ \
$
{\widehat{\mathcal{H}}}\,v^+_k(x) = f^+_k(x),
$
 where
\begin{equation}\label{f_k}
   f^+_k(x) = -   \sum_{n=0}^{k-1} \sum_{|\alpha|=k-n+2} h_\alpha \, D^\alpha v^+_n(x).
\end{equation}

Similarly, as in, e.g.,  \cite[Chapt.4, \S 1]{Bach-Pan-1989} or in \cite[\S 6.3]{Cio-Don-1999}, we show that the differential operator $\widehat{\mathcal{H}}$ is elliptic, in particular
\begin{gather}\label{ell1}
  {h_{11}} = \big\langle 1 + \partial_{\xi_1}N_1\big\rangle_Y  =
\Big\langle\, \big|\partial_{\xi_1}(\xi_1 + N_1)\big|^2 + \big|\partial_{\xi_2} N_1)\big|^2 \Big\rangle_Y > 0,
  \\ \label{ell2}
  {h_{22}} = \big\langle 1 + \partial_{\xi_2}N_2\big\rangle_Y= \Big\langle\,  \big|\partial_{\xi_1}N_1\big|^2 + \big|\partial_{\xi_2}(\xi_2 +  N_1)\big|^2 \Big\rangle_Y > 0.
\end{gather}

Of course,  these  differential equations must be supplemented by the Dirichlet boundary conditions  on $\partial\Omega^+\setminus \mathcal{Z}.$ Thus, we get
\begin{equation}\label{v+0}
\left\{
\begin{array}{rcll}
{\widehat{\mathcal{H}}}\,v^+_0(x) &=& f^+(x), &\ \ x \in \Omega^+,
\\[4pt]
v^+_0(x) &=& 0, & \ \  x \in \Gamma^+ := \partial\Omega^+ \setminus {\mathcal{Z}},
\end{array}
\right.
\end{equation}
\begin{equation}\label{v+k}
\left\{
\begin{array}{rcll}
{\widehat{\mathcal{H}}}v^+_k(x) &=& f^+_k(x),  &  x \in \Omega^+,
\\[4pt]
v^+_k(x) &=& 0, & x \in \Gamma^+,
\end{array}
\right. \quad \text{for} \ \ k\in \Bbb N.
\end{equation}

Suppose we find $\{v_k^+\}.$ Let us find out additional properties of these functions. We start with $v_0^+.$ Since
$v^+_0(x_1,0) = 0$ for all $x_1 \in (0, d),$
\begin{equation}\label{v1}
  \partial^p_{x_1}v^+_0(x_1,0) = 0 \quad \text{on} \ \ \in (0, d) \quad \text{for all} \ \   p\in \Bbb N.
\end{equation}
Because of $f^+\in C^\infty_0(\Omega^+),$ there is a positive $\delta> 0$ such that
\begin{equation}\label{v2}
{h_{11}} \, \partial^2_{x_1 x_1}v^+_0(x_1,x_2) \,  + \, {h_{22}}\,  \partial^2_{x_2 x_2}v^+_0(x_1,x_2) =0 \quad \text{in} \ \ (0, d)\times(0, \delta).
\end{equation}
Passing to the limit in \eqref{v2} as $x_2 \to 0$ and considering \eqref{v1}, we get that
$ \partial^2_{x_2 x_2}v^+_0(x_1,0) =0$ on $(0, d),$
from which it follows that for all $p\in \Bbb N$
 \begin{equation}\label{v3}
   \partial^{p}_{x_1}\big(\partial^2_{x_2}v^+_0(x_1,0)\big), \quad x_1 \in (0, d).
\end{equation}
Differentiating the equation \eqref{v2} twice with respect to $x_2$, we obtain
\begin{equation}\label{v4}
{h_{11}} \, \partial^2_{x_1}\big(\partial^2_{x_2}v^+_0(x_1,x_2)\big)   +
{h_{22}}\,  \partial^4_{x_2}v^+_0(x_1,x_2) =0  \quad \text{in} \ \ (0, d)\times(0, \delta).
\end{equation}
Now passing to the limit in \eqref{v4} as $x_2 \to 0$ and taking \eqref{v3} into account, we derive
$$
\partial^4_{x_2}v^+_0(x_1,0) =0, \quad x_1 \in (0, d).
$$
Repeating these arguments, we conclude that $ \partial^{2p}_{x_2}v^+_0(x_1,0) = 0 $ on $(0, d)$ for all $ p\in \Bbb N.$

In a similar way we show that for \ $\forall \, p \in \Bbb N$
\begin{itemize}
  \item $\partial^p_{x_1}v^+_0(x_1,d) = 0$ \ and \   $\partial^{2p}_{x_2}v^+_0(x_1,d) = 0$ \ \ for \ \ $x_1\in(0, d);$
  \smallskip
  \item  $\partial^p_{x_2}v^+_0(d, x_2) = 0$ \ and \   $\partial^{2p}_{x_1}v^+_0(d, x_2) = 0$ \ \ for \ \ $x_2\in(0, d).$
\end{itemize}

Then, using the method of mathematical induction, we derive similar properties for derivatives of the remaining coefficients.
\begin{proposition}\label{Prop-3-1} The following relations hold:
 \begin{itemize}
    \item $\forall  p \in \Bbb N  \ \ \forall k \in \Bbb N_0\colon \quad \partial^p_{x_1}v^+_k(x_1,0) = 0, \ \ \partial^p_{x_1}v^+_k(x_1,d) = 0, \ \
\partial^p_{x_2}v^+_k(d, x_2) = 0;$
\smallskip
    \item $ \forall  p \in \Bbb N  \ \ \forall k \in \Bbb N_0\colon \quad  \partial^{2p}_{x_2}v^+_k(x_1,0) = 0, \ \ \partial^{2p}_{x_2}v^+_k(x_1,d) = 0, \ \
\partial^{2p}_{x_1}v^+_k(d, x_2) = 0. $
  \end{itemize}
\end{proposition}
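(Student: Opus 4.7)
The proof is by double induction: an outer induction on the coefficient index $k$, and, at each fixed $k$, an inner induction on $p$ that reads the successive even-order normal derivatives off the elliptic equation $\widehat{\mathcal{H}}\, v_k^+ = f_k^+$. The base case $k=0$ has essentially been carried out in the discussion preceding the statement, so the task reduces to the outer inductive step.

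The first bullet is a purely boundary identity. Since the Dirichlet data in \eqref{v+0}--\eqref{v+k} force $v_k^+ \equiv 0$ on $\Gamma^+$, differentiating this identity tangentially (in $x_1$ along $\{x_2=0\}$ and $\{x_2=d\}$, in $x_2$ along $\{x_1=d\}$) yields all three relations of the first bullet, with no use of the inductive hypothesis. For the second bullet, assume the proposition already holds for $v_0^+,\ldots,v_{k-1}^+$. Because $f^+\in C_0^\infty(\Omega^+)$, there exists $\delta>0$ such that near $\{x_2=0\}$ the equation reduces to
\begin{equation*}
h_{11}\,\partial_{x_1}^2 v_k^+(x) + h_{22}\,\partial_{x_2}^2 v_k^+(x) = f_k^+(x),\qquad x\in(0,d)\times(0,\delta).
\end{equation*}
Lemma~\ref{Lemma-3-3} is the decisive structural input: $h_\alpha\ne 0$ only when $\alpha$ contains an even number of $1$'s and an even number of $2$'s. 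Hence every term appearing in $f_k^+$ via \eqref{f_k} has the form $h_\alpha\,\partial_{x_1}^{2a}\partial_{x_2}^{2b} v_n^+$ with $n<k$ and $a+b\ge 1$; by the outer inductive hypothesis (the second bullet if $b\ge 1$, the Dirichlet condition if $b=0$) each such term vanishes on $\{x_2=0\}$, and applying any even power of $\partial_{x_2}$ preserves this, because it merely increases $2b$ while keeping $\alpha$ inside the class covered by the hypothesis. Therefore $\partial_{x_2}^{2q} f_k^+(x_1,0)=0$ for every $q\ge 0$.

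Running the inner induction on $p$: evaluating the equation at $x_2=0$ and invoking the first bullet (already proved for $v_k^+$) gives $\partial_{x_2}^2 v_k^+(x_1,0)=0$; applying $\partial_{x_2}^{2(p-1)}$ to the equation, passing to the limit $x_2\to 0$, and combining the inner inductive hypothesis on $\partial_{x_2}^{2(p-1)} v_k^+(x_1,0)$ with the just-established vanishing of $\partial_{x_2}^{2(p-1)} f_k^+(x_1,0)$ yields $\partial_{x_2}^{2p} v_k^+(x_1,0)=0$. The arguments on $\{x_2=d\}$ and $\{x_1=d\}$ are identical after relabelling variables or using the corresponding boundary strips $(0,d)\times(d-\delta,d)$ and $(d-\delta,d)\times(0,d)$. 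The main obstacle is the bookkeeping of the parity conditions in Lemma~\ref{Lemma-3-3}: this is precisely where the symmetry of the periodicity cell $Y$ enters, preventing any ``odd'' term in $f_k^+$ from spoiling the propagation of vanishing from tangential to even-order normal derivatives.
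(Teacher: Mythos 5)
Your proposal is correct and follows essentially the same approach as the paper: establish the first bullet by tangential differentiation of the Dirichlet data, then bootstrap the second bullet from the elliptic equation $\widehat{\mathcal{H}}\,v_k^+ = f_k^+$ near the boundary. The paper carries out this argument explicitly only for $k=0$ and defers the rest to ``mathematical induction''; you correctly identify that the inductive step for $k\ge 1$ hinges on the parity constraints of Lemma~\ref{Lemma-3-3}, which ensure that every term of $f_k^+$ in \eqref{f_k} is of the form $h_\alpha\,\partial_{x_1}^{2a}\partial_{x_2}^{2b}v_n^+$ with $n<k$ and hence has vanishing even-order $x_2$-derivatives on $\{x_2=0\}$ by the outer inductive hypothesis.
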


These properties are the basis for our next statement.

\begin{proposition}\label{Prop-3-2}
The series \eqref{as-U} vanishes at $\Gamma^+ := \partial\Omega^+ \setminus {\mathcal{Z}}.$
\end{proposition}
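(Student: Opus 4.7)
The plan is to decompose $\Gamma^+$ into its three smooth sides --- $\{x_2=0\}$, $\{x_2=d\}$, and $\{x_1=d\}$ --- and verify the vanishing of $U_\varepsilon^\infty$ on each piece separately, the argument on the bottom side $\{x_2=0\}$ being representative. On this side $\xi_2 = x_2/\varepsilon = 0$, which, since $\varepsilon = d/N$, lies on the periodic boundary of the cell, so the symmetry conclusions of Lemma~\ref{Lemma-3-3} apply there directly.

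The central observation is that for every multi-index $\alpha$, either the cell function $N_\alpha$ vanishes at $\xi_2 = 0$ or the derivative $D^\alpha v_\varepsilon$ vanishes at $x_2 = 0$, according to the parity of the integer $q_\alpha := \#\{i\colon \alpha_i = 2\}$. Indeed, by Lemma~\ref{Lemma-3-3} one has $N_\alpha(\xi_1,-\xi_2) = (-1)^{q_\alpha} N_\alpha(\xi_1,\xi_2)$, and so if $q_\alpha$ is odd, then $N_\alpha(\xi_1,0) = 0$ and that term drops out of the series. If $q_\alpha$ is even, write $D^\alpha = \partial^{p_\alpha}_{x_1}\partial^{q_\alpha}_{x_2}$ (legitimate by smoothness of the $v^+_n$), with $p_\alpha = |\alpha|-q_\alpha$. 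For $q_\alpha = 0$ and $p_\alpha = 0$ the only term is $v_\varepsilon(x_1,0) = \sum_n \varepsilon^n v^+_n(x_1,0)$, which vanishes by the Dirichlet conditions in \eqref{v+0}--\eqref{v+k}. For $q_\alpha = 0$ and $p_\alpha \ge 1$, Proposition~\ref{Prop-3-1} gives $\partial^{p_\alpha}_{x_1}v^+_n(x_1,0) = 0$. For $q_\alpha$ a positive even integer, Proposition~\ref{Prop-3-1} yields $\partial^{q_\alpha}_{x_2}v^+_n(x_1,0) = 0$ identically in $x_1$, so any further $x_1$-differentiation preserves the zero. In each sub-case the product $N_\alpha(x/\varepsilon)\,D^\alpha v_\varepsilon(x)$ vanishes at $x_2 = 0$, and term-by-term summation of the (formal) series yields $U_\varepsilon^\infty|_{x_2=0} = 0$.

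The sides $\{x_2 = d\}$ and $\{x_1 = d\}$ are treated identically. On $\{x_2=d\}$ one has $x_2 = N\varepsilon$, hence $\xi_2 \equiv 0 \pmod 1$, so the same $\xi_2$-parity of $N_\alpha$ applies, and the analogous derivative-vanishing statements at $x_2 = d$ provided by Proposition~\ref{Prop-3-1} close the argument. On $\{x_1=d\}$ the roles of $\xi_1$ and $\xi_2$ are interchanged: one invokes the $\xi_1$-parity $(-1)^{p_\alpha}$ from Lemma~\ref{Lemma-3-3} together with the corresponding vanishing of $\partial^{p_\alpha}_{x_1}v^+_n(d,x_2)$ and $\partial^{q_\alpha}_{x_2}v^+_n(d,x_2)$ from Proposition~\ref{Prop-3-1}.

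No real obstacle is expected; once Lemma~\ref{Lemma-3-3} and Proposition~\ref{Prop-3-1} are in hand the argument is a bookkeeping case analysis split by the parities of $p_\alpha$ and $q_\alpha$. The only mild subtlety worth emphasising is that the three sides of $\Gamma^+$ all project onto the symmetry lines $\xi_1 \equiv 0$ or $\xi_2 \equiv 0 \pmod 1$ of the periodicity cell --- precisely the lines on which the parity identities of Lemma~\ref{Lemma-3-3} force the odd-parity $N_\alpha$ to vanish, which is exactly the fact that complements the even-order derivative vanishing guaranteed by Proposition~\ref{Prop-3-1}.
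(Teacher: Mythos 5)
Your argument is correct and is essentially the paper's own proof, elaborated: the paper likewise splits by the parity of $\#\{i:\alpha_i=2\}$ on the side $\{x_2=0\}$, using Lemma~\ref{Lemma-3-3} when that count is odd (to get $N_\alpha|_{\xi_2=0}=0$) and Proposition~\ref{Prop-3-1} when it is even (to get $D^\alpha v^+_n|_{x_2=0}=0$), and then treats the other two sides of $\Gamma^+$ as ``similar.'' Your write-up only spells out the sub-cases and the periodicity remark $x_2=N\varepsilon\Rightarrow\xi_2\equiv 0\pmod 1$ more explicitly, but adds no new ideas.
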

\begin{proof}
 Let us show how to prove this for $x_2 =0.$ Consider its restriction
 $$
 U^\infty_\varepsilon\Big|_{x_2=0}  =
\sum_{k=0}^\infty\varepsilon^k \sum_{n=0}^\infty\varepsilon^n \sum_{|\alpha|=k} N_\alpha(\tfrac{x}{\varepsilon}) \,  D^\alpha v_n(x)\Big|_{x_2=0}
$$
If $k=0,$ then $v_n(x_1, 0) =0.$ Take any $k \in \Bbb N$ and show that $\displaystyle N_\alpha(\tfrac{x}{\varepsilon}) \,  D^\alpha v_n(x)\big|_{x_2=0} = 0.$

If the number of components of  $\alpha$ that equals  $2$ is odd,  then by Lemma~\ref{Lemma-3-3}
$$
N_{\alpha}(\xi_1, -\xi_2) = (-1)^{\delta_{\alpha_1,2} +  \delta_{\alpha_2, 2}  +\ldots+  \delta_{\alpha_k,2}} \, N_{\alpha}(\xi_1,\xi_2)  = -  N_{\alpha}(\xi_1,\xi_2)  \ \ {\Longrightarrow} \ \ N_{\alpha}\big|_{x_2=0} =0.
$$
If this number is even, then by Proposition~\ref{Prop-3-1} we have
$\displaystyle D^\alpha v^+_n(x)\big|_{x_2 =0} = 0. $
\end{proof}

Thus,  the series $U^\infty_\varepsilon$ formally satisfies the following relations:
$$
\Delta_{ x}U^\infty_\varepsilon =f^+  \quad  \text{in} \  \ \Omega^+_\varepsilon, \qquad
\partial_{\vec{\nu}_\varepsilon}U^\infty_\varepsilon = 0 \quad \text{on} \ \ \partial G_{\varepsilon},
\qquad
U^\infty_\varepsilon = 0 \quad \text{on} \ \ \Gamma^+ .
$$

\subsection{Inner-layer asymptotics}

To obtain transmission conditions for the coefficients $\{v_k^-\}$ and $\{v_k^+\}$ of series \eqref{as-V} and  \eqref{as-v+} respectively,
 we then run the inner-layer asymptotics in a vicinity of the interface interval $\mathcal{Z}$  between the perforated and non-perforated parts of the domain $\Omega_\varepsilon.$
The inner-layer ansatz is sought in the form of two series
\begin{equation}\label{as-B}
  B^\infty_\varepsilon  := \left\{
                          \begin{array}{ll}
\displaystyle
  \sum_{k=1}^\infty\varepsilon^k\sum_{|\alpha|=k} B^+_\alpha(\xi) \, D^\alpha v^+_\varepsilon(x)|_{x_1=0},
& \xi = \tfrac{x}{\varepsilon}, \ \ x \in \Omega^+_\varepsilon,
\\
\displaystyle
  \sum_{k=1}^\infty\varepsilon^k\sum_{|\alpha|=k} B^-_\alpha(\xi) \, D^\alpha v^+_\varepsilon(x)|_{x_1=0},
 & \xi = \tfrac{x}{\varepsilon}, \ \ x \in \Omega^-,
                          \end{array}
                        \right.
\end{equation}
where \ $v_\varepsilon^+(x) = \sum_{n=0}^{\infty}\varepsilon^n v^+_n(x),$ \ $B^\pm_0 \equiv 0,$ \
$ B_\alpha(\xi) := \left\{
                     \begin{array}{ll}
                       B^+_\alpha(\xi), & \xi \in \Upsilon^+, \\
                       B^-_\alpha(\xi), & \xi \in \Upsilon^-,
                     \end{array}
                   \right.
$
is $1$-periodic in $\xi_2,$ and it is also required that  $B^\pm_\alpha(\xi) \to 0$
as $\xi_1 \to \pm\infty.$
\begin{figure}[htbp]\label{F}
  \centering
  \includegraphics[width=10cm]{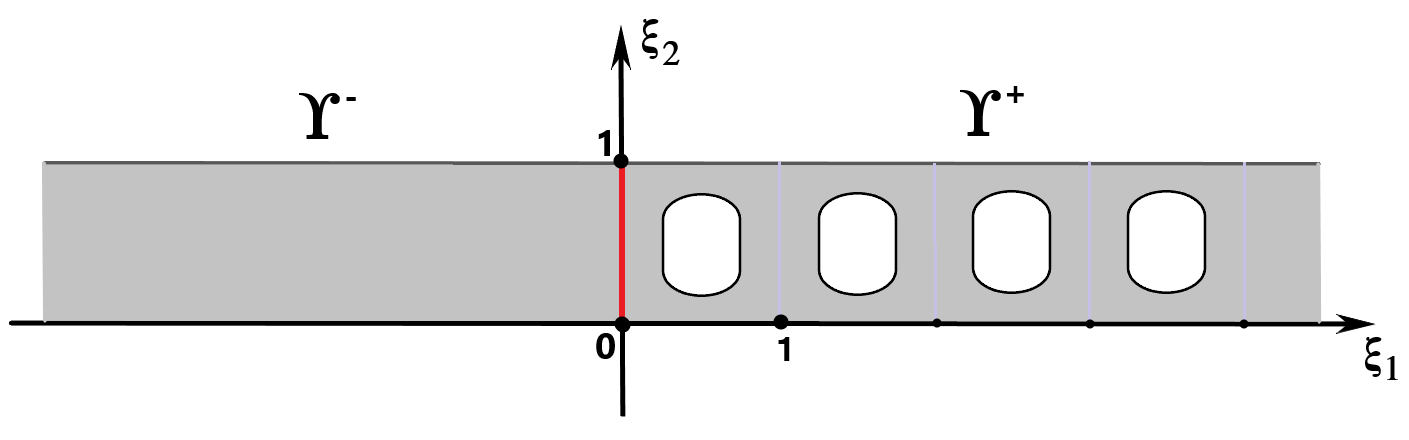}
  \caption{The partial perforated band-cell}\label{f3}
   \end{figure}

Here,  $\Upsilon^- := \{\xi\colon \ \xi_1 < 0, \ 0<\xi_2<1\}$  and $\Upsilon^{+,0} := \{\xi\colon \ 0 < \xi_1, \ 0<\xi_2<1\}$ are  infinite semi-strips  and
$$
\Upsilon^+ := \Upsilon^{+,0} \bigcap \Big(\bigcup_{k \in \Bbb N_0} \big( Y + (k, 0)\big)\Big)
$$
is $1$-periodic  perforated semi-strip  in the direction $\xi_1$ (see Fig.~\ref{f3}).

Since formally $\Delta_{ x}U^\infty_\varepsilon = f^+$ in  $\Omega^+_\varepsilon$  and  $\Delta_{ x}V^\infty_\varepsilon = f^-$  in   $\Omega^-,$ the Laplace operator of the series~$B^\infty_\varepsilon$ must be equal to zero.
 Using the same technique as before, we get the following equation
 \begin{align*}
  \Delta_{x}B^\infty_\varepsilon & = \bigg[\varepsilon^{-1}\sum_{|\alpha|=1}\Delta_{\xi}B^\pm_\alpha(\xi) \, D^\alpha v^+_\varepsilon(x)\big|_{x_1=0}
\\
&\ \ \ +
\sum_{k=2}^\infty\varepsilon^{k-2}\sum_{|\alpha|=k}
\Big(\Delta_{\xi}B^\pm_\alpha(\xi)  + 2 \delta_{\alpha_1, 2} \, \partial_{\xi_2}B^\pm_{\alpha_2\ldots\alpha_k}(\xi)
 +
\delta_{\alpha_1, 2} \, \delta_{\alpha_2, 2}\,  B^\pm_{\alpha_3\ldots\alpha_k}(\xi) \Big)  D^\alpha v^+_\varepsilon(x)\big|_{x_1=0}\bigg]\bigg|_{\xi =\frac{x}{\varepsilon}} \approx 0.
\end{align*}
Substituting $B^\infty_\varepsilon$ in the Neumann conditions, we obtain
\begin{align*}
\partial_{\vec{\nu}_\varepsilon}U^\infty_\varepsilon & = \bigg[ \sum_{|\alpha|=1} \partial_{\vec{\nu}_\xi} B^+_\alpha(\xi)\, D^\alpha v^+_\varepsilon(x)\big|_{x_1=0}
\\
& \ \ \ + \sum_{k=2}^\infty\varepsilon^{k-1}\sum_{|\alpha|=k}\Big(\partial_{\vec{\nu}_\xi}B^+_\alpha(\xi) + \delta_{\alpha_1, 2}\, \nu_{2}(\xi)\,
B^+_{\alpha_2\ldots\alpha_k}(\xi)\Big) D^\alpha v^+_\varepsilon(x)\big|_{x_1=0}\bigg]\bigg|_{\xi =\frac{x}{\varepsilon}} \approx  0 \ \quad \text{on} \ \ \partial G_\varepsilon.
\end{align*}
To formally satisfy these equations, we need to equate the sums of the terms depending on the variables $\xi$  to zero.
As a result, we obtain the following relations for coefficients of the inner-layer asymptotics:\\
for $|\alpha|=1$
\begin{equation}\label{B-1}
  \Delta_{\xi}B^\pm_\alpha(\xi)  =0 \ \ \text{in} \ \ \Upsilon^\pm, \qquad \partial_{\vec{\nu}_\xi} B^+_\alpha(\xi)  =0 \ \ \text{on} \ \ \mathfrak{S}^+
\quad (\text{the boundaries of the holes}),
\end{equation}
and for $|\alpha|\ge 2$
\begin{equation}\label{B-alpha}
\left\{
\begin{array}{l}
\displaystyle
\Delta_{\xi}B^\pm_\alpha(\xi)  + 2 \delta_{\alpha_1, 2} \, \partial_{\xi_2}B^\pm_{\alpha_2\ldots\alpha_k}(\xi) +
\delta_{\alpha_1, 2} \, \delta_{\alpha_2, 2}\,  B^\pm_{\alpha_3\ldots\alpha_k}(\xi) =0  \ \ \text{in} \ \ \Upsilon^\pm,
\\[5pt]
\displaystyle
\partial_{\vec{\nu}_\xi}B^+_\alpha(\xi) + \delta_{\alpha_1, 2}\, \nu_{2}(\xi)\,
B^+_{\alpha_2\ldots\alpha_k}(\xi) = 0 \ \ \text{on} \ \ \mathfrak{S}^+.
\end{array}
\right.
\end{equation}

To find  relations  for $B^-_\alpha$ and $B^+_\alpha$ at the interval  ${\gamma} := \{\xi\colon \ \xi_1 =0, \ \ \xi_2 \in (0, 1)\},$
we substitute series \eqref{as-V}, \eqref{as-U} and \eqref{as-B} in the transmission conditions
$$
u_\varepsilon(x)|_{x_1=-0} = u_\varepsilon(x)|_{x_1=+0} \ \ \text{and} \ \ \partial_{x_1}u_\varepsilon(x)|_{x_1=-0} = \partial_{x_1}u_\varepsilon(x)|_{x_1=+0} \ \ \text{at} \ \ {\mathcal{Z}}.
$$
The result is
\begin{equation*}
  \big(V_\varepsilon^\infty + B^{\infty,-}_\varepsilon\big)|_{x_1=-0} =  \big(U^{\infty}_\varepsilon + B^{\infty,+}_\varepsilon\big)|_{x_1=+0} \  \ {\Longleftrightarrow} \ \ V_\varepsilon^\infty|_{x_1=0}  =  U^{\infty}_\varepsilon|_{x_1=0}  +  \big(B^{\infty,+}_\varepsilon|_{x_1=+0} \, - \,  B^{\infty,-}_\varepsilon|_{x_1=-0}\big),
\end{equation*}
and
\begin{equation}\label{tran2}
\partial_{x_1}V_\varepsilon^\infty|_{x_1=0}  = \partial_{x_1}U^{\infty}_\varepsilon|_{x_1=0} \, + \,
\big(\partial_{x_1}B^{\infty,+}_\varepsilon|_{x_1=+0} \, - \, \partial_{x_1}B^{\infty,-}_\varepsilon|_{x_1=-0}\big).
\end{equation}
The first aforementioned relation, in its expanded form, is represented as follows
\begin{align*}
  \sum_{k=0}^{\infty}\varepsilon^k \, v^-_k(0,x_2)= & \  v^+_0(0,x_2) + \sum_{k=1}^\infty\varepsilon^k \Big(v^+_k(0,x_2) +  \sum_{n=0}^{k-1} \sum_{|\alpha|=k -n} N_\alpha(0,\xi_2) \,  D^\alpha v^+_n(x)\big|_{x_1=0}\Big)
    \\
   & + \sum_{k=1}^\infty\varepsilon^k \sum_{n=0}^{k-1}\sum_{|\alpha|=k-n} \big[B_\alpha(\xi) \big]_{\xi_1=0} \, D^\alpha v^+_n(x)\big|_{x_1=0},
\end{align*}
where \ $\displaystyle  \boldsymbol{\big[}B_\alpha(\xi) \boldsymbol{\big]}_{\xi_1=0} := B^+_\alpha(0,\xi_2) - B^-_\alpha(0,\xi_2)$
is the jump of the enclosed quantity.

Next, we should equate terms with the same degree of $\varepsilon$ from the left and right sides of this equality.  At $\varepsilon^0$  there is no problem, we get the equality $v^-_0(0,x_2)   = v^+_0(0,x_2),$ $x_2 \in \mathcal{Z}.$
For the following terms, the micro-variable $\xi_2$ is on the right side and needs to be neutralized.
So if it is possible to select
\begin{equation}\label{tran3+}
\big[B_\alpha(\xi) \big]_{\xi_1=0} = - N_\alpha(0,\xi_2) + {q_\alpha},
\end{equation}
where   ${q_\alpha}$ \ is a constant,   then
\begin{equation}\label{tran3}
 v^-_k(0,x_2) = v^+_k(0,x_2) +  \sum_{n=0}^{k-1} \sum_{|\alpha|=k -n} {q_\alpha} \,  D^\alpha v^+_n(x)\big|_{x_1=0}, \quad x_2 \in {\mathcal{Z}}.
\end{equation}

Relation \eqref{tran2}, in its expanded form, is represented as follows
\begin{align*}
 \sum_{k=0}^{\infty}\varepsilon^k \, \partial_{x_1}v^-_k(0,x_2) &  =
{\Big(\partial_{\xi_1}N_1(\xi) +1\Big)\big|_{\xi_1=0}\, \partial_{x_1}v^+_0(x)\big|_{x_1=0}}
 +  \sum_{k=1}^\infty\varepsilon^k  \bigg(\Big(\partial_{\xi_1}N_1(\xi) +1\Big)\big|_{\xi_1=0}\,
\partial_{x_1}v^+_k (x)\big|_{x_1=0}
\\
& +
 \sum_{n=0}^{k-1} \sum_{|\alpha|=k -n +1} {\Big(\partial_{\xi_1}N_\alpha(\xi) + \delta_{\alpha_1, 1} N_{\alpha_2 \ldots \alpha_{k-n+1}}(\xi)\Big)\big|_{\xi_1=0} } \, D^\alpha v^+_n(x)\big|_{x_1=0}\bigg)
\\
& + {\sum_{|\alpha|=1} \big[\partial_{\xi_1}B_\alpha \big]_{\xi_1=0} \, D^\alpha v^+_0(x)\big|_{x_1=0}}
 + \sum_{k=1}^\infty\varepsilon^{k}\bigg(\sum_{|\alpha|=1} \big[\partial_{\xi_1}B_\alpha \big]_{\xi_1=0} \, D^\alpha v^+_k(x)\big|_{x_1=0}
\\
&
+  \sum_{n=0}^{k-1}\sum_{|\alpha|=k+1-n} { \big[ \partial_{\xi_1}B_\alpha \big]_{\xi_1=0}} \, D^\alpha v^+_n(x)\big|_{x_1=0}\bigg).
\end{align*}
Equating terms with the same degree of $\varepsilon$ from the left and right sides of this equality, we conclude
\begin{itemize}
  \item at $\varepsilon^0 :$ \ \   $\big[\partial_{\xi_1}B_2 \big]_{\xi_1=0}=0;$   \ \ if  \ $\big[\partial_{\xi_1}B_1 \big]_{\xi_1=0} = - \Big(\partial_{\xi_1}N_1(\xi) +1\Big)\big|_{\xi_1=0} + {J_1},$ \ where ${J_1}$ is a constant,  then
\begin{equation}\label{tran-cond-2}
 \partial_{x_1}v^-_0(x)\big|_{x_1=0} = {J_1} \, \partial_{x_1}v^+_0(x)\big|_{x_1=0},
\end{equation}
\item at $\varepsilon^k :$ \ \  if  \ ${\big[\partial_{\xi_1}B_\alpha \big]_{\xi_1=0}} = - {\Big(\partial_{\xi_1}N_\alpha(\xi) + \delta_{\alpha_1, 1} N_{\alpha_2 \ldots \alpha_{k}}(\xi)\Big)\big|_{\xi_1=0}} + {J_\alpha},$ \ where $J_\alpha$ is a constant,  then
\begin{equation}\label{tran-cond-3}
 \partial_{x_1}v^-_k(x) \big|_{x_1=0} = {J_1} \, \partial_{x_1}v^+_k(x)\big|_{x_1=0} +
\sum_{n=0}^{k-1} \sum_{|\alpha|=k -n +1} {J_\alpha} \, D^\alpha v^+_n(x)\big|_{x_1=0}.
\end{equation}
\end{itemize}

As a result, we obtain the recurrent sequence of boundary-value problems to determine the coefficients $\{B_\alpha\} \colon$
\begin{equation}\label{Innner cell1}
\left\{
\begin{array}{l}
\Delta_{\xi}B^\pm_1(\xi) = 0 \ \  \text{in} \ \ \Upsilon^\pm, \qquad \partial_{\vec{\nu}_\xi} B^+_1(\xi) = 0\ \  \text{on} \ \ \mathfrak{S}^+,
\\[4pt]
\big[ B_1 \big]_{\xi_1=0}  =  {q_1},  \qquad
\big[\partial_{\xi_1}B_1 \big]_{\xi_1=0}
 =  - \big(\partial_{\xi_1}N_1(\xi) +1\big)\big|_{\xi_1=0} + {J_1} \ \ \text{at} \ \ {\gamma},
\\[4pt]
B^\pm_1(\xi) \to 0 \ \  \text{as} \ \ \xi_1\to \pm \infty, \qquad  B^\pm_1(\xi)\ \  \text{are} \ 1\text{-periodic in} \ \xi_2,
\end{array}
\right.
\end{equation}
\smallskip
\begin{equation}\label{Innner cell2}
\left\{
\begin{array}{l}
\Delta_{\xi}B^\pm_2(\xi) = 0 \ \  \text{in} \ \ \Upsilon^\pm, \qquad \partial_{\vec{\nu}_\xi} B^+_2(\xi) = 0\ \  \text{on} \ \ \mathfrak{S}^+,
\\[4pt]
\big[ B_2 \big]_{\xi_1=0} = - N_2(0,\xi_2) + {q_2},  \qquad
\big[\partial_{\xi_1}B_2 \big]_{\xi_1=0}  =  0 \ \ \text{at} \ \ {\gamma}
\\[4pt]
B^\pm_1(\xi) \to 0 \ \  \text{as} \ \ \xi_1\to \pm \infty, \qquad  B^\pm_1(\xi)\ \  \text{are} \ 1\text{-periodic in} \ \xi_2,
\end{array}
\right.
\end{equation}
for $|\alpha|\ge 2,$
\begin{equation}\label{Innner cell3}
\left\{
\begin{array}{l}
\Delta_{\xi}B^\pm_\alpha(\xi)  = -  2 \delta_{\alpha_1, 2} \, \partial_{\xi_2}B^\pm_{\alpha_2\ldots\alpha_k}(\xi) -
\delta_{\alpha_1, 2} \, \delta_{\alpha_2, 2}\,  B^\pm_{\alpha_3\ldots\alpha_k}(\xi)
\ \  \text{in} \ \ \Upsilon^\pm,
\\[4pt]
\partial_{\vec{\nu}_\xi}B^+_\alpha(\xi) = -  \delta_{\alpha_1, 2}\, \nu_{2}(\xi)\,
B^+_{\alpha_2\ldots\alpha_k}(\xi)  \ \  \text{on} \ \ \mathfrak{S}^+,
\\[4pt]
\big[B_\alpha \big]_{\xi_1=0}  = - N_\alpha(0,\xi_2) + {q_\alpha}  \ \ \text{at} \ \ {\gamma},
\\[4pt]
\big[\partial_{\xi_1}B_\alpha \big]_{\xi_1=0} = - \big(\partial_{\xi_1}N_\alpha(\xi) + \delta_{\alpha_1, 1} N_{\alpha_2 \ldots \alpha_{k}}(\xi)\big)\big|_{\xi_1=0} + {J_\alpha}  \ \ \text{at} \ \ {\gamma},
\\[4pt]
B^\pm_\alpha(\xi) \to 0 \ \  \text{as} \ \ \xi_1\to \pm \infty, \qquad  B^\pm_\alpha(\xi)\ \  \text{are} \ 1\text{-periodic in} \ \xi_2.
\end{array}
\right.
\end{equation}

To prove the solvability of this recurrent sequence of problems, consider the following model problem:
\ find \ $ B(\xi) = \left\{
                     \begin{array}{l}
                       B^+(\xi), \  \xi \in \Upsilon^+, \\
                       B^-(\xi), \ \xi \in \Upsilon^-,
                     \end{array}
                   \right.
$
 \  that solves the problem
\begin{equation}\label{Innner cell-model}
\left\{
\begin{array}{l}
\Delta_{\xi}B^\pm(\xi)  = F^\pm_0(\xi) +  \partial_{\xi_2}F^\pm_2(\xi)
\ \  \text{in} \ \ \Upsilon^\pm,
\\[4pt]
\partial_{\vec{\nu}_\xi}B^+(\xi) =  \nu_{2}(\xi)\,
F^+_2(\xi)  \ \  \text{on} \ \ \mathfrak{S}^+,
\\[4pt]
\big[B \big]_{\xi_1=0} = \Phi(\xi_2) + {q}  \quad  \text{and} \quad
\big[\partial_{\xi_1}B \big]_{\xi_1=0}
 = \Psi(\xi_2)  + {J}  \ \ \text{at} \ \ {\gamma},
\\[4pt]
B^\pm(\xi) \to 0 \ \  \text{as} \ \ \xi_1\to \pm \infty, \qquad  B^\pm(\xi)\ \  \text{are} \ 1\text{-periodic in} \ \xi_2
\end{array}
\right.
\end{equation}
\begin{theorem}\label{Th-3-1}
   Let the right-hand sides $F^\pm_0, \ F^\pm_2, \ \Phi, \ \Psi$ in problem \eqref{Innner cell-model} be smooth functions in their domains of definition and  1-periodic in $\xi_2.$
Let $e^{\delta_0\, |\xi_1|} \, F^\pm_0 \in L^2(\Upsilon^\pm), \ \ e^{\delta_0\, |\xi_1|} \, F^\pm_2 \in L^2(\Upsilon^\pm)$ for some $\delta_0 > 0$
 and
\begin{equation}
 { J} = - \int_{{\gamma}}\Psi(\xi_2) \, d\xi_2 - \int_{\Upsilon^\pm} F^\pm_0(\xi)\, d\xi .
\end{equation}

Then there exists a unique number ${q} \in \Bbb R$ and a unique  solution to problem \eqref{Innner cell-model}  with the following differentiable asymptotics
\begin{equation*}
  B(\xi) = \mathcal{O}\big(e^{-\delta\, |\xi_1|}\big) \ \ \text{as} \ \ |\xi_1| \to \infty \quad (\delta >0).
\end{equation*}

In addition,
\begin{itemize}
  \item if $F^\pm_0, \ \Phi, \ \Psi$ are odd in $\xi_2$ and $F^\pm_2$ are even in $\xi_2,$ then the solution $B$ is odd in $\xi_2$
and
$$
{q} =0 \quad \text{and} \quad {J}=0;
$$
\item if $F^\pm_0, \ \Phi, \ \Psi$ are even in $\xi_2$ and $F^\pm_2$ are odd in $\xi_2,$ then the solution $B$ is even  in $\xi_2.$
\end{itemize}
\end{theorem}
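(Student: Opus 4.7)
The plan is to lift the inhomogeneous interface and Neumann data into compactly supported correctors, solve the resulting transmission problem variationally (the compatibility condition fixing $J$), and then extract the constant $q$ together with exponential decay from a far-field analysis; the symmetry claims will follow from uniqueness.

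First, I would construct a smooth $1$-periodic (in $\xi_2$) corrector $\chi(\xi)$, compactly supported in $\xi_1$, realizing $[\chi]_{\xi_1=0}=\Phi(\xi_2)+q$ and $[\partial_{\xi_1}\chi]_{\xi_1=0}=\Psi(\xi_2)+J$, together with a standard lifting of $\nu_2 F_2^+$ on $\mathfrak{S}^+$. Subtracting these reduces the problem to $\Delta\widetilde{B}=\widetilde{F}_0^\pm$ in $\Upsilon^\pm$, with homogeneous Neumann on $\mathfrak{S}^+$, $\widetilde{B}$ and $\partial_{\xi_1}\widetilde{B}$ continuous across $\gamma$, $1$-periodicity in $\xi_2$, and decay at $\pm\infty$; the new right-hand side $\widetilde{F}_0$ retains the exponential decay of $F_0$. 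Next, testing the original equation with the constant $1$ on $\Upsilon^\pm\cap\{|\xi_1|<R\}$, applying Green's identity (the $F_2$-contributions on the hole boundaries being absorbed by the Neumann condition, and the lateral sides cancelling by $\xi_2$-periodicity), and passing to $R\to\infty$ using the expected decay, I obtain
\begin{equation*}
\int_{\gamma}[\partial_{\xi_1}B]\,d\xi_2 \;=\; -\int_{\Upsilon^+}F_0^+\,d\xi - \int_{\Upsilon^-}F_0^-\,d\xi,
\end{equation*}
which combined with $[\partial_{\xi_1}B]|_{\xi_1=0}=\Psi(\xi_2)+J$ yields exactly the claimed value of $J$.

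With $J$ so fixed, a Lax--Milgram argument in the quotient of the natural energy space by piecewise additive constants on $\Upsilon^\pm$ (coercive via a Poincar\'e-type inequality on the mean-zero $\xi_2$-slices, after removing the zeroth $\xi_2$-mode) produces a variational solution $\widetilde{B}$, unique up to such constants. The constant $q$ is then determined by insisting on genuine decay: on $\Upsilon^-$ a Fourier expansion in $\xi_2$ shows that the nonzero modes decay at rate $\ge 2\pi$ while the zero mode satisfies a $1$D Laplace equation and must be constant; on the perforated side $\Upsilon^+$ the analogue is a Bloch--Floquet / spectral-gap argument for the Neumann--Laplace operator on the cell $Y$ with quasi-momentum in $\xi_1$. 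Either way $\widetilde{B}^\pm\to b^\infty_\pm$ with exponentially small remainder, so subtracting the limits on each side forces genuine decay and shifts the jump by $b^\infty_- -b^\infty_+$; together with the lifting correction this pins down $q$ uniquely. Differentiable exponential asymptotics $B=\mathcal{O}(e^{-\delta|\xi_1|})$ then follow from interior elliptic regularity applied to the equation re-weighted by $e^{\delta|\xi_1|}$.

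Finally, the symmetry claims are consequences of uniqueness. Under the odd-data hypotheses, the reflection $-B(\xi_1,-\xi_2)$ (extended by $1$-periodicity in $\xi_2$) solves the same problem with $q$ and $J$ replaced by $-q$ and $-J$; by the uniqueness of $q$ and $J$ this forces $q=J=0$ and yields oddness of $B$. Under the even-data hypotheses, $B(\xi_1,-\xi_2)$ solves the same problem with the same $q,J$, so uniqueness forces $B$ to be even. The main technical obstacle I anticipate is the exponential decay on the periodic perforated semi-strip $\Upsilon^+$, where direct Fourier separation is unavailable: one must either carry out a Bloch--Floquet analysis on $Y$ exploiting the spectral gap of the Neumann Laplacian, or perform an adapted Saint-Venant energy iteration respecting the hole periodicity, together with verifying that the chosen $q$ indeed suppresses the zeroth mode at $+\infty$.
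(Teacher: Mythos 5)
Your proposal is sound in outline and arrives at the same pivot points — the compatibility identity fixing $J$ by testing with $1$, the one–parameter family of bounded solutions with $q$ singled out by decay at $\pm\infty$, the exponential asymptotics on the semi-cylinders, and symmetry via uniqueness — but the route through them differs from the paper's. The paper splits the bounded solution as $\widehat{B}=P_1+P_2$, where $P_1$ carries the source and the derivative jump with $[P_1]=0$ and $P_2$ carries the jump in the function with zero derivative jump; $P_2$ is then reduced to a $P_1$-type problem via the cut-off substitution $T=P_2-\Phi\,\eta(\xi_1)$ on $\Upsilon^+$. You instead lift both interface jumps (and the hole Neumann data) at once by a compactly supported corrector; this is cleaner to state but shifts the burden onto constructing a lift that does not spoil the divergence structure of $F_2$, which the paper sidesteps by keeping $F_0+\partial_{\xi_2}F_2$ in divergence form in the weak formulation. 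For solvability, the paper builds the weighted Hilbert space $\mathsf{H}$ (closure of compactly supported periodic functions in the Dirichlet$\,+\,\rho^2$-weighted $L^2$ norm), proves the equivalence of norms via Hardy's inequality and the Cioranescu--Saint Jean Paulin extension operator across the holes, and then applies the Fredholm alternative; you propose Lax--Milgram on the quotient by additive constants. Two remarks there: after your lift, $\widetilde B$ is continuous across $\gamma$, so the kernel is one-dimensional (global constants), not ``piecewise additive constants on $\Upsilon^\pm$''; and ``coercive via a Poincar\'e-type inequality on the mean-zero $\xi_2$-slices'' does not on its own control the $\xi_1$-dependence of the slice means — this is exactly what the paper's $\rho$-weight and Hardy estimate supply, so this step would need to be made rigorous along similar lines. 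The determination of $q$ and the exponential decay you sketch via Fourier/Bloch--Floquet on the two semi-strips is, in effect, a reconstruction of the Oleinik--Iosif'yan theorem the paper simply cites; your awareness that the perforated side $\Upsilon^+$ is the nontrivial case is exactly right. Finally, for symmetry the paper forms the sum $B(\xi_1,\xi_2)+B(\xi_1,1-\xi_2)$ and invokes part one of the theorem on the resulting homogeneous problem to get $q=J=0$ and the vanishing of the sum; your direct observation that $-B(\xi_1,-\xi_2)$ solves the same problem with $(q,J)\mapsto(-q,-J)$ and then appealing to uniqueness is an equivalent and slightly more economical formulation. Overall your plan is a valid alternative proof strategy; the one place where it is genuinely underspecified is the coercivity/uniqueness step in the quotient space, which the paper handles carefully with the weighted space $\mathsf{H}$.
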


\begin{remark}
The first statement  of this theorem was presented in \cite{Jag-Ole-Sham-1993}, highlighting that the proof relies on constructing a sequence of solutions to boundary value problems in finite domains
$
\Upsilon_M := (\Upsilon^- \cup \Upsilon^+) \cap \{\xi\colon |\xi_1| <M\}
$
and then passing to the limit as $M\to +\infty.$ In this context, results of paper \cite{Ole-Ios} concerning the behavior of solutions of elliptic equations in cylindrical domains with periodic boundary conditions  were used. In the present paper, a complete proof will be provided using the new approach outlined in point 2.

The second statement presents a new result.
\end{remark}

\begin{proof} \textbf{1.} Let   $\Upsilon := \Upsilon^- \cup {\gamma} \cup \Upsilon^+$.
 First we  look  for a bounded solution $\widehat{B}$ to problem \eqref{Innner cell-model}  in the form:  \  $\widehat{B} = P_1 + P_2,$   where
$P_1$ is a bounded solution to the problem
\begin{equation}\label{Innner cell-model1}
\left\{
\begin{array}{l}
\Delta_{\xi}P^\pm_1(\xi)  = F^\pm_0(\xi) +  \partial_{\xi_2}F^\pm_2(\xi)
\ \  \text{in} \ \ \Upsilon^\pm,
\\[4pt]
\partial_{\vec{\nu}_\xi}P^+_1(\xi) =  \nu_{2}(\xi)\,
F^+_2(\xi)  \ \  \text{on} \ \ \mathfrak{S}^+,
\\[4pt]
\big[P_1 \big]_{\xi_1=0} = 0  \quad  \text{and} \quad
\big[\partial_{\xi_1}P_1 \big]_{\xi_1=0}
 = \Psi(\xi_2)  + {J}  \ \ \text{at} \ \ {\gamma},
\\[4pt]
P_1 \ \  \text{is bounded in} \ \ \Upsilon, \qquad  P^\pm_1(\xi)\ \  \text{are} \ 1\text{-periodic in} \ \xi_2,
\end{array}
\right.
\end{equation}
and $P_2$ is a bounded solution to the problem
\begin{equation}\label{Innner cell-model2}
\left\{
\begin{array}{l}
\Delta_{\xi}P^\pm_2(\xi)  = 0
\ \  \text{in} \ \ \Upsilon^\pm,
\\[4pt]
\partial_{\vec{\nu}_\xi}P^+_2(\xi) =  0 \ \  \text{on} \ \ \mathfrak{S}^+,
\\[4pt]
\big[ P_2 \big]_{\xi_1=0} = \Phi(\xi_2)  \quad  \text{and} \quad
\big[\partial_{\xi_1}P_2\big]_{\xi_1=0}
 = 0  \ \ \text{at} \ \ {\gamma},
\\[4pt]
P_2 \ \  \text{is bounded in} \ \ \Upsilon, \qquad  P^\pm_2(\xi)\ \  \text{are} \ 1\text{-periodic in} \ \xi_2 .
\end{array}
\right.
\end{equation}
  First we study the solvability of problem \eqref{Innner cell-model1}, and then problem \eqref{Innner cell-model2} is reduced to problem of type \eqref{Innner cell-model1} with zero jump for the solution.

 \smallskip

 {\bf 2.}
Let $C^{\infty}_{0, per \xi_2}(\overline{\Upsilon}) $ be a space of functions that are infinitely differentiable in $\overline{\Upsilon},$ $1$-periodic in $\xi_2$ and have finite supports  with respect to  $\xi_1$, i.e.,
$$
\forall \,\varphi\in C^{\infty}_{0,per \xi_2}(\overline{\Upsilon}) \quad \exists \,R>0 \quad \forall \, \xi\in\overline{\Upsilon} \quad |\xi_1| \geq R\, : \quad \varphi(\xi)=0.
$$
We now define a Hilbert  space  $\mathsf{H} := \overline{\left( C^{\infty}_{0,per \xi_2}(\overline{\Upsilon}), \ \| \cdot \|_\mathsf{H} \right)}$, where the norm $\| \cdot \|_\mathsf{H}$ is generated by the scalar product
$$
(\varphi, \psi)_\mathsf{H}
 =  \int_\Upsilon\nabla_\xi \varphi \cdot \nabla_\xi \psi \, d\xi + \int_\Upsilon \rho^2(\xi_1) \, \varphi \, \psi \, d\xi
$$
with the weight function  $ \rho \in C^{\infty}(\Bbb{R})$ such that $0\leq \rho \leq1$ and
$$
\rho (\xi_1) =
\left\{\begin{array}{ll}
1,            & \mbox{if} \quad                        |\xi_1| \le 1, \\
|\xi_1|^{-1}, & \mbox{if} \quad |\xi_1| \geq 2.
\end{array}\right.
$$
It should be noted that each function from the space $\mathsf{H}$  has the finite Dirichlet integral and constant functions belong to this space.

\begin{definition}
  A function $P_1 \in \mathsf{H}$  is called a weak solution to  problem \eqref{Innner cell-model1} if the identity
\begin{equation}\label{integr}
    \int_{\Upsilon} \nabla_\xi P_1 \cdot \nabla_\xi \varphi \, d\xi =  \int_{\Upsilon^\pm} F_2^\pm  \, \partial_{\xi_2}\varphi \, d\xi - \int_{\Upsilon^\pm} F_0^\pm  \, \varphi \, d\xi - \int_{{\gamma}}\big(\Psi(\xi_2) + {J}\big) \, \varphi(0,\xi_2) \, d\xi_2
\end{equation}
holds for all $\varphi \in \mathsf{H}$.
\end{definition}

Hereinafter the symbol $\int_{\Upsilon^\pm} F^\pm \,d\xi$ means the sum $\int_{\Upsilon^+} F^+\,d\xi + \int_{\Upsilon^-} F^- \,d\xi.$
Now we prove the statement.
\begin{proposition}\label{Prop-3-3}
 Let $\rho^{-1} F^\pm_0 \in L^2(\Upsilon), \ F^\pm_2 \in L^2(\Upsilon), \ \Psi \in L^2({\gamma}).$

There exists a  weak solution $P_1 \in \mathsf{H}$  to  problem \eqref{Innner cell-model1} if and only if
\begin{equation}\label{solv-cond}
{ J} = - \int_{{\gamma}}\Psi(\xi_2) \, d\xi_2 - \int_{\Upsilon^\pm} F^\pm_0(\xi)\, d\xi .
\end{equation}
This solution is defined up to an additive constant.
\end{proposition}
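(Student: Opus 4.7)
The plan is to cast the weak formulation \eqref{integr} of \eqref{Innner cell-model1} as a Lax--Milgram problem on the quotient space $\mathsf{H}/\mathbb{R}$. A preliminary remark is that constants belong to $\mathsf{H}$: the smooth plateau cutoffs $\varphi_n \in C^\infty_{0,per\xi_2}(\overline{\Upsilon})$ with $\varphi_n \equiv 1$ on $\{|\xi_1|<n\}$ and $\varphi_n \equiv 0$ on $\{|\xi_1|>2n\}$ satisfy $\|\nabla \varphi_n\|_{L^2(\Upsilon)} = O(n^{-1/2})$ and $\|\rho(\varphi_n - 1)\|_{L^2(\Upsilon)} \to 0$ thanks to the decay of the weight $\rho$, so $\varphi_n \to 1$ in $\mathsf{H}$.

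For the necessity of \eqref{solv-cond}, I would test \eqref{integr} against $\varphi \equiv 1$: the left-hand side vanishes while the right-hand side collapses to $-\int_{\Upsilon^\pm} F^\pm_0\,d\xi - \int_{\gamma}(\Psi + J)\,d\xi_2$, which forces precisely \eqref{solv-cond}. For sufficiency, assuming \eqref{solv-cond}, the functional $L(\varphi) := \int_{\Upsilon^\pm} F^\pm_2\,\partial_{\xi_2}\varphi\,d\xi - \int_{\Upsilon^\pm} F^\pm_0\,\varphi\,d\xi - \int_{\gamma}(\Psi + J)\,\varphi\big|_{\xi_1=0}\,d\xi_2$ is continuous on $\mathsf{H}$: the first term is bounded via Cauchy--Schwarz by $\|F^\pm_2\|_{L^2(\Upsilon)}\|\nabla\varphi\|_{L^2(\Upsilon)}$; the second by $\|\rho^{-1}F^\pm_0\|_{L^2(\Upsilon)}\|\rho\varphi\|_{L^2(\Upsilon)}$; the trace term by continuity of the trace operator $\mathsf{H}\to L^2(\gamma)$ applied on the bounded slab $\Upsilon \cap \{|\xi_1|<1\}$. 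Moreover $L(1)=0$ by \eqref{solv-cond}, so $L$ descends to a continuous linear functional on $\mathsf{H}/\mathbb{R}$.

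The main obstacle is to prove a weighted Poincaré-type inequality showing that the Dirichlet form $a(\varphi,\psi) = \int_\Upsilon \nabla\varphi \cdot \nabla\psi\,d\xi$ is coercive on $\mathsf{H}/\mathbb{R}$, namely that for every $\varphi \in \mathsf{H}$ there exists $c_\varphi \in \mathbb{R}$ with $\|\varphi - c_\varphi\|_\mathsf{H} \le C\|\nabla\varphi\|_{L^2(\Upsilon)}$. I would split $\varphi = \bar\varphi(\xi_1) + \widetilde\varphi(\xi)$ into its $\xi_2$-mean and oscillatory part; the oscillatory part is handled by Fourier expansion in $\xi_2$, giving $\|\widetilde\varphi\|_{L^2(\Upsilon)} \le (2\pi)^{-1}\|\partial_{\xi_2}\varphi\|_{L^2(\Upsilon)}$ since the zero Fourier mode is removed. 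For $\bar\varphi$, which depends on $\xi_1$ alone, I would pick $c_\varphi := \bar\varphi(0)$, apply the classical Poincaré inequality on $\{|\xi_1|<2\}$, and invoke a one-dimensional Hardy inequality on each semi-infinite interval $\{\xi_1 > 2\}$ and $\{\xi_1 < -2\}$, exploiting the profile $\rho^2(\xi_1) \le \xi_1^{-2}$ for $|\xi_1|\ge 2$ to control $\|\rho(\bar\varphi - c_\varphi)\|_{L^2}$ by $\|\bar\varphi'\|_{L^2}$. Once coercivity is established, Lax--Milgram on $\mathsf{H}/\mathbb{R}$ yields a unique equivalence class $[P_1]$ with $a(P_1,\varphi) = L(\varphi)$ for all $\varphi \in \mathsf{H}$, and any representative is the desired weak solution of \eqref{Innner cell-model1}.

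Finally, uniqueness up to an additive constant is immediate: if $P_1$ and $P'_1$ are two weak solutions, their difference $Q \in \mathsf{H}$ satisfies $a(Q,\varphi) = 0$ for every $\varphi \in \mathsf{H}$; taking $\varphi = Q$ gives $\|\nabla Q\|_{L^2(\Upsilon)} = 0$, hence $Q$ is constant on the connected set $\Upsilon$.
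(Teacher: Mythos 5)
Your overall strategy is sound and takes a genuinely different route from the paper. Where you set up a Lax--Milgram problem on the quotient $\mathsf{H}/\mathbb{R}$ and argue coercivity of the Dirichlet form there, the paper instead shows that $\langle\psi,\varphi\rangle := \int_\Upsilon\nabla\psi\cdot\nabla\varphi\,d\xi + \int_{\Upsilon_2}\psi\varphi\,d\xi$ is an equivalent inner product on $\mathsf{H}$, recasts \eqref{integr} as $P_1 - \mathbf{A}P_1 = \mathcal{F}$ with $\mathbf{A}$ compact (the embedding $\mathsf{H}\hookrightarrow L^2(\Upsilon_2)$ is compact), and then invokes the Fredholm alternative, with the cokernel identified with the constants. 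Your approach is more streamlined, but both routes ultimately hinge on exactly the same weighted inequality \eqref{add-ineq}.

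However, there is a genuine gap in your coercivity step. You propose to split $\varphi = \bar\varphi(\xi_1) + \widetilde\varphi(\xi)$ into its $\xi_2$-mean and oscillatory part and to estimate the latter by Fourier expansion in $\xi_2$. This works on the full semi-strip $\Upsilon^- = (-\infty,0)\times(0,1)$, but on the perforated semi-strip $\Upsilon^+$ the cross-sections $\{\xi_2\colon(\xi_1,\xi_2)\in\Upsilon^+\}$ are not all of $(0,1)$ --- they change with $\xi_1$ as the holes pass through --- so neither the $\xi_2$-mean $\bar\varphi(\xi_1)$ nor the Fourier decomposition in $\xi_2$ is directly available. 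The missing ingredient is the Cioranescu--Saint Jean Paulin extension operator $\mathcal{P}\colon H^1(Y)\to H^1(\square)$ with the uniform bounds \eqref{extention} (applied cell by cell along the strip): one first extends $\varphi$ from $\Upsilon^+$ to the unperforated $\Upsilon^{+,0}$, performs the Fourier/mean decomposition and the one-dimensional Hardy estimate for the extended function there, and then restricts back, absorbing the extension constants. This is precisely what the paper does in \eqref{in2}. Without the extension operator your Poincar\'e-type inequality $\|\rho(\varphi-c_\varphi)\|_{L^2(\Upsilon)}\le C\|\nabla\varphi\|_{L^2(\Upsilon)}$ is not established on $\Upsilon^+$, and Lax--Milgram has nothing to stand on. Once you insert the extension step, the rest of your argument (necessity by testing against $1$, continuity of $L$ and its vanishing on constants, Lax--Milgram on the quotient, uniqueness up to constants) goes through.
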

\begin{proof}
The proof of necessity follows immediately from  identity \eqref{integr}, taking the test function equal to 1. Sufficiency is demonstrated as follows.
First, considering the inclusions $\rho^{-1} F^\pm_0 \in L^2(\Upsilon),$ $F^\pm_2 \in L^2(\Upsilon),$ $\Psi \in L^2({\gamma}),$ it is easy to
prove that the right-hand side of identity \eqref{integr} is a linear and bounded functional  over the space $\mathsf{H}.$

The left side of identity \eqref{integr} can be rewritten as follows
$$
\int_{\Upsilon} \nabla P_1 \cdot \nabla \varphi \, d\xi = \langle P_1, \varphi \rangle - \int_{\Upsilon_2} P_1 \,  \varphi \, d\xi,
$$
where $\Upsilon_k := \Upsilon \cap \{\xi\colon |\xi_1| <k\}$  and
\begin{equation}\label{new-scalar-prod}
  \langle \psi, \varphi \rangle := \int_\Upsilon \nabla_\xi \psi \cdot \nabla_\xi \varphi \, d\xi + \int_{\Upsilon_2} \psi \, \varphi \, d\xi.
\end{equation}
Then the new scalar product \eqref{new-scalar-prod} generates an  equivalent norm in $\mathsf{H}.$ It is obvious that
$\langle \psi,\psi\rangle \le c_1 \|\psi\|^2_{\mathsf{H}},$ $\psi \in\mathsf{H}.$
To prove the inverse inequality with another constant, it  suffices to show that
\begin{equation}\label{add-ineq}
  \int_{\Upsilon^+} \rho^2(\xi_1) \, \psi^2 \, d\xi \le C_1 \Big(\int_{\Upsilon^+_2} \psi^2 \, d\xi + \int_{\Upsilon^+} |\nabla_\xi \psi|^2 \, d\xi\Big) \quad \text{for any} \ \ \psi\in \mathsf{H},
\end{equation}
where $\Upsilon^+_k := \Upsilon^+ \cap \{\xi\colon 0 < \xi_1 <k\}.$ To do this, we use  Hardy's inequality
\begin{equation}\label{Hardy}
\int^{+\infty}_{0}(1+\xi_1)^{-2} \, \phi^2(\xi_1)\,d\xi_1 \le
4\int^{+\infty}_{0}|\partial_{\xi_1}\phi|^2\,d\xi_1,\qquad \phi\in H^1((0,+\infty))\, ,\ \ \phi(0)=0 ,
\end{equation}
a linear extension operator $\mathcal{P}\colon H^1(Y) \mapsto H^1(\square)$ (see, e.g., \cite{Cio-Paulin1079}) such that $\mathcal{P} u = u$ if $u = const,$ and
\begin{equation}\label{extention}
  \|\mathcal{P}u\|_{H^1(\square)} \le c_2 \|u\|_{H^1(Y)} \quad \text{and} \quad
  \|\nabla_\xi \mathcal{P}u\|_{L^2(\square)} \le c_3 \|\nabla_\xi u\|_{L^2(Y)},
\end{equation}
where the constants $c_2$ and $c_3$ are independent of $u \in H^1(Y),$ and the cut-off function $\chi \in C^{\infty}(\Bbb R),$ $ 0\le\chi\le 1,$
$$
\chi(\xi_1)= \left\{
               \begin{array}{ll}
                 0, & \xi_1 \le 1,\\
                 1, & \xi_1 \ge 2.
               \end{array}
             \right.
$$

Then for any function $\psi \in \mathsf{H}$ we have
\begin{equation}\label{in1}
  \int_{\Upsilon^+} \rho^2(\xi_1) \, \psi^2 \, d\xi \le  \int_{\Upsilon^+_2} \psi^2 \, d\xi + \int_{\Upsilon^+} \chi^2(\xi_1)\, \rho^2(\xi_1) \, \psi^2 \, d\xi.
\end{equation}
Next we estimate the second integral in the right-hand side of \eqref{in1}:
\begin{align}
  \int_{\Upsilon^+}  \rho^2(\xi_1) \, \chi^2(\xi_1)\, \psi^2 \, d\xi = &  \sum_{k=0}^{+\infty}\int_{\Upsilon^+_{k, k+1}}  \rho^2(\xi_1) \, \chi^2(\xi_1)\, \psi^2 \, d\xi
\le  \sum_{k=0}^{+\infty}\int_{\Upsilon^{+,0}_{k, k+1}}  \rho^2(\xi_1) \, \chi^2(\xi_1)\, (\mathcal{P}\psi)^2 \, d\xi \notag
\\
  = &  \int_{0}^{1}\Big(\int_0^{+\infty} \rho^2(\xi_1) \, \chi^2(\xi_1)\, (\mathcal{P}\psi)^2 \, d\xi_1\Big) d\xi_2 \notag
\\
\stackrel{\eqref{Hardy}}{\le} &\ C_2  \int_{0}^{1}\Big(\int_0^{+\infty} \big|\partial_{\xi_1}\big(\chi(\xi_1)\, \mathcal{P}\psi\big) \big|^2 \, d\xi_1\Big) d\xi_2
\notag
\\
\le &\ 2\,  C_2 \int_{\Upsilon^{+,0}} \big(|\chi'(\xi_1)\, \mathcal{P}\psi|^2 + \chi^2(\xi_1)\, |\partial_{\xi_1}\mathcal{P}\psi|^2\big) \, d\xi
\notag
\\
\le & \ C_3 \Big( \int_{\Upsilon^{+,0}_{1, 2}} |\mathcal{P}\psi|^2 \, d\xi + \sum_{k=0}^{+\infty}\int_{\Upsilon^{+,0}_{k, k+1}} |\partial_{\xi_1}\mathcal{P}\psi|^2\, d\xi \Big)
\notag
\\
\stackrel{\eqref{extention}}{\le} &\  C_4 \Big(\int_{\Upsilon^{+}_{1, 2}}(\psi^2 + |\nabla_\xi \psi|^2) \, d\xi + \sum_{k=0}^{+\infty}\int_{\Upsilon^{+}_{k, k+1}} |\nabla_{\xi}\psi|^2\, d\xi \Big)
\notag
\\
\le &\ C_5 \Big( \int_{\Upsilon^{+}_{1, 2}} \psi^2 \, d\xi + \int_{\Upsilon^+} |\nabla_{\xi}\psi|^2\, d\xi \Big) .\label{in2}
\end{align}
From \eqref{in1} and \eqref{in2} it follows inequality  \eqref{add-ineq}. Here $\Upsilon^{+,0}_{k, k+1} := \Upsilon^{+,0} \cap \{\xi\colon \ k<\xi_1 < k+1\},$
$\Upsilon^{+}_{k, k+1} := \Upsilon^{+} \cap \{\xi\colon \ k<\xi_1 < k+1\}.$ It is obvious that $\Upsilon^{+,0}_{k, k+1}$ and $\Upsilon^{+}_{k, k+1}$ are the translation of the unit square $\square$ and the periodicity cell $Y$, respectively, onto the vector $(k,0),$ $k\in \Bbb N.$
Therefore, we use the same notation for the extension operator from $H^1(\Upsilon^{+}_{k, k+1})$ into $H^1(\Upsilon^{+,0}_{k, k+1})$   as in  \eqref{extention}; clearly the constants are the same.

As the embedding  $ \mathsf{H} \subset  L^{2}(\Upsilon_2) $ is compact, there exists a self-adjoint positive compact operator
$\mathbf{A} : \mathsf{H} \mapsto \mathsf{H}$ such that
$$
 \langle \mathbf{A}\psi, \varphi \rangle =    \int_{\Upsilon_2} \psi \, \varphi \, d\xi \, , \quad \{ \psi, \varphi \}
                                        \in \mathsf{H} .
$$
 Thus, using the Riesz representation theorem, we can rewrite  identity \eqref{integr} as the operator equation
$$
P_1 - \mathbf{A} P_1 = \mathcal{F},
$$
and apply  the Fredholm theory to it. It is obvious that every solution of the corresponding homogeneous problem for problem \eqref{Innner cell-model1}
in the space $\mathsf{H}$  is a constant (its Dirichlet integral is  trivial). Therefore,  equality \eqref{solv-cond}  is the solvability condition for  problem \eqref{Innner cell-model1}. \end{proof}

From Proposition~\ref{Prop-3-1}  and from  a theorem on the behavior of solutions to elliptic equations in cylindrical domains with periodic boundary conditions  \cite{Ole-Ios} (see also \cite[Chapt. I, \S 8]{Ole-Ios-Sha-1991}) it follows the statement.

\begin{proposition}\label{Prop-3-4}
 Let $e^{\delta_0\, |\xi_1|} \, F^\pm_0 \in L^2(\Upsilon^\pm), \ \ e^{\delta_0\, |\xi_1|} \, F^\pm_2 \in L^2(\Upsilon^\pm)$ for some $\delta_0 > 0,$
and  equality \eqref{solv-cond} be satisfied.
\ Then there exists a unique solution $P_1 \in \mathsf{H}$  to  problem \eqref{Innner cell-model1} with the following differentiable asymptotics:
\begin{equation}\label{inner_asympt_general}
P_1(\xi)=\left\{
\begin{array}{rl}
  C_1 +  {\mathcal O}(e^{\delta_1 \xi_1}) & \mbox{as} \ \ \xi_1\to -\infty \quad (\delta_1 >0),
\\[2mm]
     {\mathcal O}(e^{- \delta_2 \xi_1}) & \mbox{as} \ \ \xi_1\to+\infty \quad (\delta_2 >0),
\end{array}
\right.
\end{equation}
where $C_1$ is a constant.
\end{proposition}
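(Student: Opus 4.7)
The plan is to combine the weak existence result of Proposition~\ref{Prop-3-3} with the classical asymptotic theory for elliptic problems in semi-infinite cylinders with periodic cross-section developed in \cite{Ole-Ios}.

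First, I observe that the stronger exponential integrability hypothesis $e^{\delta_0|\xi_1|}F^\pm_0,\; e^{\delta_0|\xi_1|}F^\pm_2 \in L^2(\Upsilon^\pm)$ implies in particular $\rho^{-1}F^\pm_0 \in L^2(\Upsilon)$ and $F^\pm_2 \in L^2(\Upsilon)$, and the prescribed value of $J$ is exactly the solvability condition \eqref{solv-cond}. Hence Proposition~\ref{Prop-3-3} applies and yields a weak solution $P_1 \in \mathsf{H}$, unique modulo an additive constant. Standard interior and boundary elliptic regularity, applied locally in $\xi_1$ and periodically extended in $\xi_2$, upgrades $P_1$ to a smooth classical solution in $\overline{\Upsilon^+}\setminus {\gamma}$ and in $\overline{\Upsilon^-}$, with the prescribed jump conditions across ${\gamma}$.

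Second, I fix $R>0$ so large that the sources are exponentially small on the tails $\{|\xi_1|>R\}$. On each tail the restriction of $P_1$ satisfies an elliptic equation in a semi-infinite periodic cylinder (perforated for $\xi_1>R$, unperforated for $\xi_1<-R$) with $1$-periodic boundary data in $\xi_2$ and Neumann conditions on the holes, with right-hand side in the exponentially weighted $L^2$-class. Since $P_1 \in \mathsf{H}$, its Dirichlet integral over each tail is finite, so the energy hypothesis needed to invoke the Oleinik--Iosifyan decomposition is satisfied. That decomposition (separation of variables with respect to the associated Steklov eigenproblem on the periodic cross-section) produces a constant term plus an exponential remainder: there exist constants $c_\pm$ and $\delta_1,\delta_2>0$ (determined by the spectral gap of the cross-sectional operator and by $\delta_0$) such that
\begin{equation*}
P_1^+(\xi) = c_+ + \mathcal{O}(e^{-\delta_2 \xi_1}) \ \ \text{as}\ \xi_1\to +\infty, \qquad
P_1^-(\xi) = c_- + \mathcal{O}(e^{\delta_1 \xi_1}) \ \ \text{as}\ \xi_1\to -\infty,
\end{equation*}
with analogous bounds for all derivatives obtained by applying interior Schauder estimates on unit cells shifted along $\xi_1$ combined with the uniform exponential tail bound.

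Finally, I fix the free additive constant by requiring $c_+=0$; then $c_-=: C_1$ is uniquely determined by the data, and the differentiable asymptotics \eqref{inner_asympt_general} follow. Uniqueness of the normalized solution is inherited from Proposition~\ref{Prop-3-3}. The main technical point is verifying the applicability of the Oleinik--Iosifyan decomposition inside the perforated semi-strip $\Upsilon^+$, namely that the Neumann condition on the periodic array of holes is compatible with the Steklov-type spectral splitting on the transverse cell $Y$; once that is established, the differentiability of the asymptotics is a routine consequence of the Caccioppoli inequality on the dyadic-shifted blocks $\Upsilon^+_{k,k+1}$.
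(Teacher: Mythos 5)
Your proposal is correct and follows essentially the same route as the paper: invoke Proposition~\ref{Prop-3-3} (the paper's text cites Proposition~\ref{Prop-3-1}, evidently a typo) to get a weak solution in $\mathsf{H}$ unique up to a constant, then apply the Oleinik--Iosif'yan theorem on the asymptotic behaviour in periodic semi-cylinders to obtain the decomposition constant-plus-exponential on each tail, and finally normalise by requiring the constant on the right tail to vanish. Your added detail on elliptic regularity, the weighted-$L^2$ comparison $\rho^{-1}F_0^\pm \in L^2$, and how uniqueness (not just uniqueness modulo constants) emerges from the normalisation $c_+=0$ simply fills in steps the paper leaves implicit.
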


\smallskip

 {\bf 3.} By substitution
$$
\displaystyle T(\xi) = \left\{
                                                      \begin{array}{ll}
                                                        P^-_2(\xi), & \xi \in \Upsilon^-, \\[2pt]
                                                        P^+_2(\xi) - \Phi(\xi_2)\, \eta(\xi_1), & \xi \in \Upsilon^+,
                                                      \end{array}
                                                    \right.
$$
where $\displaystyle  \eta(\xi_1) = \left\{
                                                      \begin{array}{ll}
                                                      1, & \xi_1  \in [0, \tau/2], \\[2pt]
                                                        0, & \xi_1 \ge \tau,
                                                      \end{array}
                                                    \right.
$ and $\tau$ is the distant from $\gamma$ till $G_0$,
problem \eqref{Innner cell-model2} is reduced  to
\begin{equation}\label{Innner cell-model4}
\left\{
\begin{array}{l}
\Delta_{\xi}T^-(\xi)  = 0
\ \  \text{in} \ \ \Upsilon^-,
\qquad \Delta_{\xi}T^+(\xi)  = - \Phi''(\xi_2) \, \eta(\xi_1) - \Phi(\xi_2) \, \eta''(\xi_1)
\ \  \text{in} \ \ \Upsilon^+,
\\[4pt]
\partial_{\vec{\nu}_\xi}T^+(\xi) =  0 \ \  \text{on} \ \ \mathfrak{S}^+,
\\[4pt]
\big[T \big]_{\xi_1=0} = 0   \quad  \text{and} \quad
\big[\partial_{\xi_1}T\big]_{\xi_1=0}
 = 0  \ \ \text{at} \ \ \ {\gamma}
\\[4pt]
T \ \  \text{is bounded in} \ \ \Upsilon, \qquad  T \ \  \text{are} \ 1\text{-periodic in} \ \xi_2 .
\end{array}
\right.
\end{equation}

It is easy verify that the solvability condition for this problem is satisfied and from Propositions \ref{Prop-3-3} and \ref{Prop-3-4}  it follows that there is a unique solution with the differentiable asymptotics
\begin{equation*}
T(\xi)=\left\{
\begin{array}{rl}
C_2 +  {\mathcal O}(e^{\delta_3 \xi_1}) & \mbox{as} \ \ \xi_1\to -\infty \quad (\delta_3 >0),
\\[2mm]
     {\mathcal O}(e^{- \delta_4 \xi_1}) & \mbox{as} \ \ \xi_1\to+\infty, \quad (\delta_4 >0),
\end{array}
\right.
\end{equation*}
where $C_2$ is a constant.

Then  $\displaystyle P_2(\xi) = \left\{
                                                      \begin{array}{ll}
                                                        T^-(\xi), & \xi \in \Upsilon^-, \\[2pt]
                                                        T^+(\xi) + \Phi(\xi_2)\, \eta(\xi_1), & \xi \in \Upsilon^+,
                                                      \end{array}
                                                    \right.
$
and $P_2$ has the same asymptotics as $T.$

Thus, the bounded solution $\widehat{B} = P_1 + P_2$  to problem \eqref{Innner cell-model} has the asymptotics
\begin{equation*}
\widehat{B}(\xi) = \left\{
\begin{array}{rl}
  C_1 + C_2 +  {\mathcal O}(e^{\delta_5 \xi_1}) & \mbox{as} \ \ \xi_1\to -\infty \quad (\delta_5 >0),
\\[2mm]
     {\mathcal O}(e^{- \delta_6 \xi_1}) & \mbox{as} \ \ \xi_1\to+\infty, \quad (\delta_6 >0),
\end{array}
\right.
\end{equation*} {}
and the desired solution to the model problem \eqref{Innner cell-model} is determined by the formula
$$
B(\xi) = \left\{
                                                      \begin{array}{ll}
                                                        \widehat{B}^-(\xi) - q, & \xi \in \Upsilon^-, \\[2pt]
                                                        \widehat{B}^+(\xi), & \xi \in \Upsilon^+,
                                                      \end{array}
                                                    \right.
\quad \text{where} \ \ q := C_1  + C_2.
$$

\smallskip

 {\bf 4.} Now let us prove the second statement of the theorem. Suppose that $F^\pm_0, \ \Phi, \ \Psi$ are odd in $\xi_2$ and $F^\pm_2$ are even in $\xi_2.$ This means that the right-hand sides $F^\pm_0 +  \partial_{\xi_2}F^\pm_2$ in the differential equations of problem \eqref{Innner cell-model}
are odd functions in $\xi_2.$ Since the second component $\nu_2(\xi)$ of the normal is odd in $\xi_2$ with respect to $\frac{1}{2},$ the right-hand side
$\nu_2 \, F^+_2$ in the Neumann condition on $\mathfrak{S}^+$  has the same symmetry.

Taking into account these facts and Remark~\ref{remark-3-2}, it is easy to verify that
the function
$$
 B(\xi_1, 1 - \xi_2) = \left\{
                     \begin{array}{l}
                       B^+(\xi_1, 1 - \xi_2), \  \xi \in \Upsilon^+, \\
                       B^-(\xi_1, 1 - \xi_2), \ \xi \in \Upsilon^-,
                     \end{array}
                   \right.
$$
 solves the problem
\begin{equation}\label{Innner cell-model+}
\left\{
\begin{array}{l}
\Delta_{\xi}\big(B^\pm(\xi_1, 1 - \xi_2)\big)  = - F^\pm_0(\xi) -  \partial_{\xi_2}F^\pm_2(\xi)
\ \  \text{in} \ \ \Upsilon^\pm,
\\[4pt]
\partial_{\vec{\nu}_\xi}\big(B^+(\xi_1, 1 - \xi_2)\big) = -  \nu_{2}(\xi)\,
F^+_2(\xi)  \ \  \text{on} \ \ \mathfrak{S}^+,
\\[4pt]
\big[B(\xi_1, 1 - \xi_2)\big]_{\xi_1=0} = - \Phi(\xi_2) + {q}  \quad  \text{and} \quad
\big[\partial_{\xi_1}\big(B(\xi_1, 1 - \xi_2)\big)\big]_{\xi_1=0}
 = - \Psi(\xi_2)  + {J}  \ \ \text{at} \ \ {\gamma},
\\[4pt]
B^\pm(\xi_1, 1 - \xi_2) \to 0 \ \  \text{as} \ \ \xi_1\to \pm \infty, \qquad  B^\pm(\xi_1, 1 - \xi_2)\ \  \text{are} \ 1\text{-periodic in} \ \xi_2 .
\end{array}
\right.
\end{equation}

 Thus,  the sum $ B(\xi_1, \xi_2) +  B(\xi_1, 1 - \xi_2),$ $\xi \in \Upsilon,$ satisfies relations
\begin{equation*}
\left\{
\begin{array}{l}
\Delta_{\xi}\big(B^\pm(\xi_1, \xi_2) + B^\pm(\xi_1, 1 - \xi_2)\big)  = 0
\ \  \text{in} \ \ \Upsilon^\pm,
\\[4pt]
\partial_{\vec{\nu}_\xi}\big(B^+(\xi_1, \xi_2) + B^+(\xi_1, 1 - \xi_2)\big) = 0 \ \  \text{on} \ \ \mathfrak{S}^+,
\\[4pt]
\big[B(\xi_1, \xi_2) + B(\xi_1, 1 - \xi_2)\big]_{\xi_1=0} =  2 {q}  \quad  \text{and} \quad
\big[\partial_{\xi_1}\big(B(\xi_1, \xi_2) + B(\xi_1, 1 - \xi_2)\big)\big]_{\xi_1=0}
 = 2 {J}  \ \ \text{at} \ \ {\gamma},
\\[4pt]
B^\pm(\xi_1, \xi_2) + B^\pm(\xi_1, 1 - \xi_2) \to 0 \ \  \text{as} \ \ \xi_1\to \pm \infty, \quad  B^\pm(\xi_1, \xi_2) + B^\pm(\xi_1, 1 - \xi_2)\ \  \text{are} \ 1\text{-periodic in} \ \xi_2 .
\end{array}
\right.
\end{equation*}
By virtue of the first statement of Theorem~\ref{Th-3-1} we have $J =0,$ $q=0$ and
$B(\xi_1, \xi_2) + B(\xi_1, 1 - \xi_2) = 0$ for $\xi \in \overline{\Upsilon},$ or
$$
B(\xi_1,  - \xi_2) = - B(\xi_1, \xi_2), \quad \xi \in \bigcup_{k\in \Bbb Z} \big(\overline{\Upsilon}  + (0,k)\big).
$$
Similarly, it is proved that
 if $F^\pm_0, \ \Phi, \ \Psi$ are even in $\xi_2$ and $F^\pm_2$ are odd in $\xi_2,$ then  $B$ is even  in $\xi_2.$
\end{proof}

Now we return to solving the cell problems \eqref{Innner cell1} - \eqref{Innner cell3}.
By Theorem~\ref{Th-3-1} 1,  if
\begin{equation}\label{form-J_1}
  {J_1} = \int_{{\gamma}} \big(\partial_{\xi_1}N_1(\xi) +1\big)\big|_{\xi_1=0}\, d\xi_2,
\end{equation}
then there exists a unique number
${q_1} \in \Bbb R$ and a unique  solution $B_1$ to problem~\eqref{Innner cell1}, which exponentially decreases  as $|\xi_1| \to \infty$ and  is even in $\xi_2.$
\begin{proposition}\label{Prop-3-5}
  The number
\begin{equation}\label{form-J_1+}
{J_1} = \, \upharpoonleft\!\! Y\!\!\upharpoonright  {h_{11}},
\end{equation}
where $h_{11}$ is determined by formula \eqref{ell1}.
\end{proposition}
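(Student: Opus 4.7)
The plan is to identify $J_1$ as the "horizontal flux" of the field $\nabla_\xi(\xi_1+N_1)$ through a vertical slice and then to show that this flux is independent of which slice we take. The key observation is that $\xi_1 + N_1$ is harmonic in $Y$ (since $\Delta_\xi N_1 = 0$) and that the Neumann condition $\partial_{\vec{\nu}_\xi} N_1 = -\nu_1$ in \eqref{cell1} is exactly what kills the normal flux of $\nabla_\xi(\xi_1+N_1)$ through $\partial G_0$.

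More precisely, I would fix $c\in(0,1)$ and apply the divergence theorem to $\nabla_\xi(\xi_1+N_1)$ on the slab $Y_c := Y\cap\{0<\xi_1<c\}$. The boundary of $Y_c$ consists of the vertical left edge $\gamma$ (intact, because $\partial G_0$ lies strictly inside $\square$), the right slice $\{\xi_1=c\}\cap Y$, the top and bottom horizontal edges, and pieces of $\partial G_0$. The contributions from the top and bottom cancel by the $1$-periodicity of $\partial_{\xi_2}N_1$ in $\xi_2$, and the contribution along $\partial G_0$ vanishes because $\partial_{\vec{\nu}_\xi}(\xi_1+N_1)=\nu_1-\nu_1=0$ there. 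What remains gives
\begin{equation*}
\int_{\{\xi_1=c\}\cap Y}\bigl(1+\partial_{\xi_1}N_1\bigr)\,d\xi_2 \;=\; \int_{\gamma}\bigl(1+\partial_{\xi_1}N_1\bigr)\big|_{\xi_1=0}\,d\xi_2 \;=\; J_1,
\end{equation*}
valid for every $c\in[0,1]$.

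To finish, I would integrate this identity with respect to $c$ on $(0,1)$ and apply Fubini:
\begin{equation*}
J_1 \;=\; \int_0^1 J_1\,dc \;=\; \int_0^1\!\!\int_{\{\xi_1=c\}\cap Y}\bigl(1+\partial_{\xi_1}N_1\bigr)\,d\xi_2\,dc \;=\; \int_Y \bigl(1+\partial_{\xi_1}N_1\bigr)\,d\xi \;=\; \upharpoonleft\!\! Y\!\!\upharpoonright\, h_{11},
\end{equation*}
where the last equality is the definition of $h_{11}$ from \eqref{ell1}.

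There is no serious obstacle here; the only bookkeeping point is to verify that the slice $\gamma$ does not meet $\partial G_0$ (given by the assumption that $G_0$ lies strictly inside $\square$), so that the integral over the left edge is literally the integral defining $J_1$ in \eqref{form-J_1}, and to handle the cancellations on horizontal edges correctly via $\xi_2$-periodicity.
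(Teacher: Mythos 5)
Your proof is correct and reaches the same conclusion, but it is organized differently from the paper's argument. The paper multiplies the equation $\partial_{\xi_1}(1+\partial_{\xi_1}N_1)+\partial_{\xi_2}^2N_1=0$ by the single test function $\phi(\xi)=1-\xi_1$ and integrates by parts over all of $Y$ in one stroke; the boundary term at $\gamma$ produces $-J_1$, the edge $\{\xi_1=1\}$ drops out since $\phi$ vanishes there, the top/bottom edges and $\partial G_0$ contribute nothing by periodicity and the Neumann condition, and the volume term gives $\int_Y(1+\partial_{\xi_1}N_1)\,d\xi=\upharpoonleft\!\!Y\!\!\upharpoonright h_{11}$. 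You instead first establish that the "horizontal flux" $\int_{\{\xi_1=c\}\cap Y}(1+\partial_{\xi_1}N_1)\,d\xi_2$ is independent of $c\in[0,1]$ by applying the divergence theorem on the slab $Y_c$, and then recover the volume average by integrating over $c$ via Fubini. These two arguments are in fact two sides of the same coin: since $1-\xi_1=\int_{\xi_1}^1 dc$, integrating the slice-constancy identity over $c$ is exactly what the paper's integration by parts against $\phi$ accomplishes all at once. Your version is slightly longer but arguably more transparent, as it makes the conservation-of-flux structure explicit and makes clear exactly where periodicity and the Neumann condition are used; the paper's version is more compact. One small bookkeeping point you gloss over: for those $c$ at which the vertical line $\{\xi_1=c\}$ cuts through $G_0$, the slice $\{\xi_1=c\}\cap Y$ is disconnected and $Y_c$ has portions of $\partial G_0$ on its boundary; the argument still goes through since the Neumann condition kills the flux on all of $\partial G_0\cap\overline{Y_c}$, but this is worth stating, and the identity should be read for such $c$ as holding on the (possibly broken) slice $\{\xi_1=c\}\cap Y$, which is what you wrote.
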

\begin{proof} The differential equation  of problem \eqref{cell1}  can be rewritten as follows
$$
 \partial_{\xi_1}\big(1 + \partial_{\xi_1}N_1\big) + \partial^2_{\xi_2}N_1 =0 \quad \text{in} \ \ Y.
$$
We multiply it by $(-\xi_1 +1)$ and integrate by parts in $Y.$ As a result, we get
$$
0  = \int_{\partial Y} \Big( \big(1 + \partial_{\xi_1}N_1\big) \nu_1(\xi) + \partial_{\xi_2}N_1 \nu_2(\xi)\Big) (-\xi_1 + 1)\, d \sigma_\xi
- \int_{Y} \big(1 + \partial_{\xi_1}N_1\big) \partial_{\xi_1}(-\xi_1 +1) \, d\xi
$$
Taking into account $1$-periodicity of $N_1$ and the Neumann condition on $\partial G_0,$ the previous equality is equivalent to the equality
 $$
0  = - \int_{\gamma} \big(1 + \partial_{\xi_1}N_1\big)\big|_{\xi_1=0}\, d\xi_2
+ \int_{Y} \big(1 + \partial_{\xi_1}N_1\big) \, d\xi,
$$
from which, considering \eqref{ell1},  follows relation \eqref{form-J_1+}.
\end{proof}

Since $N_2$ is odd in $\xi_2$ (see \eqref{sym1}),  by Theorem~\ref{Th-3-1} we have that ${q_2} =0$ and the solution $B_2$  to problem~\eqref{cell2}  decreases exponentially as $|\xi_1| \to \infty$ and
 is  odd in~$\xi_2.$
Thus, for $|\alpha|=1$ the following symmetry relations hold:
$$
B_{\alpha}(\xi_1, -\xi_2) = (-1)^{\delta_{\alpha,2}} \, B_{\alpha}(\xi_1, \xi_2), \ \ \xi \in \overline{\Upsilon}.
$$

Taking into account these properties of $B_1$ and $B_2$ and applying Theorem~\ref{Th-3-1} to problems
\begin{equation*}
\left\{
\begin{array}{l}
\Delta_{\xi}B^\pm_{\alpha_1 \alpha_2}(\xi)  = -  2 \delta_{\alpha_1, 2} \, \partial_{\xi_2}B^\pm_{\alpha_2}(\xi)  =0
\ \  \text{in} \ \ \Upsilon^\pm,
\\[4pt]
\partial_{\vec{\nu}_\xi}B^+_{\alpha_1 \alpha_2}(\xi) = - \delta_{\alpha_1, 2}\, \nu_{2}(\xi)\,
B^+_{\alpha_2}(\xi)  \ \  \text{on} \ \ \mathfrak{S}^+,
\\[4pt]
\big[B_{\alpha_1 \alpha_2} \big]_{\xi_1=0}
 = - N_{\alpha_1 \alpha_2}(0,\xi_2) + {q_{\alpha_1 \alpha_2}}  \ \ \text{at} \ \ {\gamma},
\\[4pt]
\big[\partial_{\xi_1} B_{\alpha_1 \alpha_2} \big]_{\xi_1=0}
 = - \big(\partial_{\xi_1}N_{\alpha_1 \alpha_2}(\xi) + \delta_{\alpha_1, 1} N_{\alpha_2}(\xi)\big)\big|_{\xi_1=0} + {J_{\alpha_1 \alpha_2}}  \ \ \text{at} \ \ {\gamma},
\\[4pt]
B^\pm_{\alpha_1 \alpha_2}(\xi) \to 0 \ \  \text{as} \ \ \xi_2\to \pm \infty, \qquad  B^\pm_{\alpha_1 \alpha_2}(\xi)\ \  \text{are} \ 1\text{-periodic in} \ \xi_2,
\end{array}
\right.
\end{equation*}
with  $|\alpha| = 2,$ we conclude  that  there are exponentially decreasing solutions to these problems  and
\begin{itemize}
  \item $\displaystyle B_{\alpha_1 \alpha_2}(\xi_1, -\xi_2) = (-1)^{\delta_{\alpha_1,2}\, + \, \delta_{\alpha_2,2}} \, B_{\alpha_1 \alpha_2}(\xi_1, \xi_2),$ \ \
$ \xi \in \overline{\Upsilon};$

\smallskip

\item if $\delta_{\alpha_1,2} +  \delta_{\alpha_2,2}$ is an odd number, then ${q_{\alpha_1 \alpha_2}} = {J_{\alpha_1 \alpha_2}} =0,$ {} i.e., ${q_{12}}={q_{21}}= {J_{12}}= {J_{21}}=0;$

\smallskip

\item $\displaystyle {J_{11}} = \int_{{\gamma}} \big(\partial_{\xi_1} N_{11} + N_1\big)|_{\xi_1=0}\, d\xi_2$ and
$\displaystyle
{J_{22}} = \int_{{\gamma}} \big(\partial_{\xi_1}N_{22}\big)|_{\xi_1=0}\, d\xi_2 + \int_{\Upsilon^\pm} \partial_{\xi_2} B^\pm_2\, d\xi .
$
 \end{itemize}

Then, by means of the method of mathematical induction, we can prove the following lemma.
\begin{lemma}
  The recurrent sequence of problems \eqref{Innner cell1} - \eqref{Innner cell3}
is uniquely solvable.   Furthermore, for each multi-index $\alpha, \ |\alpha|=k,$
\begin{itemize}
  \item $B_\alpha(\xi) = \mathcal{O}\big(e^{-\delta\, |\xi_1|}\big) \ \ \text{as} \ \ |\xi_1| \to \infty \quad (\delta >0);$

\smallskip

  \item $\displaystyle B_{\alpha}(\xi_1, -\xi_2) = (-1)^{\delta_{\alpha_1,2} + \ldots + \delta_{\alpha_k,2}} \, B_{\alpha}(\xi_1, \xi_2),$
\ \ $ \xi \in \overline{\Upsilon};$

\smallskip

  \item if $\delta_{\alpha_1,2} + \ldots + \delta_{\alpha_k,2}$ is an odd number, then ${q_{\alpha}} = {J_{\alpha}} =0.$
\end{itemize}
\end{lemma}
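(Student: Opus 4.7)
The plan is to proceed by strong induction on the length $k = |\alpha|$. The base cases $k=1$ and $k=2$ are already verified in the discussion preceding the lemma, where Theorem~\ref{Th-3-1} is invoked directly for $B_1, B_2$ and then for $B_{\alpha_1\alpha_2}$, yielding both the exponential decay and the stated parities.

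For the inductive step, fix $|\alpha| = k \ge 3$ and assume the statement for every multi-index of length $<k$. The data on the right-hand sides of \eqref{Innner cell3} are then entirely determined by $N_\alpha, N_{\alpha_2\ldots\alpha_k}, B_{\alpha_2\ldots\alpha_k}$ and $B_{\alpha_3\ldots\alpha_k}$, all known from earlier stages, and the inductive exponential decay ensures that the weighted $L^2$ hypotheses of Theorem~\ref{Th-3-1} hold. To cast \eqref{Innner cell3} into the model form \eqref{Innner cell-model}, I would split
$$
-2\delta_{\alpha_1,2}\,\partial_{\xi_2}B^\pm_{\alpha_2\ldots\alpha_k} = -\delta_{\alpha_1,2}\,\partial_{\xi_2}B^\pm_{\alpha_2\ldots\alpha_k} + \partial_{\xi_2}\bigl(-\delta_{\alpha_1,2}\,B^\pm_{\alpha_2\ldots\alpha_k}\bigr),
$$
setting $F_2^\pm := -\delta_{\alpha_1,2}B^\pm_{\alpha_2\ldots\alpha_k}$ so that the Neumann datum $-\delta_{\alpha_1,2}\nu_2 B^+_{\alpha_2\ldots\alpha_k}$ agrees with $\nu_2 F_2^+$, and collecting the remaining bulk terms in $F_0^\pm$. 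The solvability condition of Theorem~\ref{Th-3-1} then \emph{forces} the choice
$$
J_\alpha = -\int_\gamma \Psi(\xi_2)\,d\xi_2 - \int_{\Upsilon^\pm} F_0^\pm(\xi)\,d\xi,
$$
after which the first part of Theorem~\ref{Th-3-1} yields a unique $q_\alpha\in\mathbb R$ and a unique $B_\alpha$ satisfying $B_\alpha(\xi) = \mathcal{O}(e^{-\delta|\xi_1|})$.

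For the parity statement I would introduce $s_\alpha := \delta_{\alpha_1,2}+\ldots+\delta_{\alpha_k,2}$ and track the reflection $\xi_2 \mapsto 1-\xi_2$ through each ingredient. Lemma~\ref{Lemma-3-3} gives the $\xi_2$-parity $(-1)^{s_\alpha}$ for $N_\alpha$ and $(-1)^{s_\alpha-\delta_{\alpha_1,2}}$ for $N_{\alpha_2\ldots\alpha_k}$; the inductive hypothesis supplies the analogous parities for $B_{\alpha_2\ldots\alpha_k}$ and $B_{\alpha_3\ldots\alpha_k}$; and \eqref{sym-normal} says $\nu_2$ is odd in $\xi_2$. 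Splitting into the subcases $\alpha_1=1$ (in which $F_0^\pm,F_2^\pm$ vanish and only $\Phi,\Psi$ contribute) and $\alpha_1=2$ (in which the bulk terms are nontrivial, and differentiation $\partial_{\xi_2}$ flips parity while $\partial_{\xi_1}$ preserves it), a direct bookkeeping shows that $F_0^\pm,\Phi,\Psi$ all carry the common parity $(-1)^{s_\alpha}$ in $\xi_2$ while $F_2^\pm$ carries the opposite parity. The second statement of Theorem~\ref{Th-3-1} then gives $B_\alpha(\xi_1,-\xi_2) = (-1)^{s_\alpha}B_\alpha(\xi_1,\xi_2)$ and, when $s_\alpha$ is odd, produces the additional equalities $q_\alpha = J_\alpha = 0$.

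The main obstacle is precisely the parity bookkeeping in the inductive step: one must verify, uniformly in the two subcases $\alpha_1=1$ and $\alpha_1=2$, that every term entering $F_0^\pm$, $F_2^\pm$, $\Phi$ and $\Psi$ fits into exactly one of the two symmetry patterns of Theorem~\ref{Th-3-1}. Once this matching is set up correctly, the existence, uniqueness, decay, and symmetry all follow from a single application of Theorem~\ref{Th-3-1} at each inductive level.
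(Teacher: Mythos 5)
Your proposal is correct and follows essentially the paper's own approach: after the explicit base cases $|\alpha|=1,2$, the paper also proceeds by induction through Theorem~\ref{Th-3-1}, and your reduction of \eqref{Innner cell3} to the model form \eqref{Innner cell-model} via the choice $F_2^\pm = -\delta_{\alpha_1,2}B^\pm_{\alpha_2\ldots\alpha_k}$ (so that the divergence-form bulk term and the Neumann datum come from the same $F_2$) is precisely the intended matching. The parity bookkeeping you describe, using Lemma~\ref{Lemma-3-3}, relation \eqref{sym-normal}, and the inductive hypothesis on $B_{\alpha_2\ldots\alpha_k}$ and $B_{\alpha_3\ldots\alpha_k}$, is the correct content of the inductive step that the paper leaves implicit.
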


In the same way as the proof of Proposition~\ref{Prop-3-2}, we establish the following.

\begin{proposition}\label{Prop-3-6}
$$
B^\infty_\varepsilon(x) = 0 \quad \text{if} \ \ x \in \partial\Omega \cap \{x\colon \ x_2 =0\} \quad \text{or} \quad
x \in \partial\Omega \cap \{x\colon \ x_2 =d\}.
$$
\end{proposition}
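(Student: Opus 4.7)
The plan is to imitate the proof of Proposition~\ref{Prop-3-2} almost verbatim, trading the symmetry of $N_\alpha$ for that of $B_\alpha$, and adding one extra ingredient at the top edge $x_2=d$: the discreteness relation $\varepsilon = d/N$ together with the $1$-periodicity of $B_\alpha$ in $\xi_2$.

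First I would restrict $B^\infty_\varepsilon$ to the line $x_2=0$. Since $D^\alpha v^+_\varepsilon(x)|_{x_1=0}$ depends only on $x_2$, each term in the series takes the form $\varepsilon^{k+n}\, B^\pm_\alpha(\tfrac{x_1}{\varepsilon},0)\, D^\alpha v^+_n(0,0)$. I then split the multi-indices according to the parity of $\ell_\alpha := \delta_{\alpha_1,2}+\ldots+\delta_{\alpha_k,2}$ (the number of components of $\alpha$ equal to $2$). If $\ell_\alpha$ is odd, the preceding lemma gives $B_\alpha(\xi_1,-\xi_2) = -B_\alpha(\xi_1,\xi_2)$, hence $B_\alpha(\xi_1,0)=0$ and the term vanishes. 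If $\ell_\alpha$ is even, Proposition~\ref{Prop-3-1} yields $D^\alpha v^+_n(x_1,0)\equiv 0$ for $x_1\in(0,d)$ (from the first bullet if $\ell_\alpha=0$; otherwise, from $\partial_{x_2}^{\ell_\alpha}v^+_n(x_1,0)\equiv 0$ one gets the vanishing of all its $x_1$-derivatives), and by smoothness the identity extends to $x_1=0$; so $D^\alpha v^+_n(0,0)=0$ and again the term is annihilated.

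For the edge $x_2=d$ the only extra observation I need is that $\tfrac{d}{\varepsilon}=N\in\mathbb{N}$, so by $1$-periodicity in $\xi_2$,
\begin{equation*}
  B^\pm_\alpha\bigl(\tfrac{x_1}{\varepsilon},\tfrac{d}{\varepsilon}\bigr) = B^\pm_\alpha\bigl(\tfrac{x_1}{\varepsilon},N\bigr) = B^\pm_\alpha\bigl(\tfrac{x_1}{\varepsilon},0\bigr).
\end{equation*}
The same dichotomy on $\ell_\alpha$ now applies: for odd $\ell_\alpha$ the factor $B_\alpha(\xi_1,0)$ vanishes as before, whereas for even $\ell_\alpha$ the second bullet of Proposition~\ref{Prop-3-1} gives $D^\alpha v^+_n(\cdot,d)\equiv 0$ and hence $D^\alpha v^+_n(0,d)=0$. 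Termwise vanishing at $x_2=d$ follows.

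No real obstacle is expected; the only non-automatic step is the use of the discrete relation $\varepsilon=d/N$ to transport the analysis from $x_2=0$ to $x_2=d$ via periodicity, which is exactly the reason the parameter is restricted to this discrete set in Remark~1 of Sect.~\ref{Sec-2}. Everything else is a mechanical replay of the parity argument used for Proposition~\ref{Prop-3-2}.
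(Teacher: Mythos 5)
Your proposal is correct and follows essentially the same route the paper takes, which simply declares Proposition~\ref{Prop-3-6} to be established ``in the same way as the proof of Proposition~\ref{Prop-3-2}'': split on the parity of the number of $2$'s in $\alpha$, kill the odd-parity terms via the antisymmetry $B_\alpha(\xi_1,-\xi_2)=-B_\alpha(\xi_1,\xi_2)$, and kill the even-parity terms via Proposition~\ref{Prop-3-1}. Your explicit invocation of $d/\varepsilon=N\in\mathbb{N}$ together with the $1$-periodicity of $B_\alpha$ in $\xi_2$ for the edge $x_2=d$ is precisely the step the paper leaves tacit, so there is no gap.
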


Thus, we can determine all coefficients of the inner-layer asymptotics \eqref{as-B}. In addition, the transmission conditions \eqref{tran3}, \eqref{tran-cond-2} and \eqref{tran-cond-3} for the coefficients $\{v_k^\pm\}_{k\in \Bbb N_0}$ are found, which together with the equations \eqref{v_k-}, \eqref{v+0} and \eqref{v+k} form boundary value problems for them.

\subsection{Homogenized problem and problems for $\{v^\pm_k\}$}\label{par-3-3}

The function $ v_0(x) = \left\{
                     \begin{array}{ll}
                        v^+_0(x), & x \in \Omega^+, \\
                        v^-_0(x), & x  \in \Omega^-,
                     \end{array}
                   \right.
$
must be a solution to the problem
\begin{equation}\label{Homo-problem}
\left\{
\begin{array}{lcl}
\Delta_x v^-_0(x) = f^-(x) \ \ \text{in} \ \ \Omega^- , & &  \upharpoonleft\!\! Y\!\!\upharpoonright {h_{11}} \, \partial^2_{x_1}v^+_0(x) \  + \  \upharpoonleft\!\! Y\!\!\upharpoonright {h_{22}}\,  \partial^2_{x_2}v^+_0(x) = \ \upharpoonleft\!\! Y\!\!\upharpoonright f^+(x) \ \ \text{in} \ \ \Omega^+ ,
\\[2pt]
v^-_0(x)=0 \ \ \text{on} \ \ \Gamma^-,& &
v^+_0(x) = 0  \ \ \text{on} \ \ \Gamma^+,
\\[2pt]
v^-_0(0,x_2)   = v^+_0(0,x_2) \ \ \text{on} \ \ {\mathcal{Z}}, & &
\partial_{x_1}v^-_0(x) =\  \upharpoonleft\!\! Y\!\!\upharpoonright  {h_{11}} \, \partial_{x_1}v^+_0(x) \ \ \text{on} \ \ {\mathcal{Z}}.
\end{array}
\right.
\end{equation}
Problem \eqref{Homo-problem} is called a \emph{homogenized problem} for problem \eqref{original Problem}.

The coefficients $\{v_k^\pm\}_{k\in \Bbb N}$ must be solutions to the following problems
\begin{equation}\label{rec1}
\left\{
\begin{array}{l}
\Delta_x v^-_1(x) = 0 \ \ \text{in} \ \ \Omega^-, \hspace{1cm}   \upharpoonleft\!\! Y\!\!\upharpoonright {h_{11}} \, \partial^2_{x_1}v^+_1(x) \  + \  \upharpoonleft\!\! Y\!\!\upharpoonright {h_{22}}\,  \partial^2_{x_2}v^+_1(x) = 0 \ \ \text{in} \ \ \Omega^+ ,
\\[4pt]
v^-_1(x)=0 \ \ \text{on} \ \ \Gamma^-, \qquad
v^+_1(x) = 0  \ \ \text{on} \ \ \Gamma^+,
\\[4pt]
v^-_1(0,x_2)   = v^+_1(0,x_2) + {q_1}\, \partial_{x_1}v_0^+(x)|_{x_1=0}\ \ \text{on} \ \ {\mathcal{Z}},
\\[4pt]
\partial_{x_1}v^-_1(x) = \ \upharpoonleft\!\! Y\!\!\upharpoonright  {h_{11}} \, \partial_{x_1}v^+_1(x) + {J_{11}} \, \partial^2_{x_1}v^+_0(x)|_{x_1=0} +
 {J_{22}} \, \partial^2_{x_2}v^+_0(x)|_{x_1=0}  \ \ \text{on} \ \ {\mathcal{Z}},
\end{array}
\right.
\end{equation}{}
and
\begin{equation}\label{rec2}
\left\{
\begin{array}{l}
\Delta_x v^-_k(x) = 0 \ \ \text{in} \ \ \Omega^-, \hspace{0.9cm}   \upharpoonleft\!\! Y\!\!\upharpoonright {h_{11}} \, \partial^2_{x_1}v^+_k   +   \upharpoonleft\!\! Y\!\!\upharpoonright {h_{22}}\,  \partial^2_{x_2}v^+_k = \ \upharpoonleft\!\! Y\!\!\upharpoonright\,  f^+_k(x) \ \ \text{in} \ \ \Omega^+,
\\[4pt]
v^-_1(x)=0 \ \ \text{on} \ \ \Gamma^-, \qquad
v^+_1(x) = 0  \ \ \text{on} \ \ \Gamma^+,
\\[4pt]
\displaystyle
v^-_k(0,x_2)   = v^+_k(0,x_2) + \sum_{n=0}^{k-1} \sum_{|\alpha|=k -n} {q_\alpha} \,  D^\alpha v^+_n(x)\big|_{x_1=0} \ \ \text{on} \ \ {\mathcal{Z}},
\\[4pt]
\displaystyle
\partial_{x_1}v^-_1(x) = \ \upharpoonleft\!\! Y\!\!\upharpoonright  {h_{11}} \, \partial_{x_1}v^+_1(x) +
\sum_{n=0}^{k-1} \sum_{|\alpha|=k -n +1} {J_\alpha} \, D^\alpha v^+_n(x)\big|_{x_1=0},
\end{array}
\right.
\end{equation}
where the function $f^+_k$ is defined in \eqref{f_k}.

It is easy to show that there exists a unique weak solution to the homogenized problem.
Moreover, taking into account the zero Dirichlet boundary conditions and the fact that $f_k^\pm\in C^\infty_0(\Omega^\pm)$, and using the established results on the smoothness of solutions to boundary-value problems for elliptic equations
with discontinuous coefficients (see \cite{Oleinik-1961,Shish-1961,Scheftel-1965}), we conclude that
$
v_0^\pm \in C^\infty\big(\overline{\Omega^\pm} \big).
$
Note that to apply these results to the corner points, we use the standard odd extension for the solution through the corresponding side
and properties of its derivatives (see Proposition~\ref{Prop-3-1}).
For example, to prove $C^\infty$-regularity at   the point $(0,0)$, we consider the relations
\begin{equation*}
\left\{
\begin{array}{lcl}
\Delta_x \tilde{v}^-_0(x) = \tilde{f}^-(x) \ \ \text{in} \ [- \varrho_0, 0]\times [-d, d], & &  \upharpoonleft\!\! Y\!\!\upharpoonright {h_{11}} \, \partial^2_{x_1}\tilde{v}^+_0(x) \  + \  \upharpoonleft\!\! Y\!\!\upharpoonright {h_{22}}\,  \partial^2_{x_2}\tilde{v}^+_0(x) = \ \upharpoonleft\!\! Y\!\!\upharpoonright \tilde{f}^+(x) \ \ \text{in} \ \ \tilde{\Omega}^+ ,
\\[2pt]
\tilde{v}^-_0(0,x_2)   = \tilde{v}^+_0(0,x_2) \ \ \text{on} \ \ \tilde{\mathcal{Z}}, & &
\partial_{x_1}\tilde{v}^-_0(x) =\  \upharpoonleft\!\! Y\!\!\upharpoonright  {h_{11}} \, \partial_{x_1}\tilde{v}^+_0(x) \ \ \text{on} \ \ \tilde{\mathcal{Z}},
\end{array}
\right.
\end{equation*}
where $\tilde{v}^\pm_0$ and $\tilde{f}^\pm$ are the odd extensions of  ${v}^\pm_0$ and ${f}^\pm,$ respectively,
\ the constant $\varrho_0$ is defined in Sect.~\ref{Sec-2}, \
$\tilde{\mathcal{Z}} := \{x\colon x_1=0, \ \  -d< x<d\},$ \ $\tilde{\Omega}^+ := \{ x\colon \ 0 < x_1 < d, \ \ -d < x_2 < d\}.$
Then one should apply Theorem 10 from \cite{Scheftel-1965}.

In the same way, based on results of \cite{Scheftel-1965}, the existence, uniqueness and smoothness of solutions to  problems  \eqref{rec1} or \eqref{rec2}  with non-zero transmission conditions  are substantiated.  Thus,
$
v_k^\pm \in C^\infty\big(\overline{\Omega^\pm} \big)
$
for any $k\in \Bbb N.$

\section{Justification and the main results}\label{Sect-4}

By using  series  \eqref{as-V}, \eqref{as-U}, \eqref{as-v+}  and \eqref{as-B}, we  determine the series
\begin{equation}\label{whole-series}
 \mathcal{A}^\infty_\varepsilon
:=
\left\{\begin{array}{ll}
                                  \displaystyle v^+_0(x) +  \sum_{k=1}^\infty\varepsilon^k \bigg(v^+_k(x) +  \sum_{n=0}^{k-1} \sum_{|\alpha|=k -n} N_\alpha(\tfrac{x}{\varepsilon}) \,  D^\alpha v^+_n(x) &
\\[10pt]
\hspace{2.5cm} \displaystyle
 + \chi_0(x_1) \sum_{n=0}^{k-1} \sum_{|\alpha|=k -n} B^+_\alpha(\tfrac{x}{\varepsilon}) \, D^\alpha v^+_n(x)\big|_{x_1=0} \bigg),& x \in \Omega^+_\varepsilon,
\\[10pt]
    \displaystyle
v^-_0(x) +  \sum_{k=1}^\infty\varepsilon^k \bigg(v^-_k(x) +  \chi_0(x_1) \sum_{n=0}^{k-1} \sum_{|\alpha|=k -n} B^-_\alpha(\tfrac{x}{\varepsilon}) \, D^\alpha v^+_n(x)\big|_{x_1=0}\bigg), & x \in \Omega^-,
                                     \end{array}
                                   \right.
\end{equation}
where $\chi_0$ is a  smooth cut-off function such that
\begin{equation}\label{cut-off}
  \chi_0(x_1) = \left\{
                \begin{array}{ll}
                  1, & \hbox{if} \ \  |x_1| \le \frac{\varrho_0}{2},
\\
                  0, & \hbox{if} \ \ |x_1| \ge \varrho_0.
                \end{array}
              \right.
\end{equation}
\begin{theorem}\label{Theorem-main}
 Series \eqref{whole-series} is an asymptotic expansion for the solution $u_\varepsilon$ to problem \eqref{original Problem} in the Sobolev space
$H^1(\Omega_\varepsilon)$
and for any $m\in\Bbb N,$ there exist positive constants $C_m$  and $\varepsilon_0$ such that for all $\varepsilon \in (0, \varepsilon_0)$
\begin{equation}\label{main-est}
  \left\|u_\varepsilon -  \mathcal{A}^{(m)}_\varepsilon \right\|_{H^1(\Omega_\varepsilon)} \le C_m\, \varepsilon^{m},
\end{equation}
where $\mathcal{A}^{(m)}_\varepsilon$ is the partial sum of $\mathcal{A}^\infty_\varepsilon.$
\end{theorem}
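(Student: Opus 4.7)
The plan is to follow the classical \emph{substitute and estimate} strategy: pick $M \geq m+1$ large enough to absorb the various losses of powers of $\varepsilon$ that the estimates entail, set $R_\varepsilon := u_\varepsilon - \mathcal{A}^{(M)}_\varepsilon$, and derive an energy estimate for $R_\varepsilon$. By the construction of the three nested recurrent procedures in Sect.~\ref{Sect_3}, all formal cancellations happen by design, so the complete list of residuals is explicit and of controlled order. In particular, by Propositions~\ref{Prop-3-2} and~\ref{Prop-3-6}, $\mathcal{A}^{(M)}_\varepsilon$ vanishes on $\partial\Omega\setminus\mathcal{Z}$, hence so does $R_\varepsilon$.

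First, I would compute the residuals explicitly. The cell problems \eqref{cell1}--\eqref{cell3} cancel the $\varepsilon^k$ terms in the Laplacian expansion of the two-scale part up to $k\le M-2$ and the $\varepsilon^k$ terms in the Neumann expansion on $\partial G_\varepsilon$ up to $k\le M-1$. Combined with the equations \eqref{v+0}--\eqref{v+k} for $\{v^+_k\}$ and \eqref{v_k-} for $\{v^-_k\}$, this leaves a bulk residual of the form $\Delta_x \mathcal{A}^{(M)}_\varepsilon - f = \varepsilon^{M-1} F^{(1)}_\varepsilon(x,\tfrac{x}{\varepsilon})$ with $F^{(1)}_\varepsilon$ uniformly bounded in $L^\infty$, and $\partial_{\vec{\nu}_\varepsilon}\mathcal{A}^{(M)}_\varepsilon = \varepsilon^M F^{(2)}_\varepsilon(x,\tfrac{x}{\varepsilon})$ on $\partial G_\varepsilon$. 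The inner-layer cell problems \eqref{Innner cell1}--\eqref{Innner cell3} guarantee that both transmission conditions across $\mathcal{Z}$ are satisfied up to the same order. Finally, the terms arising when derivatives fall on the cut-off $\chi_0$ are all supported in $\{\tfrac{\varrho_0}{2}\le |x_1|\le \varrho_0\}$, where $B^\pm_\alpha(\tfrac{x}{\varepsilon})$ is evaluated at $|\xi_1|\ge \tfrac{\varrho_0}{2\varepsilon}$; by the exponential decay established in Theorem~\ref{Th-3-1} these cut-off residuals are $\mathcal{O}(e^{-c/\varepsilon})$ and can be ignored.

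Next, I would upgrade $\mathcal{A}^{(M)}_\varepsilon$ to an $H^1(\Omega_\varepsilon)$-admissible approximation by a lifting: add a small corrector $\mathcal{L}_\varepsilon$ supported in a thin neighbourhood of $\mathcal{Z}$ that absorbs the $\mathcal{O}(\varepsilon^{M+1})$ jump of $\mathcal{A}^{(M)}_\varepsilon$ across $\mathcal{Z}$, with $\|\mathcal{L}_\varepsilon\|_{H^1(\Omega_\varepsilon)} = \mathcal{O}(\varepsilon^{M+1/2})$ by a standard trace-lifting construction. Then $\widetilde R_\varepsilon := u_\varepsilon - \mathcal{A}^{(M)}_\varepsilon - \mathcal{L}_\varepsilon \in H^1(\Omega_\varepsilon)$, vanishes on $\partial\Omega$, and solves a Poisson-type problem with right-hand side of order $\varepsilon^{M-1}$ in $L^2(\Omega_\varepsilon)$ and Neumann data of order $\varepsilon^M$ on $\partial G_\varepsilon$. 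Multiplying by $\widetilde R_\varepsilon$, integrating by parts, and using Friedrichs's inequality (uniform in $\varepsilon$ since $\partial\Omega$ itself does not move with $\varepsilon$) together with the uniform trace estimate $\|g\|_{L^2(\partial G_\varepsilon)}^2 \le C\bigl(\varepsilon^{-1}\|g\|_{L^2(\Omega^+_\varepsilon)}^2 + \varepsilon\|\nabla g\|_{L^2(\Omega^+_\varepsilon)}^2\bigr)$ obtained via the extension operator $\mathcal{P}$ introduced in Section~\ref{Sect_3}, I would conclude $\|\widetilde R_\varepsilon\|_{H^1(\Omega_\varepsilon)} = \mathcal{O}(\varepsilon^{M-1})$. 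Adding back $\mathcal{L}_\varepsilon$ and the tail $\mathcal{A}^{(M)}_\varepsilon - \mathcal{A}^{(m)}_\varepsilon$ (both of order $\varepsilon^m$ in $H^1$ once $M\ge m+2$), the triangle inequality yields \eqref{main-est}.

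The main obstacle I anticipate is the careful bookkeeping of residual orders, since the Laplacian loses two powers of $\varepsilon$ and the conormal derivative on $\partial G_\varepsilon$ loses one when acting on functions of the fast variable, while trace inequalities on the hole boundaries produce additional $\varepsilon^{-1/2}$-type losses. A subsidiary technical point is establishing these uniform Friedrichs and trace inequalities in the perforated domain, for which one extends across the holes by means of $\mathcal{P}$ and then invokes the corresponding classical Sobolev inequalities on the unperforated square, with constants depending only on $Y$ and $d$.
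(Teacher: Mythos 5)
Your overall strategy---substitute the partial sum into the problem, use the recursive cell problems to identify the residuals, then run an energy estimate---is the same as the paper's (the paper takes $M=2m$ explicitly and does not introduce a larger auxiliary index). However, there is a concrete misunderstanding of the construction that affects the interface part of your argument.

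You propose to add a trace-lifting $\mathcal{L}_\varepsilon$ to ``absorb the $\mathcal{O}(\varepsilon^{M+1})$ jump of $\mathcal{A}^{(M)}_\varepsilon$ across $\mathcal{Z}$.'' But the partial sum $\mathcal{A}^{(M)}_\varepsilon$ has \emph{no} jump in its value across $\mathcal{Z}$: continuity holds exactly, at every order, because the coefficients $q_\alpha$ defined by \eqref{tran3+} are pushed into the transmission conditions \eqref{tran3} imposed on $\{v_k^\pm\}$ in problems \eqref{rec1}--\eqref{rec2}, so that $N_\alpha|_{x_1=0}+[B_\alpha]_{x_1=0}=q_\alpha$ and $v_k^+(0,x_2)+\sum q_\alpha D^\alpha v_n^+|_{x_1=0}-v_k^-(0,x_2)=0$. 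This exact matching is precisely what the paper singles out as the novelty of the construction (the ``continuous asymptotic approximation'' stressed in the abstract), and it makes the lifting $\mathcal{L}_\varepsilon$ unnecessary; $\mathcal{A}^{(M)}_\varepsilon$ already belongs to $H^1(\Omega_\varepsilon)$.

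What does \emph{not} match exactly at $\mathcal{Z}$ is the normal derivative: $[\partial_{x_1}\mathcal{A}^{(M)}_\varepsilon]_{x_1=0}=\varepsilon^M\Psi^{(M)}_\varepsilon(x_2)$ with $\Psi^{(M)}_\varepsilon$ uniformly bounded. This residual is missing from your list. When you multiply the equation for $u_\varepsilon-\mathcal{A}^{(M)}_\varepsilon$ by the difference itself and integrate by parts, the resulting interface term $\varepsilon^M\int_{\mathcal{Z}}\Psi^{(M)}_\varepsilon\,(u_\varepsilon-\mathcal{A}^{(M)}_\varepsilon)\,dx_2$ must be estimated (via a trace inequality on $\mathcal{Z}$) alongside the bulk and $\partial G_\varepsilon$ contributions you do account for. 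Once you drop the spurious function-value jump, replace it with this normal-derivative residual, and keep your trace and extension-operator machinery, the argument closes and reproduces the paper's estimate.
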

\begin{remark}
  Hereinafter, all constants in inequalities are independent of the parameter $\varepsilon.$
\end{remark}
\begin{proof}
 {\bf 1.}
We first check that  for any $m\in \Bbb N$ the partial sum $\mathcal{A}^{(m)}_\varepsilon \in H^1(\Omega_\varepsilon):$
\begin{align*}
\big[ \mathcal{A}^{(m)}_\varepsilon \big]_{x_1=0}  & = \mathcal{A}^{(m)}_\varepsilon\big|_{x=+0} - \mathcal{A}^{(m)}_\varepsilon\big|_{x=-0}
\\
   & =  \sum_{k=1}^{m}\bigg(
v^+_k(0,x_2) +  \sum_{n=0}^{k-1} \sum_{|\alpha|=k -n} \Big(N_\alpha|_{x_1=0}  +
\big[B_\alpha \big]_{x_1=0}\Big) \,
 D^\alpha v^+_n(x)\big|_{x_1=0} - v^-_k(0,x_2)\bigg)
\\
  &\stackrel{\eqref{tran3+}}{=} \sum_{k=1}^{m}\bigg(
v^+_k(0,x_2) +  \sum_{n=0}^{k-1} \sum_{|\alpha|=k -n} {q_\alpha} \,
 D^\alpha v^+_n(x)\big|_{x_1=0} - v^-_k(0,x_2)\bigg) \stackrel{\eqref{tran3}}{=} 0 \quad \text{on} \ \ {\mathcal{Z}} .
\end{align*}

Using \eqref{tran-cond-2} and  \eqref{tran-cond-3}, we find
$$
\big[\partial_{x_1}\mathcal{A}^{(m)}_\varepsilon \big]_{x_1=0}
  = \varepsilon^{m} \, \Psi^{(m)}_\varepsilon(x_2),
$$
where
\begin{equation}\label{res1}
  \displaystyle \sup_{x_2 \in {\mathcal{Z}}} |\Psi^{(m)}_\varepsilon(x_2)| \le \check{C}_m.
\end{equation}

Due to Propositions~\ref{Prop-3-2} and \ref{Prop-3-6} and the boundary conditions in  \eqref{Homo-problem} - \eqref{rec2} we have
$
\mathcal{A}^{(m)}_\varepsilon|_{\partial\Omega} =0.
$

{\bf 2.} In this step of the proof we find the remainders that  the partial sum $\mathcal{A}^{(2m)}_\varepsilon$ leaves in the differential equation and the Neumann boundary conditions of problem \eqref{original Problem}.

  In $\Omega^+_\varepsilon$ this partial sum can be rewritten as follows
$$
\mathcal{A}^{(2m)}_\varepsilon(x) = U^{(m-1)}_\varepsilon(x) + \chi_0(x_1) \, B^{(m-1)}_\varepsilon(x)  + \varepsilon^m v^+_m(x) + \varepsilon^{m+1} \, R^{(m+1)}_\varepsilon(x),
$$
where
$$
U^{(m-1)}_\varepsilon(x) = \sum_{k=0}^{m+1}\varepsilon^k\sum_{|\alpha|=k}{N_\alpha(\tfrac{x}{\varepsilon}) \, D^\alpha v^{(m-1)}_\varepsilon(x)},
\quad  \ \  v^{(m-1)}_\varepsilon(x) = \sum_{n=0}^{m-1}\varepsilon^n v^+_n(x),
 $$
$$
B^{(m-1)}_\varepsilon(x)  = \sum_{k=1}^{m+1}\varepsilon^k\sum_{|\alpha|=k} B^\pm_\alpha(\tfrac{x}{\varepsilon}) \, D^\alpha v^{(m-1)}_\varepsilon(x)|_{x_1=0},
$$
and $\varepsilon^{m+1} \, R^{(m+1)}_\varepsilon$ is the sum of the remaining members.

Using the calculations performed in \S~\ref{subsect-3-1}, we find
\begin{align*}
  \Delta_{x}U^{(m-1)}_\varepsilon  - f^+ =& \ \sum_{k=0}^{m-1}\varepsilon^{k}\sum_{|\alpha|=k+2}\Big(\Delta_{\xi}N_\alpha + 2 \partial_{\xi_{\alpha_1}}N_{\alpha_2 \alpha_3\ldots\alpha_k}
+ \delta_{\alpha_1, \alpha_2} N_{\alpha_3\ldots\alpha_k}\Big)\Big|_{\xi =\frac{x}{\varepsilon}} \, D^\alpha  v^{(m-1)}_\varepsilon - f^+
\\
&\ \ +  \varepsilon^{m}\sum_{|\alpha|=m}\Big(2 \partial_{\xi_{\alpha_1}}N_{\alpha_2 \alpha_3\ldots\alpha_{m}}
+ \delta_{\alpha_1, \alpha_2} N_{\alpha_3\ldots\alpha_{m}}\Big)\Big|_{\xi =\frac{x}{\varepsilon}} \, D^\alpha  v^{(m-1)}_\varepsilon(x)
\\
& \ \ +
\varepsilon^{m+1}\sum_{|\alpha|=m+1} \delta_{\alpha_1, \alpha_2} N_{\alpha_3\ldots\alpha_{m+1}}\big|_{\xi =\frac{x}{\varepsilon}} \, D^\alpha  v^{(m-1)}_\varepsilon(x)
\\
 \stackrel{\eqref{cell3}}{=}  & \ \sum_{k=0}^{m-1}\varepsilon^{k}\sum_{n=0}^{m-1}\varepsilon^n \sum_{|\alpha|=k+2}h_\alpha  \, D^\alpha  v^+_n(x) - f^+(x)  + \mathcal{O}(\varepsilon^{m})
\\
 \stackrel{\eqref{v+k}}{=}
 & \ \varepsilon^{m} \, R^{(m)}_{1, \, \varepsilon}(x), \quad x \in \Omega^+_\varepsilon,
\end{align*}
 where
\begin{equation}\label{res2}
\sup_{x\in \Omega^+_\varepsilon} |R^{(m)}_{1, \, \varepsilon}(x)| \le \hat{C}_m .
\end{equation}
It is also easy to check that
\begin{align*}
\partial_{\vec{\nu}_\varepsilon}U^{(m-1)}_\varepsilon(x) & =  \sum_{k=1}^{m}\varepsilon^{k}\sum_{|\alpha|=k+1}\Big(\partial_{\vec{\nu}_\xi}N_\alpha + \nu_{\alpha_1}\,
N_{\alpha_2\ldots\alpha_{k+1}}\Big)\big|_{\xi =\frac{x}{\varepsilon}}\, D^\alpha v_\varepsilon^{(m-1)}(x)
\\
& \quad \ + \
\varepsilon^{m+1}\sum_{|\alpha|=m+2}\nu_{\alpha_1}\,
N_{\alpha_2\ldots\alpha_{m+2}}\big|_{\xi =\frac{x}{\varepsilon}}\, D^\alpha v_\varepsilon^{(m-1)}(x)
\\
& \stackrel{\eqref{cell3}}{=} \varepsilon^{m+1}\sum_{|\alpha|=m+1}\nu_{\alpha_1}\,
N_{\alpha_2\ldots\alpha_{m+3}}\big|_{\xi =\frac{x}{\varepsilon}}\, D^\alpha v_\varepsilon^{(m-1)}(x)
 =: \varepsilon^{m+1} R^{(m)}_{2, \, \varepsilon}(x)
 \ \quad x \in  \partial G_\varepsilon,
\end{align*}
where
\begin{equation}\label{res3}
\sup_{x\in \partial G_\varepsilon} |R^{(m)}_{2, \, \varepsilon}(x)| \le \breve{C}_m.
\end{equation}

In the same way, but now using \eqref{Innner cell1} - \eqref{Innner cell3} and taking into account the exponential decrease of the coefficients  $\{B_\alpha\},$ we find
$$
\Delta_{x}\big(\chi_0 \,B^{(m-1)}_\varepsilon\big) = \varepsilon^{m} \, \mathcal{R}^{(m)}_{1, \, \varepsilon} \ \ \text{in} \ \ \Omega^- \cup \Omega^+_\varepsilon,
\qquad
\partial_{\vec{\nu}_\varepsilon}\big(\chi_0 \,B^{(m-1)}_\varepsilon\big) = \varepsilon^{m+1} \mathcal{R}^{(m)}_{2, \, \varepsilon} \ \ \text{on} \ \  \partial G_\varepsilon,
$$
where
\begin{equation}\label{res4}
 \sup_{x\in \Omega^- \cup \, \Omega^+_\varepsilon} |\mathcal{R}^{(m)}_{1, \, \varepsilon}(x)| +  \sup_{x\in \partial G_\varepsilon} |\mathcal{R}^{(m)}_{2, \, \varepsilon}(x)| \le \tilde{C}_m .
\end{equation}

Thus, for any $m\in\Bbb N, $ the difference \ $ \mathcal{A}^{(2m)}_\varepsilon -  u_\varepsilon$  satisfies relations
\begin{equation}\label{difference}
\left\{
\begin{array}{ll}
\Delta_x\big(\mathcal{A}^{(2m)}_\varepsilon -  u_\varepsilon\big) =  \varepsilon^{m}  \mathcal{R}^{-(m)}_{1, \, \varepsilon} \ \ \text{in} \ \Omega^- , &
\Delta_x\big(\mathcal{A}^{(2m)}_\varepsilon -  u_\varepsilon\big) = \varepsilon^{m} \, R^{(m)}_{1, \, \varepsilon} +
\varepsilon^{m} \, \mathcal{R}^{+(m)}_{1, \, \varepsilon}
  \ \text{in} \ \Omega^+_\varepsilon
\\[2pt]
&  \partial_{\vec{\nu}_\varepsilon}\big(\mathcal{A}^{(2m)}_\varepsilon -  u_\varepsilon\big) = \varepsilon^{m+1} R^{(m)}_{2, \, \varepsilon} \ \ \text{on} \ \ \partial G_\varepsilon,
\\[4pt]
\big[\mathcal{A}^{(2m)}_\varepsilon - u_\varepsilon \big]_{x_1=0}
 = 0 \ \ \text{on} \ \ {\mathcal{Z}},
&  \big[\partial_{x_1}\big(\mathcal{A}^{(2m)}_\varepsilon -  u_\varepsilon\big)\big]_{x_1=0} = \varepsilon^{2m} \Psi^{(2m)}_\varepsilon(x_2) \ \ \text{on} \ \ {\mathcal{Z}},
\\[4pt]
\mathcal{A}^{(2m)}_\varepsilon -  u_\varepsilon=0  \ \ \text{on} \ \ \partial\Omega.
\end{array}
\right.
\end{equation}
Multiplying the differential equations in \eqref{difference} by $ \mathcal{A}^{(2m)}_\varepsilon -  u_\varepsilon,$ then integrating by parts and using \eqref{res1} - \eqref{res4}, we deduce the inequality
$$
\left\| \mathcal{A}^{(2m)}_\varepsilon - u_\varepsilon \right\|_{H^1(\Omega_\varepsilon)} \le \ddot{C}_m\, \varepsilon^{m},
$$
whence follows  inequality~\eqref{main-est}.
\end{proof}

In the context of applied problems, it is evident that there is no necessity to construct a complete asymptotic expansion for the solution. Instead, it suffices to employ an approximation of the solution that meets the requisite level of accuracy.  Therefore, weaker assumptions on the smoothness of the given functions can be considered. For example, if $m=1$ in \eqref{main-est}, then we should construct the approximation $\mathcal{A}^{(2)}_\varepsilon;$
thus, it suffices that $f^- \in C^1_0(\Omega^-)$ and $f^+ \in C^5_0(\Omega^+).$ The following statement holds.

\begin{corollary}
Let   $f^- \in C^1_0(\Omega^-)$ and $f^+ \in C^5_0(\Omega^+).$ Then
\begin{equation}\label{two-term est}
  \left\|u_\varepsilon -  \mathcal{A}^{(1)}_\varepsilon \right\|_{H^1(\Omega_\varepsilon)} \le C_1\, \varepsilon,
\end{equation}
where
\begin{equation*}
 \mathcal{A}^{(1)}_\varepsilon =
\left\{\begin{array}{ll}
                                  \displaystyle v^+_0(x) + \varepsilon \, \Big(v^+_1(x) +  N_1(\tfrac{x}{\varepsilon}) \, \partial_{x_1}v^+_0(x) +
N_2(\tfrac{x}{\varepsilon}) \, \partial_{x_2}v^+_0(x) &
\\
\hspace{2.7cm} + \ \chi_0(x_1) \, B^+_1(\tfrac{x}{\varepsilon}) \, \partial_{x_1}v^+_0(x)\big|_{x_1=0} + \chi_0(x_1) B^+_2(\tfrac{x}{\varepsilon}) \, \partial_{x_2}v^+_0(x)\big|_{x_1=0} \Big),& x \in \Omega^+_\varepsilon,
\\[10pt]
    \displaystyle
v^-_0(x) +  \varepsilon \left(v^-_1(x) +
\chi_0(x_1)\, B^-_1(\tfrac{x}{\varepsilon}) \, \partial_{x_1}v^+_0(x)\big|_{x_1=0} + \chi_0(x_1) \, B^-_2(\tfrac{x}{\varepsilon}) \, \partial_{x_2}v^+_0(x)\big|_{x_1=0}
\right), & x \in \Omega^-.
                                     \end{array}
                                   \right.
\end{equation*}
\end{corollary}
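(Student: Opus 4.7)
The plan is to imitate the proof of Theorem~\ref{Theorem-main} for the specific case $m=1$, whose conclusion already reads $\|u_\varepsilon-\mathcal{A}^{(1)}_\varepsilon\|_{H^1(\Omega_\varepsilon)}\le C\varepsilon$; the only new task is to verify that the weaker smoothness hypotheses of the corollary still suffice at every step. The $C^\infty_0$ assumption on the data in Theorem~\ref{Theorem-main} enters exclusively through the pointwise residual bounds (\ref{res1})--(\ref{res4}): the cell correctors $N_\alpha$ and the inner-layer correctors $B^\pm_\alpha$ are determined by the cell geometry alone, so the whole question reduces to producing enough derivatives of the macroscopic coefficients $\{v^\pm_n\}$.

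Inspecting the computation of $\Delta_x\mathcal{A}^{(2)}_\varepsilon-f$, $\partial_{\vec{\nu}_\varepsilon}\mathcal{A}^{(2)}_\varepsilon$, and $[\partial_{x_1}\mathcal{A}^{(2)}_\varepsilon]_{x_1=0}$ that produces (\ref{res1})--(\ref{res4}) with $m=1$, and using the decomposition $\mathcal{A}^{(2)}_\varepsilon=U^{(0)}+\chi_0 B^{(0)}+\varepsilon v^+_1+\varepsilon^2 R^{(2)}_\varepsilon$ valid in $\Omega^+_\varepsilon$, a direct derivative count shows that the residuals are controlled uniformly in $\varepsilon$ as soon as $v^+_0\in C^4(\overline{\Omega^+})$, $v^+_1\in C^3(\overline{\Omega^+})$, $v^+_2\in C^2(\overline{\Omega^+})$, and $v^-_0,v^-_1\in C^2(\overline{\Omega^-})$. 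The elliptic regularity machinery already assembled in Section~\ref{par-3-3}---the Oleinik--Sheftel theory for discontinuous-coefficient transmission problems together with the odd-reflection trick at the corners of $\Omega^+$---applied to $f^+\in C^5_0(\Omega^+)$ yields $v^+_0\in C^7(\overline{\Omega^+})$; the source $f^+_1$ of the equation for $v^+_1$ vanishes by (\ref{h-1}), so $v^+_1$ is driven only by the $C^6$ transmission data inherited from $v^+_0$, giving $v^+_1\in C^6(\overline{\Omega^+})$; and $f^+_2$ is $C^3$ in view of (\ref{h-1}) and (\ref{f_k}), yielding $v^+_2\in C^5(\overline{\Omega^+})$. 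On the minus side, $f^-\in C^1_0(\Omega^-)$ yields $v^-_0\in C^3(\overline{\Omega^-})$, while $v^-_1$ inherits high regularity from its coupling to $v^+_0|_{\mathcal{Z}}$, so all pointwise bounds needed in (\ref{res1})--(\ref{res4}) are available.

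With the coefficients in sufficiently regular spaces, the remainder of the proof of Theorem~\ref{Theorem-main} after (\ref{res4}) transcribes verbatim: the difference $w_\varepsilon:=u_\varepsilon-\mathcal{A}^{(2)}_\varepsilon$ satisfies a Poisson equation with $L^2(\Omega_\varepsilon)$-residual of order $\varepsilon$, Neumann data on $\partial G_\varepsilon$ of order $\varepsilon^2$, zero Dirichlet trace on $\partial\Omega$, continuous trace across $\mathcal{Z}$, and $O(\varepsilon^2)$-jump of $\partial_{x_1}w_\varepsilon$ across $\mathcal{Z}$; testing with $w_\varepsilon$ and integrating by parts then gives $\|w_\varepsilon\|_{H^1(\Omega_\varepsilon)}\le C\varepsilon$. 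Comparing $\mathcal{A}^{(2)}_\varepsilon$ with $\mathcal{A}^{(1)}_\varepsilon$, their difference is precisely the collection of $\varepsilon^2$-order terms in (\ref{whole-series}); each such term is pointwise $O(\varepsilon^2)$ in $\Omega_\varepsilon$, and because differentiation introduces an $\varepsilon^{-1}\nabla_\xi$ factor whenever it hits $N_\alpha(x/\varepsilon)$ or $B^\pm_\alpha(x/\varepsilon)$, its gradient has $L^2(\Omega_\varepsilon)$-norm $O(\varepsilon)$. The triangle inequality then delivers (\ref{two-term est}).

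The principal obstacle is the regularity bookkeeping for $\{v^\pm_n\}$ at the corners of $\Omega^+$ and across the transmission interface $\mathcal{Z}$, where the defining problems have discontinuous coefficients and inhomogeneous jump data; this step relies essentially on the odd-extension construction and the Oleinik--Sheftel-type theorems already invoked in Section~\ref{par-3-3}. The asymmetric hypothesis $f^-\in C^1_0(\Omega^-)$ versus $f^+\in C^5_0(\Omega^+)$ is explained by the fact that the two-scale corrector on the perforated side consumes several additional derivatives of $v^+_0$ through the chain rule, whereas on the unperforated side the corresponding corrector is simply $\varepsilon v^-_1$ and demands far less smoothness of $f^-$.
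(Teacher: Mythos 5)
Your proposal is correct and follows exactly the route the paper implicitly indicates (the text before the corollary says to take $m=1$ in Theorem~\ref{Theorem-main}, build $\mathcal{A}^{(2)}_\varepsilon$, and track how much smoothness of $f^\pm$ this costs); the energy estimate for $u_\varepsilon-\mathcal{A}^{(2)}_\varepsilon$ followed by the observation that $\mathcal{A}^{(2)}_\varepsilon-\mathcal{A}^{(1)}_\varepsilon$ has $H^1$-norm $O(\varepsilon)$ is the intended argument. Two of your regularity tallies are slightly optimistic by a Schauder-type $\mu$ (e.g.\ $f^+\in C^5_0$ gives $v^+_0\in C^{6,\mu}$ rather than $C^7$, and $f^-\in C^1_0$ gives $v^-_0\in C^{2,1}$ rather than $C^3$), but since the requirement you identify is only $v^+_0\in C^4$, $v^+_1\in C^3$, $v^+_2\in C^2$, $v^-_0,v^-_1\in C^2$, the margin absorbs this and the conclusion stands.
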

The following inequalities follow from \eqref{two-term est}.
\begin{corollary}
\begin{gather}
  \left\|u_\varepsilon -  v_0\right\|_{L^2(\Omega_\varepsilon)} \le C_1\, \varepsilon,
 \\[4pt]
  \left\|\nabla u_\varepsilon - \begin{pmatrix}
  (1+\partial_{\xi_1}N_1) & \partial_{\xi_1}N_2
\\[2pt] \label{grad1}
 \partial_{\xi_2}N_1  & (1+\partial_{\xi_2}N_2)
\end{pmatrix}
\nabla v^+_0
- \chi_0\begin{pmatrix}
  \partial_{\xi_1}B^+_1 & \partial_{\xi_1}B^+_2
\\[2pt]
 \partial_{\xi_2}B^+_1  & \partial_{\xi_2}B^+_2
\end{pmatrix}
\nabla v^+_0|_{x_1=0}
 \right\|_{L^2(\Omega^+_\varepsilon)} \le C_2\, \varepsilon,
\\[4pt] \label{grad2}
\left\|\nabla u_\varepsilon - \nabla v^-_0
- \chi_0 \begin{pmatrix}
  \partial_{\xi_1}B^-_1 & \partial_{\xi_1}B^-_2
\\[2pt]
 \partial_{\xi_2}B^-_1  & \partial_{\xi_2}B^-_2
\end{pmatrix}
\nabla v^+_0|_{x_1=0}
 \right\|_{L^2(\Omega^-)} \le C_2\, \varepsilon,
\end{gather}
where $u_\varepsilon$ is the solution to problem \eqref{original Problem}, $v_0$ is the solution to the homogenized problem \eqref{Homo-problem},
$N_1$ and $N_2$ are the solutions to problems \eqref{cell1} and \eqref{cell2}, respectively, and $B_1$ and $B_2$ are the solutions to problems \eqref{Innner cell1} and \eqref{Innner cell2}, respectively.
\end{corollary}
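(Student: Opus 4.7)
The plan is to derive all three bounds from the already-established estimate \eqref{two-term est} by the triangle inequality, once the leading pointwise expansions of $\mathcal{A}^{(1)}_\varepsilon$ and of $\nabla\mathcal{A}^{(1)}_\varepsilon$ are identified. For the $L^2$-estimate we split $u_\varepsilon - v_0 = (u_\varepsilon - \mathcal{A}^{(1)}_\varepsilon) + (\mathcal{A}^{(1)}_\varepsilon - v_0)$; the first summand is bounded in $L^2(\Omega_\varepsilon)$ by $C_1\varepsilon$ via \eqref{two-term est}, since the $H^1$-norm dominates the $L^2$-norm. Inspecting the explicit formula for $\mathcal{A}^{(1)}_\varepsilon$, every term of $\mathcal{A}^{(1)}_\varepsilon - v_0$ has the form $\varepsilon\, v^\pm_1(x)$, $\varepsilon\, N_j(x/\varepsilon)\,\partial_{x_j}v^+_0(x)$, or $\varepsilon\,\chi_0(x_1)\,B^\pm_j(x/\varepsilon)\,\partial_{x_j}v^+_0(x)|_{x_1=0}$ for $j \in \{1,2\}$. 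Because $v^\pm_k \in C^\infty(\overline{\Omega^\pm})$, $N_j$ is bounded on $\overline{Y}$ (extended periodically), and $B^\pm_j$ is bounded on $\overline{\Upsilon^\pm}$ with exponential decay as $|\xi_1|\to\infty$, each such summand is $O(\varepsilon)$ uniformly in $x$; integration over the uniformly bounded set $\Omega_\varepsilon$ then yields the desired $L^2$-bound.

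For the two gradient estimates the same decomposition is employed, with $\|\nabla u_\varepsilon - \nabla\mathcal{A}^{(1)}_\varepsilon\|_{L^2}$ again controlled by \eqref{two-term est}. The crucial ingredient is the chain-rule identity $\nabla_x[\varepsilon\, M(x/\varepsilon)\, g(x)] = (\nabla_\xi M)(x/\varepsilon)\, g(x) + \varepsilon\, M(x/\varepsilon)\, \nabla_x g(x)$, applied to each corrector in $\mathcal{A}^{(1)}_\varepsilon$. In $\Omega^+_\varepsilon$ this isolates the $O(1)$ part of $\nabla\mathcal{A}^{(1)}_\varepsilon$ as $\nabla v^+_0 + (\partial_{x_1}v^+_0)\,\nabla_\xi N_1 + (\partial_{x_2}v^+_0)\,\nabla_\xi N_2 + \chi_0\bigl[(\partial_{x_1}v^+_0|_{x_1=0})\,\nabla_\xi B^+_1 + (\partial_{x_2}v^+_0|_{x_1=0})\,\nabla_\xi B^+_2\bigr]$, which a direct check shows coincides with the matrix--vector expression appearing in \eqref{grad1}. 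The residual pieces of $\nabla\mathcal{A}^{(1)}_\varepsilon$ are $\varepsilon\,\nabla v^+_1$, $\varepsilon\,N_j(x/\varepsilon)\,\nabla(\partial_{x_j}v^+_0)$, the tangential correction $\varepsilon\,\chi_0(x_1)\,B^+_j(x/\varepsilon)\,\partial_{x_2}(\partial_{x_j}v^+_0|_{x_1=0})$ acting in the $x_2$-direction, and the cut-off product-rule term $\varepsilon\,\chi_0'(x_1)\,B^+_j(x/\varepsilon)\,\partial_{x_j}v^+_0|_{x_1=0}$ in the $x_1$-direction; each is $O(\varepsilon)$ in $L^\infty(\Omega^+_\varepsilon)$, hence $O(\varepsilon)$ in $L^2(\Omega^+_\varepsilon)$. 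The verification of \eqref{grad2} follows the same template on $\Omega^-$, where no $N_\alpha$ correctors are present, which replaces the $I + \nabla_\xi N$ block by the identity matrix in front of $\nabla v^-_0$.

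No substantial obstacle arises here: the corollary reduces to the triangle inequality plus sup-norm bounds on explicit correctors. The only point meriting a brief remark is the product-rule term containing $\chi_0'$, whose support lies in $\{\varrho_0/2 \le |x_1|\le \varrho_0\}$; there $|B^\pm_j(x/\varepsilon)| = O(\exp(-\delta\varrho_0/(2\varepsilon)))$ by the exponential decay of the inner layers, so this contribution is in fact far smaller than the $O(\varepsilon)$ budget and is trivially absorbed.
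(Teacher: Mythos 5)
Your proof is correct and is precisely the argument the paper has in mind: the paper merely states that the corollary ``follows from \eqref{two-term est},'' and your write-up supplies the triangle-inequality decomposition plus the chain-rule identification of the $O(1)$ part of $\nabla\mathcal{A}^{(1)}_\varepsilon$ that the paper leaves to the reader. The remark about the $\chi_0'$-term being exponentially small on $\{\varrho_0/2\le|x_1|\le\varrho_0\}$ is a nice clarification, though unnecessary for the stated $O(\varepsilon)$ bound.
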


Taking into account the exponential decrease of $B_1$ and $B_2,$ we derive the following inequalities from~\eqref{two-term est}.
\begin{corollary}\label{Cor-4-3}
For any domain $Q\subset \Omega^-$ such that $dist(\partial Q, {\mathcal{Z}}) >0,$ we have
$$
\left\|u_\varepsilon - v^-_0 \right\|_{H^1(Q)} \le C_1\, \varepsilon;
$$
and for any domain $Q\subset \Omega^+$ such that $dist(\partial Q, {\mathcal{Z}}) >0:$
$$
\left\|u_\varepsilon - v^+_0 - \varepsilon \sum_{i=1}^{2}N_i(\tfrac{x}{\varepsilon}) \, \partial_{x_i}v^+_0 \right\|_{H^1(Q\, \cap \,  \Omega^+_\varepsilon)} \le C_1\, \varepsilon.
$$
\end{corollary}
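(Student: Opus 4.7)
The strategy is to combine the global estimate $\|u_\varepsilon - \mathcal{A}^{(1)}_\varepsilon\|_{H^1(\Omega_\varepsilon)} \le C_1\varepsilon$ from the preceding corollary with a direct term-by-term analysis of $\mathcal{A}^{(1)}_\varepsilon$ restricted to $Q$. The key point is that, by Theorem~\ref{Th-3-1} and the construction of Subsection on inner-layer asymptotics, the correctors $B_i^\pm(\xi)$ together with $\nabla_\xi B_i^\pm$ decay like $\mathcal{O}(e^{-\delta|\xi_1|})$ as $|\xi_1|\to\infty$. Hence on any $Q$ with $\mathrm{dist}(\partial Q,\mathcal{Z})\ge c>0$, the inner-layer contributions are smaller than any power of $\varepsilon$.

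For $Q\subset\Omega^-$, I would decompose
\begin{equation*}
u_\varepsilon - v_0^- = \bigl(u_\varepsilon - \mathcal{A}^{(1)}_\varepsilon\bigr) + \varepsilon v_1^-(x) + \varepsilon \chi_0(x_1)\sum_{i=1}^{2} B_i^-\!\bigl(\tfrac{x}{\varepsilon}\bigr)\,\partial_{x_i}v_0^+(x)\big|_{x_1=0}.
\end{equation*}
The $H^1(Q)$-norm of the first summand is bounded by $C_1\varepsilon$ by restricting \eqref{two-term est} to $Q$. The second summand is $\mathcal{O}(\varepsilon)$ in $H^1(Q)$ because $v_1^-\in C^\infty(\overline{\Omega^-})$, established in Subsection~\ref{par-3-3}. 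For the third summand, $|x_1|\ge c$ on $Q$, and applying the chain rule the $x$-gradient of $B_i^-(x/\varepsilon)$ is of size $C\varepsilon^{-1}e^{-\delta c/\varepsilon}$; this is $o(\varepsilon^N)$ for every $N\in\Bbb N$, so this contribution is negligible in $H^1(Q)$. This yields the first inequality.

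For the second inequality, with $Q\subset\Omega^+$ and $\mathrm{dist}(\partial Q,\mathcal{Z})\ge c>0$, I would split, on $Q\cap\Omega^+_\varepsilon$,
\begin{equation*}
u_\varepsilon - v_0^+ - \varepsilon\sum_{i=1}^{2} N_i\!\bigl(\tfrac{x}{\varepsilon}\bigr)\partial_{x_i}v_0^+ = \bigl(u_\varepsilon - \mathcal{A}^{(1)}_\varepsilon\bigr) + \varepsilon v_1^+(x) + \varepsilon\chi_0(x_1)\sum_{i=1}^{2} B_i^+\!\bigl(\tfrac{x}{\varepsilon}\bigr)\partial_{x_i}v_0^+(x)\big|_{x_1=0},
\end{equation*}
and control each of the three terms exactly as above: the global estimate handles the first, smoothness of $v_1^+$ handles the second, and exponential decay of $B_i^+$ as $\xi_1\to+\infty$ handles the third. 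The bulk correctors $N_i(x/\varepsilon)\partial_{x_i}v_0^+$ are kept in the approximation, since they are only $L^\infty$-bounded and cannot be absorbed into an $\mathcal{O}(\varepsilon)$ error in $H^1$.

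The only mildly delicate point, and essentially the sole technical one, is that chain-rule differentiation of $\chi_0(x_1)B_i^\pm(x/\varepsilon)$ produces a singular factor $\varepsilon^{-1}$ together with a contribution from $\chi_0'(x_1)$. Both are immediately dominated by the exponential factor $e^{-\delta c/\varepsilon}$ coming from $\mathrm{dist}(Q,\mathcal{Z})>0$, so no genuine obstruction arises; the argument reduces to the global estimate \eqref{two-term est} plus a pointwise exponential-decay bound on the inner-layer profiles.
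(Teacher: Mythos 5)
Your argument is correct and reproduces the paper's own reasoning: the paper explicitly derives Corollary~\ref{Cor-4-3} from the two-term estimate \eqref{two-term est} together with the exponential decay of $B_1,B_2$, which is exactly the decomposition you use, with the $\varepsilon v_1^\pm$ contributions absorbed into the $\mathcal{O}(\varepsilon)$ bound and the inner-layer terms made $o(\varepsilon^N)$ by $\mathrm{dist}(\overline{Q},\mathcal{Z})>0$.
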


\section{Rapidly oscillating interface}\label{Sect-5}

In addition to the notation  in Sect.~\ref{Sec-2}, we introduce a function $\ell \in C^4(\Bbb R),$ which  is $1$-periodic, even, non-positive and $\ell(0) = 0.$
Clearly, the function $\ell(\frac{x_2}{\varepsilon}), \ x_2 \in \Bbb R,$ is $\varepsilon$-periodic. The curve
$x_1= \varepsilon \,  \ell(\tfrac{x_2}{\varepsilon}),$ $ x_2 \in [0, d],$
represents an oscillating interface  dividing the domain $\Omega $ into two subdomains
$$
\Omega^-_\varepsilon := \left\{x\in \Omega \colon \  x_1 < \varepsilon \,  \ell(\tfrac{x_2}{\varepsilon})\right\} \quad \text{and} \quad
 \Omega^{+,\varepsilon} := \left\{x\in \Omega \colon \  x_1 > \varepsilon \,  \ell(\tfrac{x_2}{\varepsilon})\right\}.
 $$
\begin{figure}[htbp]
  \centering
  \includegraphics[width=7cm]{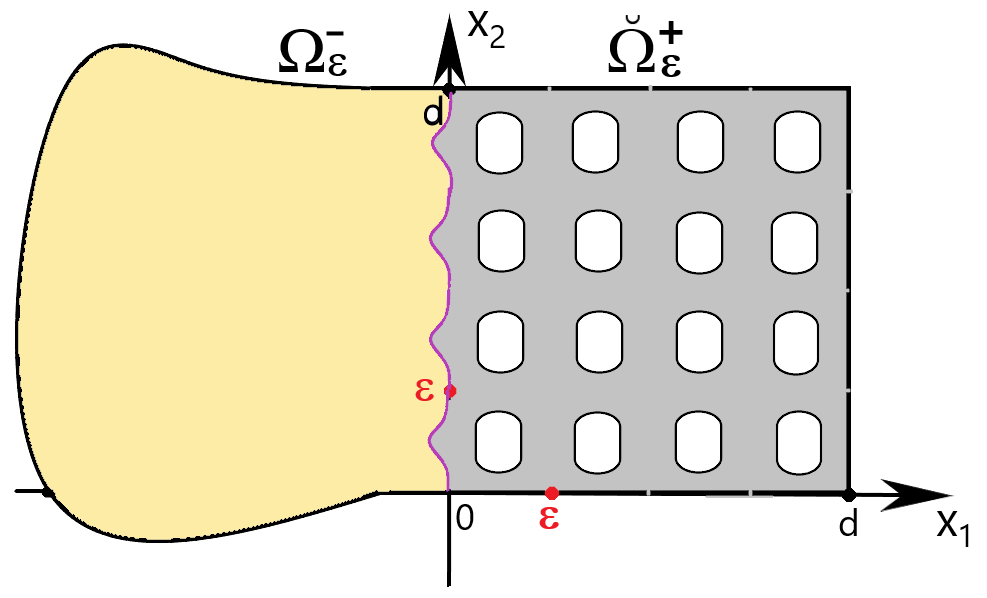}
  \caption{The partial perforated domain  $\Omega_\varepsilon$}\label{f2}
   \end{figure}
Then the perforated part is  $\breve{\Omega}^+_\varepsilon:= \Omega^{+,\varepsilon} \cap  \mathcal{Y}_{{\varepsilon}},$ and
partially perforated domain is defined as follows
$$
\Omega_\varepsilon := \Omega^-_\varepsilon   \cup \Lambda_\varepsilon  \cup \breve{\Omega}^+_{{\varepsilon}},
$$
where
$
\Lambda_\varepsilon := \left\{x\colon \ x_1= \varepsilon \,  \ell(\tfrac{x_2}{\varepsilon}), \quad x_2 \in [0, d]\right\}
$
is  the oscillating interface  (see Fig.~\ref{f2}). We assume that $\Lambda_\varepsilon$ does not intersects the hole boundaries~ $\partial G_\varepsilon.$

In $\Omega_\varepsilon$ we now consider the problem
\begin{equation}\label{new-problem}
\left\{
\begin{array}{lcl}
D^- \Delta_x u^-_{\varepsilon}(x) = f^-(x) \ \ \text{in} \ \ \Omega^-_\varepsilon, & &  \Delta_{ x}u^+_{\varepsilon}(x) = f^+(x) \ \ \text{in} \ \ \breve{\Omega}^+_{{\varepsilon}},
\\[2pt]
u^-_{\varepsilon}=0 \ \ \text{on} \ \ \Gamma^-,& &
u^+_{\varepsilon} = 0  \ \ \text{on} \ \ \Gamma^+,
\\[2pt]
 & & \nabla_x u^+_{\varepsilon} \cdot \vec{\nu}_\varepsilon =0 \ \ \text{on} \ \ \partial G_{\varepsilon},
\\[2pt]
u^-_{\varepsilon}   = u^+_{\varepsilon} \ \ \text{on} \ \ \Lambda_\varepsilon, & &
\\[2pt]
D^- \nabla_x u^-_{\varepsilon} \cdot \vec{\nu}_\varepsilon = \nabla_x u^+_{\varepsilon} \cdot \vec{\nu}_\varepsilon  \ \ \text{on} \ \ \Lambda_\varepsilon, & &
\end{array}
\right.
\end{equation}
where $D^-$ is a positive constant and the  unit normal to the interface $\Lambda_\varepsilon$ is defined as follows
\begin{equation}\label{normal}
  \vec{\nu}_\varepsilon :=  \left(\frac{1}{\sqrt{1+ |\ell^\prime(\xi_2)|^2}}, \ - \frac{\ell^\prime(\xi_2)}{\sqrt{1+ |\ell^\prime(\xi_2)|^2}}  \right)\bigg|_{\xi_2 = \frac{x_2}{\varepsilon}}, \quad x_2\in (0,d),
\end{equation}
(it is an outward normal with respect to $\Omega^-_\varepsilon).$ We assume that $f^\pm \in C^2_0(\Omega^\pm).$

It is obvious that for each fixed value of the parameter $\varepsilon$ there is  a weak unique solution
$$
u_\varepsilon(x) = \left\{
                                       \begin{array}{ll}
                                         u^-_{\varepsilon}(x), & x \in \Omega^-_\varepsilon,
 \\[2pt]
                                         u^+_{\varepsilon}(x), & x \in \breve{\Omega}^+_{\varepsilon},
                                       \end{array}
                                     \right.
$$
to problem \eqref{new-problem} and $\|u_\varepsilon\|_{H^1(\Omega_\varepsilon)} \le C_1.$ Moreover, by virtue of the  assumptions for $f^\pm$ and $\ell$ and based on the results obtained in \cite{Scheftel-1965,Shish-1961}, the solution is classical (see these papers for the definition). Furthermore,  $u^-_\varepsilon$ and $u^+_\varepsilon$ belong  to the H\"older spaces $C^{3,\mu}$ within the closures of the domains $\Omega^-_\varepsilon$ and  $\breve{\Omega}^+_{\varepsilon},$ respectively. Note that these results for corner points should be applied in the same manner as in  \S~\ref{par-3-3}.

As in these articles, one can establish Schauder estimates for the solution, as well as the corresponding estimates in H\"older norms.
Additionally, by repeating the relevant proofs in our case, it can be seen that  constants in these estimates estimates remain independent of
 $\varepsilon.$

 For example, in the proof of Theorem 1 \cite{Scheftel-1965}, to  locally straighten the surface $\Lambda_\varepsilon,$  one can use the mapping $y_1 = x_1 - \varepsilon \ell(\frac{x_2}{\varepsilon}),$ $y_2 = x_2$ in an $\varepsilon$-vicinity, since  the function $\ell(\frac{x_2}{\varepsilon})$ is $\varepsilon$-periodic. Clearly, its  Jacobian determinant equals 1, and  the line differential element
$\sqrt{1+ |\ell^\prime(\xi_2)|^2}|_{\xi_2 = \frac{x_2}{\varepsilon}}$ is uniformly bounded with respect to $\varepsilon.$ Thus,
\begin{equation}\label{est-solution}
  \|u^+_\varepsilon\|_{ C^1\big(\overline{\breve{\Omega}^+_{\varepsilon}}\big)} \ + \
\|u^-_\varepsilon\|_{ C^1\big(\overline{\Omega^-_\varepsilon}\big)} \le C_1.
\end{equation}

Now the  study focuses on constructing an asymptotic approximation ( as $\varepsilon \to 0)$  for the solution  while simultaneously establishing asymptotic estimates for the solution and its gradient,  with a particular focus on analyzing the impact of the oscillating interface on these results.

\begin{remark}
More general differential operators can be considered in problem  \eqref{new-problem} (see Remark~\ref{Rem-2-2}).
\end{remark}

\subsection{Construction of approximation}
In the perforated square $\Omega^+_\varepsilon$ the asymptotics is defined in the same way as in \S~\ref{subsect-3-1}.
The inner-layer asymptotics is  given in the domain $\Upsilon,$ which is now the union of the sets
$$
\Upsilon^- := \{\xi\colon \ \xi_1 < \ell(\xi_2), \ \ 0<\xi_2<1\}, \quad \Upsilon^{+} := \{\xi\colon \ \ell(\xi_2) < \xi_1, \ 0<\xi_2<1\} \bigcap
\Big(\bigcup_{k \in \Bbb N_0} \big( Y + (k, 0)\big)\Big),
$$
and $\lambda := \{\xi\colon \ \xi_1 = \ell(\xi_2), \ \ 0<\xi_2<1\}$  (see Fig.~\ref{f5}).

\begin{figure}[htbp]\label{F}
  \centering
  \includegraphics[width=10cm]{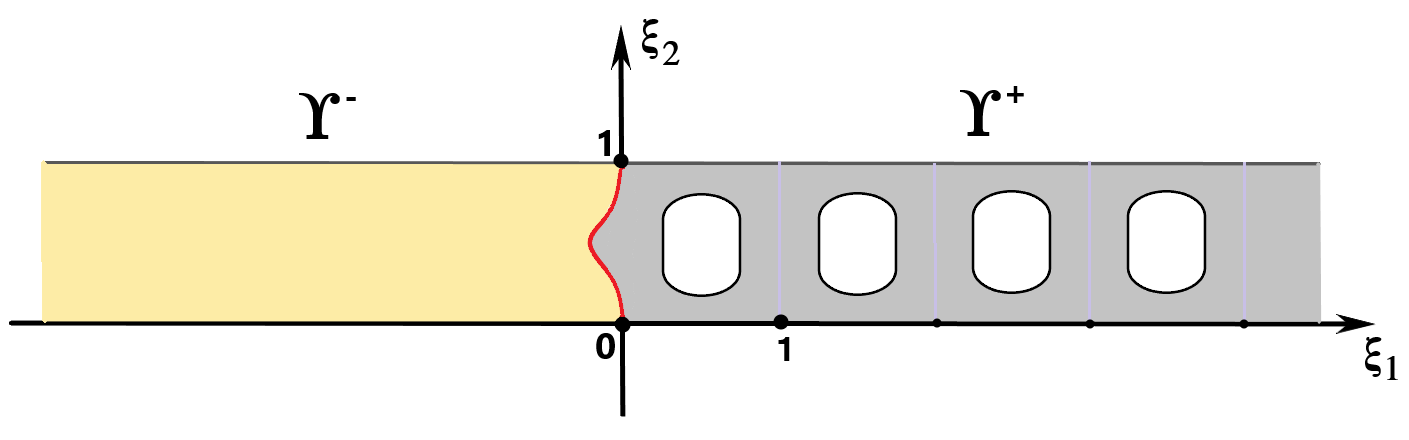}
  \caption{Partial perforated band-cell}\label{f5}
   \end{figure}

The following model problem is now considered:
\ find \ $ B(\xi) = \left\{
                     \begin{array}{l}
                       B^+(\xi), \  \xi \in \Upsilon^+, \\
                       B^-(\xi), \ \xi \in \Upsilon^-,
                     \end{array}
                   \right.
$
 \  that solves the problem
\begin{equation}\label{Innner cell-model+1}
\left\{
\begin{array}{l}
D^- \Delta_{\xi}B^-(\xi)  = F^-_0(\xi) +  \partial_{\xi_2}F^-_1(\xi) \ \  \text{in} \ \ \Upsilon^-,
\qquad
\Delta_{\xi}B^+(\xi)  = F^+_0(\xi) +  \partial_{\xi_2}F^+_1(\xi)
\ \  \text{in} \ \ \Upsilon^+,
\\[4pt]
\partial_{\vec{\nu}(\xi_2)}B^+(\xi) =  \nu_{2}(\xi_2)\,
F^+_1(\xi)  \ \  \text{on} \ \ \mathfrak{S}^+,
\\[4pt]
\big[B \big]_{\xi \in \lambda} = \Phi(\xi_2) + {q}  \quad  \text{and} \quad
\big[\partial_{\vec{\nu}_\xi}B \big]_{\xi \in \lambda}
 = \Psi(\xi_2)  + {J}\, \tau(\xi_2)  \quad \xi_2 \in (0,1),
\\[4pt]
B^\pm(\xi) \to 0 \ \  \text{as} \ \ \xi_1\to \pm \infty, \qquad  B^\pm(\xi)\ \  \text{are} \ 1\text{-periodic in} \ \xi_2,
\end{array}
\right.
\end{equation}
where $\big[B \big]_{\xi \in \lambda} := B^+(\xi) - B^-(\xi),$ \ $\big[\partial_{\vec{\nu}_\xi}B \big]_{\xi \in \lambda} := \big(\nabla_\xi B^+(\xi) - D^- \nabla_\xi B^-(\xi)\big) \cdot  \vec{\nu}_\xi,$ and
$$
\vec{\nu}_\xi  = \big(\nu_1(\xi_2), \, \nu_2(\xi_2)\big) = \left(\frac{1}{\sqrt{1+ |\ell^\prime(\xi_2)|^2}}, \ - \frac{\ell^\prime(\xi_2)}{\sqrt{1+ |\ell^\prime(\xi_2)|^2}}  \right), \quad \xi_2 \in [0, 1].
$$

Similarly to the proof of Theorem~\ref{Th-3-1}, using results from  \cite{Ole-Ios} and \cite[Chapt. I, \S 8]{Ole-Ios-Sha-1991}, we derive the theorem.
\begin{theorem}\label{Th-5-1}
   Let the right-hand sides $F^\pm_0, \ F^\pm_1, \ \Phi, \ \Psi, \  \tau$ in problem \eqref{Innner cell-model} be smooth functions in their domains of definition and  1-periodic in $\xi_2.$
Let $e^{\delta_0\, |\xi_1|} \, F^\pm_0 \in L^2(\Upsilon^\pm), \ \ e^{\delta_0\, |\xi_1|} \, F^\pm_1 \in L^2(\Upsilon^\pm)$ for some $\delta_0 > 0$
 and
\begin{equation}\label{cons+J}
 { J} = - \frac{1}{\int_{\lambda}\tau(\xi_2) \, dl_{\xi}} \left(\int_{\lambda}\Psi(\xi_2) \, dl_{\xi} + \int_{\Upsilon^\pm} F^\pm_0(\xi)\, d\xi \right).
\end{equation}

Then there exists a unique number ${q} \in \Bbb R$ and a unique  solution to problem \eqref{Innner cell-model+1}  with the following differential asymptotics
\begin{equation*}
  B(\xi) = \mathcal{O}\big(e^{-\delta\, |\xi_1|}\big) \ \ \text{as} \ \ |\xi_1| \to \infty \quad (\delta >0).
\end{equation*}

In addition,
\begin{itemize}
  \item if $F^\pm_0, \ \Phi, \ \Psi, \ \tau$ are odd in $\xi_2$ and $F^\pm_1$ are even in $\xi_2,$ then the solution $B$ is odd in $\xi_2$
and
$$
{q} =0 \quad \text{and} \quad {J}=0;
$$
\item if $F^\pm_0, \ \Phi, \ \Psi, \ \tau$ are even in $\xi_2$ and $F^\pm_1$ are odd in $\xi_2,$ then the solution $B$ is even  in $\xi_2.$
\end{itemize}
\end{theorem}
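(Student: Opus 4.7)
The plan is to mirror the proof of Theorem~\ref{Th-3-1}, adapting the Hilbert-space variational approach to the oscillating interface $\lambda$ and the piecewise coefficient $D^-$. First I would introduce the Hilbert space $\mathsf{H}$ of $\xi_2$-periodic functions on $\Upsilon = \Upsilon^- \cup \lambda \cup \Upsilon^+$ equipped with the inner product
\[
(\varphi,\psi)_{\mathsf{H}} = \int_{\Upsilon^+}\nabla\varphi\cdot\nabla\psi\,d\xi + D^-\int_{\Upsilon^-}\nabla\varphi\cdot\nabla\psi\,d\xi + \int_{\Upsilon}\rho^2(\xi_1)\,\varphi\,\psi\,d\xi,
\]
where $\rho$ is the weight used in Section~\ref{Sect_3}. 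After multiplying the equations in~(\ref{Innner cell-model+1}) by a test function $\varphi\in\mathsf{H}$ and integrating by parts, exploiting $\xi_2$-periodicity, the Neumann condition on $\mathfrak{S}^+$, the flux jump across $\lambda$, and exponential decay as $|\xi_1|\to\infty$, one obtains a weak identity. Testing this identity with $\varphi\equiv 1\in\mathsf{H}$ produces exactly the compatibility condition~(\ref{cons+J}), thereby establishing its necessity.

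For sufficiency, I would decompose $B = P_1 + P_2$ as in the proof of Theorem~\ref{Th-3-1}: $P_1$ has $[P_1]_\lambda = 0$ and carries the flux jump $\Psi + J\tau$, while $P_2$ carries only the trace jump $\Phi$ with zero flux jump. For $P_1$, the central step is an equivalent-norm estimate on $\mathsf{H}$ in which the $\rho$-weighted bulk term is replaced by $\int_{\Upsilon\cap\{|\xi_1|<R\}}\varphi^2\,d\xi$ with $R > \|\ell\|_\infty + 1$; this follows from Hardy's inequality along horizontal fibres together with the $\varepsilon$-independent extension operator of~\cite{Cio-Paulin1079}. The Riesz representation theorem then casts the weak formulation as an operator equation $P_1 - \mathbf{A}P_1 = \mathcal{F}$ with $\mathbf{A}$ self-adjoint and compact (by compact embedding into $L^2$ of the reference region), and since the kernel of the homogeneous problem on $\mathsf{H}$ consists only of constants, the Fredholm alternative delivers solvability precisely under~(\ref{cons+J}). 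For $P_2$, I would perform the substitution $T = P_2^+ - \Phi(\xi_2)\,\eta(\xi_1 - \ell(\xi_2))$ on $\Upsilon^+$, with $\eta$ a cut-off supported in a strip disjoint from $\partial G_0$, reducing $P_2$ to a problem of $P_1$-type for which the compatibility condition is automatic.

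The exponential decay of $\hat{B} = P_1 + P_2$ at $\pm\infty$ follows from the results of~\cite{Ole-Ios} (see also \cite[Chapt.~I, \S 8]{Ole-Ios-Sha-1991}) applied separately to the periodic perforated half-strip $\Upsilon^+\cap\{\xi_1>R\}$ and to the flat half-strip $\Upsilon^-\cap\{\xi_1<-R\}$: $\hat{B}$ tends to constants $C_1$ and $0$, respectively. Subtracting $C_1$ on $\Upsilon^-$ yields the decaying solution $B$ and pins down $q = C_1$ uniquely. For the symmetry statements I would set $\widetilde{B}(\xi_1,\xi_2) := B(\xi_1,1-\xi_2)$ and use the evenness of $\ell$ (which gives $\ell(1-\xi_2) = \ell(\xi_2)$, $\nu_1(1-\xi_2) = \nu_1(\xi_2)$, $\nu_2(1-\xi_2) = -\nu_2(\xi_2)$, and invariance of $\Upsilon^\pm$ and $\lambda$) together with the assumed parities of the data to verify that $\widetilde{B}$ solves an analogous problem with sign-reversed sources and jumps. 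Combining $B$ and $\widetilde{B}$ appropriately and applying the uniqueness already established to the resulting zero-data problem forces $q = J = 0$ in the odd case and the stated parity of $B$ in both cases.

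The main obstacle is the equivalence-of-norms estimate analogous to~(\ref{add-ineq}). On $\Upsilon^-$ the Hardy inequality must be integrated along $\xi_1$-half-lines whose starting abscissa $\xi_1 = \ell(\xi_2)$ depends on $\xi_2$, and on $\Upsilon^+$ the first $\xi_2$-fibre of the reference region is not a full square. Enlarging the reference region to $\Upsilon\cap\{|\xi_1|<R\}$ with $R > \|\ell\|_\infty + 1$ absorbs the entire interface zone into the bounded part, so that the unbounded tails on either side are again flat translation-periodic cylinders; the $C^4$-regularity and boundedness of $\ell$ then ensure that the constants in the extension operator and in Hardy's inequality can be taken uniform in $\xi_2\in[0,1]$. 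Beyond this technical adaptation, the argument proceeds essentially verbatim as for Theorem~\ref{Th-3-1}, with $D^-$ entering only as a uniformly positive coefficient in the quadratic form.
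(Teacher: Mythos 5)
Your proposal is correct and takes essentially the same route as the paper: the paper proves Theorem~\ref{Th-5-1} by the single remark that it is derived ``similarly to the proof of Theorem~\ref{Th-3-1}, using results from \cite{Ole-Ios} and \cite[Chapt.~I, \S 8]{Ole-Ios-Sha-1991}.'' Your fleshed-out adaptation --- the $D^-$-weighted bilinear form, the decomposition $B = P_1 + P_2$, absorbing the oscillation zone $\{|\xi_1| < \|\ell\|_\infty + 1\}$ into the compact reference region so that Hardy's inequality and the extension operator are invoked only on the flat tails, the necessity of \eqref{cons+J} by testing with $\varphi \equiv 1$, and the reflection argument $\widetilde B(\xi) = B(\xi_1,1-\xi_2)$ using the evenness of $\ell$ --- is exactly the intended argument and contains no gaps.
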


The first terms of the  inner-layer asymptotics are solutions to the following problems:
\begin{equation}\label{Innner cell1+}
\left\{
\begin{array}{l}
\Delta_{\xi}B^\pm_1(\xi) = 0 \ \  \text{in} \ \ \Upsilon^\pm, \qquad \partial_{\vec{\nu}_\xi} B^+_1(\xi) = 0\ \  \text{on} \ \ \mathfrak{S}^+,
\\[4pt]
\big[ B_1 \big]_{\xi\in \lambda}  =  - N_1(\ell(\xi_2), \xi_2) + q_1, \quad \xi_2 \in (0,1),
\\[4pt]
\big[\partial_{\vec{\nu}_\xi}B_1 \big]_{\xi \in \lambda}
  =  - \big(\nabla_{\xi}N_1 \cdot \vec{\nu}_\xi\big)\big|_{\xi \in \lambda} + \tilde{J}_1 \, \nu_1(\xi_2), \quad \xi_2 \in (0,1),
\\[4pt]
B^\pm_1(\xi) \to 0 \ \  \text{as} \ \ \xi_1\to \pm \infty, \qquad  B^\pm_1(\xi)\ \  \text{are} \ 1\text{-periodic in} \ \xi_2,
\end{array}
\right.
\end{equation}
and
\begin{equation}\label{Innner cell2+}
\left\{
\begin{array}{l}
\Delta_{\xi}B^\pm_2(\xi) = 0 \ \  \text{in} \ \ \Upsilon^\pm, \qquad \partial_{\vec{\nu}_\xi} B^+_2(\xi) = 0\ \  \text{on} \ \ \mathfrak{S}^+,
\\[4pt]
\big[ B_1 \big]_{\xi\in \lambda}  =  - N_2(\ell(\xi_2), \xi_2), \quad \xi_2 \in (0,1),
\\[4pt]
\big[\partial_{\vec{\nu}_\xi}B_2 \big]_{\xi \in \lambda}
  =  - \big(\nabla_{\xi}N_2 \cdot \vec{\nu}_\xi\big)\big|_{\xi \in \lambda} + (D^- -1) \, \nu_2(\xi_2), \quad \xi_2 \in (0,1),
\\[4pt]
B^\pm_1(\xi) \to 0 \ \  \text{as} \ \ \xi_1\to \pm \infty, \qquad  B^\pm_1(\xi)\ \  \text{are} \ 1\text{-periodic in} \ \xi_2,
\end{array}
\right.
\end{equation}
where $N_1$ and $N_2$ are smooth $1$-periodic solutions to problems  \eqref{cell1} and \eqref{cell2}, respectively.

\begin{remark}
  It should be noted that if $\ell \equiv 0,$ then for  $D^- =1$  problem \eqref{Innner cell2+} coincides with problem~\eqref{Innner cell2},  and   problem \eqref{Innner cell1+}  with  problem \eqref{Innner cell1}, where $J_1 = \tilde{J}_1 - 1.$
\end{remark}

Taking into account the symmetry properties of  $N_1$ and $N_2$ (see \eqref{sym1}) and the evenness  of the function~$\ell,$ we conclude that
$\big(\nabla_{\xi}N_1 \cdot \vec{\nu}_\xi\big)\big|_{\xi \in \lambda}$ and $\nu_1(\xi_2)$ are even in $\xi_2,$ and
$\big(\nabla_{\xi}N_2 \cdot \vec{\nu}_\xi\big)\big|_{\xi \in \lambda}$ and $\nu_2(\xi_2)$ are odd in $\xi_2.$

Applying Theorem \ref{Th-5-1} to problem  \eqref{Innner cell2+}, we see that the solvability condition is automatically fulfilled, which  means that there exists a unique solution that is odd in $\xi_2$ and decreases exponentially  at infinity. The solvability condition for problem \eqref{Innner cell1+} reads as follows
\begin{equation}\label{const-J}
  \tilde{J}_1 = \int_{\lambda}\nabla_{\xi}N_1 \cdot \vec{\nu}_\xi  \, dl_{\xi} .
\end{equation}
Thus,  there exists a unique number $q_1 \in \Bbb R$ and a unique  solution to problem \eqref{Innner cell1+}, which is even  in $\xi_2$ and decreases exponentially  at infinity.

\begin{proposition}\label{Prop-5-1}
  The number
\begin{equation}\label{J-2}
\tilde{J}_1 = - 1\,  +  \upharpoonleft\!\! Y\!\!\upharpoonright  {h_{11}},
\end{equation}
where $h_{11}$ is determined by formula \eqref{ell1}.
\end{proposition}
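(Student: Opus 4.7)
The plan is to mirror the argument used in the proof of Proposition~\ref{Prop-3-5}, exploiting the fact that the vector field
$$\vec{F}(\xi) := \bigl(1 + \partial_{\xi_1} N_1(\xi),\ \partial_{\xi_2} N_1(\xi)\bigr)$$
is divergence-free wherever $N_1$ is harmonic, since $\nabla_\xi \cdot \vec{F} = \Delta_\xi N_1 = 0$ in $Y$ (and hence in every periodic translate of $Y$). My first step would be to rewrite the integrand of $\tilde{J}_1$ on $\lambda$ as
$$\nabla_\xi N_1 \cdot \vec{\nu}_\xi = \vec{F} \cdot \vec{\nu}_\xi - \nu_1(\xi_2),$$
and observe that, by the parametrization $\xi_2\mapsto(\ell(\xi_2),\xi_2)$ of $\lambda$ together with \eqref{normal},
$$\int_\lambda \nu_1(\xi_2)\,dl_\xi = \int_0^1 \frac{1}{\sqrt{1+|\ell'(\xi_2)|^2}}\,\sqrt{1+|\ell'(\xi_2)|^2}\,d\xi_2 = 1.$$
Consequently $\tilde{J}_1 = \int_\lambda \vec{F}\cdot\vec{\nu}_\xi\,dl_\xi - 1$, and the task reduces to computing the flux of $\vec{F}$ through $\lambda$.

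Next I would apply the divergence theorem to $\vec{F}$ on the region $R$ bounded on the left by $\lambda$, on the right by the segment $\{\xi_1=1,\ 0<\xi_2<1\}$, and on top and bottom by $\{\xi_2=0\}$ and $\{\xi_2=1\}$, with the interior hole $G_0$ (and any periodic translate of $G_0$ that happens to lie in $R$) cut out. Here $N_1$ is understood as its $1$-periodic extension, so $\vec{F}$ is divergence-free throughout $R$; the standing assumption that $\Lambda_\varepsilon \cap \partial G_\varepsilon = \emptyset$ ensures $\lambda$ does not clip any hole, so the geometry is well-defined. On $\partial R$ the contributions from the top and bottom segments cancel by the $1$-periodicity of $N_1$ in $\xi_2$, and on each hole boundary the Neumann condition $\partial_{\vec{\nu}_\xi}N_1 = -\nu_1$ from \eqref{cell1} yields $\vec{F}\cdot\vec{\nu}_\xi = \nu_1 + \partial_{\vec{\nu}_\xi}N_1 = 0$, so those pieces vanish. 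Taking the outward normal on $\lambda$ to be $-\vec{\nu}_\xi$, what remains is
$$\int_\lambda \vec{F}\cdot\vec{\nu}_\xi\,dl_\xi = \int_0^1 \bigl(1 + \partial_{\xi_1}N_1\bigr)(1,\xi_2)\,d\xi_2.$$

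To conclude, I would invoke the $1$-periodicity of $N_1$ in $\xi_1$ to rewrite the right-hand side as $\int_\gamma (1+\partial_{\xi_1}N_1)|_{\xi_1=0}\,d\xi_2$, which by the calculation performed in the proof of Proposition~\ref{Prop-3-5} equals $\int_Y(1+\partial_{\xi_1}N_1)\,d\xi = \upharpoonleft\!\!Y\!\!\upharpoonright h_{11}$. Combining gives $\tilde{J}_1 = \upharpoonleft\!\!Y\!\!\upharpoonright h_{11} - 1$, which is precisely \eqref{J-2}. The main obstacle is the geometric book-keeping: since $\ell\le 0$ the curve $\lambda$ lies (partially) to the left of the fundamental cell $Y$, so one must work with the periodic extension of $N_1$ and verify that every periodic image of $G_0$ intersecting $R$ can be excised without touching $\lambda$. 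Once this is arranged (equivalently, one could translate $\lambda$ by $(1,0)$ to bring it inside $Y$ and apply the divergence theorem on the subregion of $Y$ to the right of the translated curve), the argument reduces, as above, to the three ingredients \eqref{normal}, the Neumann condition in \eqref{cell1}, and the trace identity from Proposition~\ref{Prop-3-5}.
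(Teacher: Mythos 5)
Your proof is correct. The ingredients are the same as the paper's---the divergence-free field $\vec{F}=(1+\partial_{\xi_1}N_1,\partial_{\xi_2}N_1)$, the Neumann condition on $\partial G_0$, periodicity, $\int_\lambda\nu_1\,dl_\xi=1$, and the trace identity from Proposition~\ref{Prop-3-5}---but the organization differs. The paper performs a \emph{single} integration by parts over $\tilde Y := Y\cup\{\ell(\xi_2)<\xi_1\le 0,\;0<\xi_2<1\}$ against the piecewise test function $\phi(\xi)$ that equals $-\xi_1+1$ on $Y$ and $1$ on the sliver; because $\phi$ vanishes on $\{\xi_1=1\}$, the right boundary contributes nothing and the three required quantities ($-\int_\lambda\nu_1$, $-\tilde J_1$, and $\upharpoonleft\!\! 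Y\!\!\upharpoonright h_{11}$) fall out in one shot. You instead run the unweighted divergence theorem on the same region to slide the flux from $\lambda$ to $\{\xi_1=1\}$, use $\xi_1$-periodicity to move it to $\gamma$, and then invoke the intermediate identity $\int_\gamma(1+\partial_{\xi_1}N_1)|_{\xi_1=0}\,d\xi_2=\int_Y(1+\partial_{\xi_1}N_1)\,d\xi$ from Proposition~\ref{Prop-3-5} as a black box. Your decomposition is more modular---it visibly reduces the oscillating-interface constant to the flat-interface one already computed---at the mild cost of one extra step (the periodicity translation from $\xi_1=1$ to $\xi_1=0$) that the paper's test-function trick bypasses; you could avoid that step by applying the divergence theorem only on the unperforated sliver $\{\ell(\xi_2)<\xi_1<0\}$, which carries the $\lambda$-flux directly to the $\gamma$-flux. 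Your remarks on the geometric book-keeping are apt and match the paper's own choice of $\tilde Y$; since $\Upsilon^+$ is built from translates $Y+(k,0)$ with $k\ge 0$ only, the sliver contains no hole, so the excision issue you flag does not arise.
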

\begin{proof} Due to the 1-periodicity of the solution $N_1$ to problem \eqref{cell1} we  have that
$$
 \partial_{\xi_1}\big(1 + \partial_{\xi_1}N_1\big) + \partial^2_{\xi_2}N_1 =0 \quad \text{in} \ \ \tilde{Y} : = Y \cup \{\xi\colon \  \ell(\xi_2) < \xi_1 \le 0,  \ \ \xi_2 \in (0, 1)\}.
$$
Then we multiply this equation by the test function
$$
\phi(\xi) = \left\{
              \begin{array}{ll}
                -\xi_1 +1, & \xi \in Y, \\
                1, & \xi \in \{\xi\colon \  \ell(\xi_2) < \xi_1 \le  0,  \ \ \xi_2 \in (0, 1)\},
              \end{array}
            \right.
$$
and integrate by parts in $\tilde{Y}.$ As a result, we get
\begin{equation}\label{J-1}
  0  =   - \int_{\lambda}\nu_1(\xi) \, dl_{\xi} - \int_{\lambda}\nabla_{\xi}N_1 \cdot \vec{\nu}_\xi  \, dl_{\xi}
+ \int_{Y} \big(1 + \partial_{\xi_1}N_1\big) \, d\xi .
\end{equation}
Since
$$
\int_{\lambda}\nu_1(\xi) \, dl_{\xi} = \int_{0}^{1} \frac{1}{\sqrt{1+ |\ell^\prime(\xi_2)|^2}} \, \sqrt{1+ |\ell^\prime(\xi_2)|^2} \, d\xi_2 = 1,
$$
relation \eqref{J-2} follows from  \eqref{J-1}.
\end{proof}

Let  $ v_0(x) = \left\{
                     \begin{array}{ll}
                        v^+_0(x), & x \in \Omega^+, \\
                        v^-_0(x), & x  \in \Omega^-,
                     \end{array}
                   \right.
$
 be a solution to the problem
\begin{equation}\label{Homo-problem+1}
\left\{
\begin{array}{lcl}
D^- \, \Delta_x v^-_0(x) = f^-(x) \ \ \text{in} \ \ \Omega^- , & &  \upharpoonleft\!\! Y\!\!\upharpoonright {h_{11}} \, \partial^2_{x_1}v^+_0(x) \  + \  \upharpoonleft\!\! Y\!\!\upharpoonright {h_{22}}\,  \partial^2_{x_2}v^+_0(x) = \ \upharpoonleft\!\! Y\!\!\upharpoonright f^+(x) \ \ \text{in} \ \ \Omega^+ ,
\\[2pt]
v^-_0(x)=0 \ \ \text{on} \ \ \Gamma^-,& &
v^+_0(x) = 0  \ \ \text{on} \ \ \Gamma^+,
\\[2pt]
v^-_0(0,x_2)   = v^+_0(0,x_2) \ \ \text{on} \ \ {\mathcal{Z}}, & &
D^-\, \partial_{x_1}v^-_0(x) =\  \upharpoonleft\!\! Y\!\!\upharpoonright  {h_{11}} \, \partial_{x_1}v^+_0(x) \ \ \text{on} \ \ {\mathcal{Z}},
\end{array}
\right.
\end{equation}
which is  a \emph{homogenized problem} for problem \eqref{new-problem}.
In the same way as in \S~\ref{par-3-3}, based on the results of
\cite{Scheftel-1965,Shish-1961}) and the assumptions about the smoothness of $f^\pm$ and $\ell,$ we conclude that
$
v_0^\pm \in C^{3, \mu}\big(\overline{\Omega^\pm} \big).
$
Considering the smoothness of $N_1, N_2$ and $\ell$ with the same arguments, one finds that the inclusions
$
B_i^\pm \in C^{2, \mu}\big(\overline{\Upsilon^\pm} \big),
$
$i\in \{1, 2\},$ hold for the solutions to problems \eqref{Innner cell1+} and
\eqref{Innner cell2+}.

Using the solutions to problems \eqref{Homo-problem+1}, \eqref{Innner cell1+}, \eqref{Innner cell2+}, \eqref{cell1} and \eqref{cell2}, we construct the approximation
\begin{equation}\label{app1}
  U_\varepsilon(x) :=
\left\{
\begin{array}{ll} \displaystyle
v^+_0(x) + \varepsilon \sum_{i=1}^2 \Big(N_i(\tfrac{x}{\varepsilon}) \,  \partial_{x_i}v^+_0(x)
 + \chi_0(x_1)  B^+_i(\tfrac{x}{\varepsilon}) \, \partial_{x_i}v^+_0(x)\big|_{x_1=0}\Big), &  x \in \Omega^+_\varepsilon,
\\ \displaystyle
\tilde{v}^+_0(x) + \varepsilon \sum_{i=1}^2 \Big(N_i(\tfrac{x}{\varepsilon}) \,  \partial_{x_i}\tilde{v}^+_0(x)
 + \chi_0(x_1)  B^+_i(\tfrac{x}{\varepsilon}) \, \partial_{x_i}v^+_0(x)\big|_{x_1=0}\Big), &  x \in  \breve{\Omega}^+_{{\varepsilon}} \setminus \Omega^+_\varepsilon,
\\ \displaystyle
v^-_0(x) + \varepsilon\,  \chi_0(x_1) \, \sum_{i=1}^2   B^-_i(\tfrac{x}{\varepsilon}) \, \partial_{x_i}v^+_0(x)\big|_{x_1=0}, &  x \in \Omega^-_\varepsilon,
\end{array}
\right.
\end{equation}
where the cut-off function $\chi_0$ is defined in \eqref{cut-off}, $\tilde{v}^+_0$ is the $C^3$-extension of ${v}^+_0$ from $\Omega^+_\varepsilon$ onto
$\breve{\Omega}^+_{{\varepsilon}} \setminus \Omega^+_\varepsilon.$
Obviously, $U^+_\varepsilon \in C^{2, \mu}\big(\overline{\breve{\Omega}^+_{\varepsilon}} \big),$ $ U^-_\varepsilon \in C^{2, \mu}\big(\overline{\Omega^-_{\varepsilon}} \big)$ and
\begin{equation}\label{est-appr}
  \|U^+_\varepsilon\|_{ C^1\big(\overline{\breve{\Omega}^+_{\varepsilon}}\big)} \ + \
\|U^-_\varepsilon\|_{ C^1\big(\overline{\Omega^-_\varepsilon}\big)} \le C_1.
\end{equation}

\subsection{Justification} In virtute of the symmetry properties of $N_1, N_2, B_1, B_2$, similarly as in Sect.~\ref{Sect_3}, we check  that $U_\varepsilon\big|_{\partial\Omega} =0.$ Now we calculate
\begin{align}\label{cal-1}
 \Delta_x \Big(v^+_0 + \varepsilon \sum_{i=1}^2 N_i \,  \partial_{x_i}v^+_0\Big) & = \sum_{i, j =1}^{2}\big(\delta_{i, j} + 2 \partial_{\xi_i}N_j\big) \, \partial^2_{x_i x_j}v^+_0 +  \varepsilon \sum_{i=1}^2 N_i \,  \partial_{x_i}\Delta_x v^+_0 \notag
\\
 &  \stackrel{\eqref{cell5}}{=}   f^+  - \sum_{i, j =1}^{2} \Delta_\xi N_{i j}(\xi)\big|_{\xi= \frac{x}{\varepsilon}} \partial^2_{x_i x_j}v_0^+  +   \varepsilon \sum_{i=1}^2 N_i \,  \partial_{x_i}\Delta_x v^+_0 \notag
\\
& =  f^+ + \varepsilon\, F^+_0 + \varepsilon \sum_{k=1}^{2} \partial_{x_k}F^+_k \quad \text{in} \ \  \Omega^+_\varepsilon,
\end{align}
where $\{N_{i j}\}$ are solutions to problems \eqref{cell5},
\begin{gather}\label{F-0}
  F^+_0(x; \varepsilon) = \sum_{i=1}^2 N_i(\tfrac{x}{\varepsilon}) \,  \partial_{x_i}\Delta_x v^+_0(x) + \sum_{i, j, k=1}^2 \partial_{\xi_k} N_{i j}(\xi)\big|_{\xi= \frac{x}{\varepsilon}} \,  \partial^3_{x_k x_i x_j}v^+_0(x), \quad x\in \Omega^+_\varepsilon,
\\
\label{F-k}
  F^+_k(x; \varepsilon) =  - \sum_{i, j =1}^2 \partial_{\xi_k} N_{i j}(\xi)\big|_{\xi= \frac{x}{\varepsilon}} \,  \partial^2_{x_i x_j}v^+_0(x), \quad x\in \Omega^+_\varepsilon, \quad k \in \{1, 2\}.
\end{gather}
It follows from \eqref{F-0} and \eqref{F-k} that
\begin{equation}\label{e-1}
  \sum_{i=0}^{2} \max_{x\in \overline{\Omega^+_\varepsilon}} \big| F^+_i(x; \varepsilon) \big| \le C_1.
\end{equation}
The normal derivative
$$
\nabla_x\Big(v^+_0 + \varepsilon \sum_{i=1}^2 N_i \,  \partial_{x_i}v^+_0\Big) \cdot \vec{\nu}_\varepsilon = \varepsilon
\sum_{k=1}^{2} F^+_k \, \nu_k(\tfrac{x}{\varepsilon}) \quad \text{on} \ \ \partial G_{\varepsilon}.
$$

In the same way we verify that
\begin{equation}\label{cal}
  \Delta_x \Big(\tilde{v}^+_0 + \varepsilon \sum_{i=1}^2 N_i \,  \partial_{x_i}\tilde{v}^+_0\Big) =
{\widehat{\mathcal{H}}}\,\tilde{v}^+_0 + \varepsilon\, \tilde{F}^+_0 + \varepsilon \sum_{k=1}^{2} \partial_{x_k}\tilde{F}^+_k \quad \text{in} \ \ \breve{\Omega}^+_{{\varepsilon}} \setminus  \Omega^+_\varepsilon,
\end{equation}
where  ${\widehat{\mathcal{H}}} = {h_{11}} \, \partial^2_{x_1}  +  {h_{22}}\,  \partial^2_{x_2},$ \,  the functions
$\{\tilde{F}^+_k\}_{k=0}^2$ are defined by formulas \eqref{F-0} and \eqref{F-k}, in which the extension $\tilde{v}^+_0$ replaces $v_0^+,$ and an estimate similar to \eqref{e-1} is valid.

Since ${\widehat{\mathcal{H}}}\,\tilde{v}^+_0\big|_{x_1 =0} = {\widehat{\mathcal{H}}}\,{v}^+_0\big|_{x_1 =0} =0,$ by Taylor's formula we get that ${\widehat{\mathcal{H}}}\,\tilde{v}^+_0 = \mathcal{O}(\varepsilon)$ as $\varepsilon \to 0$ in $\breve{\Omega}^+_{{\varepsilon}} \setminus  \Omega^+_\varepsilon.$ Further we will assume that ${\widehat{\mathcal{H}}}\,\tilde{v}^+_0$ is included into  $\tilde{F}^+_0.$

Similarly, we also calculate
\begin{gather*}
 \varepsilon \, \Delta_x \Big(\sum_{i=1}^2 B^\pm_i(\tfrac{x}{\varepsilon}) \, \partial_{x_i}v^+_0(x)\big|_{x_1=0} \Big) =
\varepsilon\, \mathcal{F}^\pm_0 + \varepsilon \sum_{k=1}^{2} \partial_{x_k}\mathcal{F}^\pm_k \quad \text{in} \ \ \breve{\Omega}^+_{{\varepsilon}} \cup \Omega^-_\varepsilon,
\\
\varepsilon \ \nabla_x\Big(\sum_{i=1}^2 B^+_i(\tfrac{x}{\varepsilon}) \, \partial_{x_i}v^+_0(x)\big|_{x_1=0} \Big) \cdot \vec{\nu}_\varepsilon = \varepsilon
\sum_{k=1}^{2} \mathcal{F}^+_k \, \nu_k(\tfrac{x}{\varepsilon}) \quad \text{on} \ \ \partial G_{\varepsilon}.
\end{gather*}
Note  that  to calculate these we use differential equations for the coefficients  $\{B_{i j}\},$ which are the same
as in \eqref{B-alpha} for $|\alpha| =2.$ The functions $\{\mathcal{F}^\pm_k\}_{k=0}^2$ satisfy  an estimate similar to \eqref{e-1} and  are exponentially small on the support of $\chi^\prime_0(x_1).$

Now let us see what happens at the interface. For the difference $\big[U_\varepsilon\big]_{x \in \Lambda_\varepsilon},$ based on the jump conditions for the solutions of problems \eqref{Innner cell1+}, \eqref{Innner cell2+} and \eqref{Homo-problem+1}, we obtain the following results:
\begin{align*}
  \big[U_\varepsilon\big]_{x \in \Lambda_\varepsilon} = & \ \tilde{v}^+_0 + \varepsilon \sum_{i=1}^2 \Big(N_i(\tfrac{x}{\varepsilon}) \,  \partial_{x_i}\tilde{v}^+_0(x)
 +  B^+_i(\tfrac{x}{\varepsilon}) \, \partial_{x_i}v^+_0(x)\big|_{x_1=0}\Big)
 - \bigg(v^-_0 + \varepsilon\,   \sum_{i=1}^2   B^-_i(\tfrac{x}{\varepsilon}) \, \partial_{x_i}v^+_0(x)\big|_{x_1=0}\bigg)
\\
= & \  \tilde{v}^+_0(\varepsilon \ell(\tfrac{x_2}{\varepsilon}), x_2) - v^-_0(\varepsilon \ell(\tfrac{x_2}{\varepsilon}), x_2)  + \varepsilon  \sum_{i=1}^2 \Big(N_i(\xi) - [B_i(\xi)]_{\xi \in \lambda}\Big)\Big|_{\xi =\frac{x}{\varepsilon}} \, \partial_{x_i}v^+_0(x)\big|_{x_1=0}
\\
& \  -  \varepsilon  \sum_{i=1}^2N_i(\tfrac{x}{\varepsilon}) \Big(\partial_{x_i}\tilde{v}^+_0(x)\big|_{x_1 =\varepsilon \ell(\frac{x_2}{\varepsilon})} - \partial_{x_i}v^+_0(x)\big|_{x_1=0}\Big)
\\
=& \  \big( \tilde{v}^+_0(\varepsilon \ell(\tfrac{x_2}{\varepsilon}), x_2) - v_0^+(0, x_2)\big) -
\big( {v}^-_0(\varepsilon \ell(\tfrac{x_2}{\varepsilon}), x_2) - v_0^-(0, x_2)\big) + \varepsilon  q_1  \, \partial_{x_1}v^+_0(x)\big|_{x_1=0}
\\
& \ - \varepsilon  \sum_{i=1}^2N_i(\tfrac{x}{\varepsilon}) \Big(\partial_{x_i}\tilde{v}^+_0(x)\big|_{x_1 =\varepsilon \ell(\frac{x_2}{\varepsilon})} - \partial_{x_i}v^+_0(x)\big|_{x_1=0}\Big) =: \varepsilon \, \Psi(x; \varepsilon), \quad x \in \Lambda_\varepsilon,
\end{align*}
where
\begin{equation}\label{e-2}
\max_{x_2 \in [0,d]} \left|\Psi\big(\varepsilon \ell(\tfrac{x_2}{\varepsilon}),  x_2; \varepsilon\big)\right| \le C_2.
\end{equation}

Taking into account \eqref{J-2} and second transmission conditions for the solutions of problems \eqref{Innner cell1+}, \eqref{Innner cell2+} and \eqref{Homo-problem+1}, we find  the jump of the normal derivative
\begin{align*}
 [\partial_{\vec{\nu}_\varepsilon} U]_{x\in \Lambda_\varepsilon} :=  & \ \big(\nabla_x U^+_{\varepsilon} -  D^- \nabla_x U^-_{\varepsilon}\big) \cdot \vec{\nu}_\varepsilon
\\
 =  & \ \big(\nabla_x \tilde{v}^+_0 -  D^- \nabla_x v^-_0\big)\big|_{x\in \Lambda_\varepsilon} \cdot \vec{\nu}_\varepsilon
  + \  \sum_{i=1}^2 \Big(\big(\nabla_{\xi} N_i(\xi)\big|_{\xi \in \lambda} + \big[\partial_{\vec{\nu}_\xi}B_i\big]_{\xi \in \lambda}\big) \cdot \vec{\nu}_\xi\Big)\Big|_{\xi =\frac{x}{\varepsilon}} \, \partial_{x_i}v^+_0(x)\big|_{x_1=0}
\\
& + \  \sum_{i=1}^2 \Big(\big(\nabla_{\xi} N_i(\xi)\big|_{\xi \in \lambda} \cdot \vec{\nu}_\xi\Big)\Big|_{\xi =\frac{x}{\varepsilon}} \, \Big(\partial_{x_i}\tilde{v}^+_0(x)\big|_{x_1 =\varepsilon \ell(\frac{x_2}{\varepsilon})} - \partial_{x_i}v^+_0(x)\big|_{x_1=0}  \Big) + \mathcal{O}(\varepsilon)
\\
= & \ \big(\nabla_x {v}^+_0(x)|_{x_1 =0} -  D^- \nabla_x v^-_0(x)|_{x_1 =0}\big) \cdot \vec{\nu}_\varepsilon
\\
& + \ \tilde{J}_1 \, \nu_1 \, \partial_{x_1}v^+_0(x)|_{x_1=0} + (D^- -1) \, \nu_2 \, \partial_{x_2}v^+_0(x)|_{x_1=0} + \mathcal{O}(\varepsilon)
\\
=& \ \Big((1+ \tilde{J}_1) \, \partial_{x_1}v^+_0|_{x_1=0}  - D^- \, \partial_{x_1}v^-_0|_{x_1=0}\Big) \, \nu_1
+
\Big(\partial_{x_2}v^+_0|_{x_1=0}  -  \partial_{x_2}v^-_0|_{x_1=0}\Big) \, \nu_2 + \mathcal{O}(\varepsilon)
\\
  = & \ \mathcal{O}(\varepsilon) =: \varepsilon \, \Phi(x; \varepsilon), \quad x \in \Lambda_\varepsilon,
\end{align*}
where
\begin{equation}\label{e-3}
\max_{x_2 \in [0,d]} \left|\Phi\big(\varepsilon \ell(\tfrac{x_2}{\varepsilon},  x_2; \varepsilon\big)\right| \le C_3.
\end{equation}

Summarising the above calculations, we can state that the difference $W_\varepsilon := U_\varepsilon - u_\varepsilon$ satisfies the following relations:
 \begin{equation}\label{results-problem}
\left\{
\begin{array}{llll}
D^- \Delta_x W^-_\varepsilon =
\varepsilon\, \mathcal{F}^-_0 + \varepsilon \sum_{k=1}^{2} \partial_{x_k}\mathcal{F}^-_k
 & \text{in} \ \ \Omega^-_\varepsilon,
& \quad
  \Delta_{ x} W^+_\varepsilon = \varepsilon\,
\mathfrak{F}^+_0 + \varepsilon \sum_{k=1}^{2} \partial_{x_k}\mathfrak{F}^+_k & \text{in} \ \ \breve{\Omega}^+_{{\varepsilon}},
\\[4pt]
W^+_\varepsilon =0  & \text{on} \ \ \partial\Omega, & \quad
\partial_{\vec{\nu}_\varepsilon} W_\varepsilon  = \varepsilon
\sum_{k=1}^{2} \mathfrak{F}^+_k \, \nu_k(\tfrac{x}{\varepsilon}) & \text{on} \ \ \partial G_{\varepsilon},
\\[4pt]
\big[W_\varepsilon\big]_{\Lambda_\varepsilon} = \varepsilon \, \Psi(x; \varepsilon), & x \in \Lambda_\varepsilon,
& \quad
\left[\partial_{\vec{\nu}_\varepsilon}W_\varepsilon\right]_{\Lambda_\varepsilon}
= \varepsilon \, \Phi(x; \varepsilon), & x \in \Lambda_\varepsilon,
\end{array}
\right.
\end{equation}
where the functions $\mathfrak{F}^+_k = F^+_k + \mathcal{F}_k^+, \ k \in \{0,1, 2\}.$

Multiplying the differential equations by $W_\varepsilon^+ := W_\varepsilon\big|_{\breve{\Omega}^+_\varepsilon}$ and $W_\varepsilon^- := W_\varepsilon\big|_{\Omega^-_\varepsilon},$ respectively, then  integrating by parts  and taking into account the boundary and interface relations, we get
\begin{multline}\label{main-rel}
D^- \int_{\Omega^-_\varepsilon}|\nabla_x W^-_\varepsilon|^2 dx + \int_{\breve{\Omega}^+_\varepsilon}|\nabla_x W^+_\varepsilon|^2 dx = \varepsilon \bigg(-\int_{\Omega^-_\varepsilon} \mathcal{F}^-_0 \,  W^-_\varepsilon\, dx  - \int_{\breve{\Omega}^+_\varepsilon} \mathfrak{F}^+_0 \,  W^+_\varepsilon\, dx
 \\
+ \sum_{k=1}^{2}\Big(\int_{\Omega^-_\varepsilon} \mathcal{F}^-_k \,  \partial_{x_k} W^-_\varepsilon\, dx  + \int_{\breve{\Omega}^+_\varepsilon} \mathfrak{F}^+_k \,  \partial_{x_k}W^+_\varepsilon\, dx
+ \int_{\Lambda_\varepsilon} \big(\mathfrak{F}^+_k \,  \nu_k \, W^+_\varepsilon - \mathcal{F}^-_k \, \nu_k \, W^-_\varepsilon\big) \, dl_x\Big) \bigg)
\\
-  \int_{\Lambda_\varepsilon} \big( \partial_{\vec{\nu}_\varepsilon}W^+_\varepsilon  \, W^+_\varepsilon -   D^- \, \partial_{\vec{\nu}_\varepsilon}W^-_\varepsilon \,  W^-_\varepsilon\big) \, dl_x.
\end{multline}
The integrand in the last integral can be rewritten as follows
\begin{equation}\label{main+1}
  \big(\partial_{\vec{\nu}_\varepsilon}W^+_\varepsilon -    D^- \, \partial_{\vec{\nu}_\varepsilon}W^-_\varepsilon\big) \, W^+_\varepsilon +  D^- \, \partial_{\vec{\nu}_\varepsilon}W^-_\varepsilon \, \big(W^+_\varepsilon -  W^-_\varepsilon\big) = \varepsilon \, \big(\Phi \, W^+_\varepsilon +
D^- \, \partial_{\vec{\nu}_\varepsilon}W^-_\varepsilon \, \Psi \big) \quad \text{on} \ \ \Lambda_\varepsilon.
\end{equation}
Taking into account the estimates for the residuals  $\{\mathcal{F}^-_k, \, \mathfrak{F}^+_k\}_{k=0}^2,$ $\Psi$ and $\Phi$ (see \eqref{e-1}, \eqref{e-2},
\eqref{e-3}) and also estimates \eqref{est-appr} and \eqref{est-solution}, it follows from \eqref{main-rel} and \eqref{main+1}
$$
\int_{\Omega^-_\varepsilon}|\nabla_x W^-_\varepsilon|^2 dx + \int_{\breve{\Omega}^+_\varepsilon}|\nabla_x W^+_\varepsilon|^2 dx \le  C \, \varepsilon.
$$
This proves the statement.
\begin{theorem}\label{Th-5-2}
There exist positive constants $C$ and $\varepsilon$ such that for all values of $\varepsilon \in (0, \varepsilon_0)$ it holds
\begin{equation}\label{main result}
  \left\|u^-_\varepsilon - U^-_\varepsilon \right\|_{H^1(\Omega^-_\varepsilon)} + \left\|u^+_\varepsilon - U^+_\varepsilon \right\|_{H^1(\breve{\Omega}^+_\varepsilon)} \le C \, \varepsilon^{\frac{1}{2}},
\end{equation}
where $u_\varepsilon^\pm$ is the solution to problem \eqref{new-problem}, and $U^\pm_\varepsilon$ is the approximation function defined in \eqref{app1}.
 \end{theorem}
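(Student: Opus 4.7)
The plan is to set $W_\varepsilon := U_\varepsilon - u_\varepsilon$ and run a single energy estimate on $W_\varepsilon$ across $\Omega_\varepsilon$, exploiting the system that $W_\varepsilon$ satisfies by subtracting the approximate equations from \eqref{new-problem}. First, I would compute the residuals that $U_\varepsilon$ leaves in (i) the two Laplace equations, (ii) the Neumann condition on $\partial G_\varepsilon$, (iii) the Dirichlet condition on $\partial\Omega$, and (iv) the two transmission conditions on the oscillating interface $\Lambda_\varepsilon$. The cell corrections $N_i$ were chosen precisely to kill the $\mathcal{O}(\varepsilon^{-1})$ and $\mathcal{O}(1)$ terms interior to $\breve{\Omega}^+_\varepsilon$, and the inner-layer correctors $B_i^\pm$ (with constants $q_1$ and $\tilde J_1$ fixed by Theorem~\ref{Th-5-1} and Proposition~\ref{Prop-5-1}) were designed to kill the leading interface residuals, so that all remaining residuals are either $\mathcal{O}(\varepsilon)$ or divergences of $\mathcal{O}(1)$ vector fields. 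The Dirichlet condition $W_\varepsilon|_{\partial\Omega}=0$ holds identically thanks to the symmetry-based vanishing argument already used in Proposition~\ref{Prop-3-2} and Proposition~\ref{Prop-3-6}.

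Second, I would test the two Poisson equations against $W_\varepsilon^+$ and $W_\varepsilon^-$, integrate by parts in $\breve{\Omega}^+_\varepsilon$ and $\Omega^-_\varepsilon$ respectively, and sum. Boundary integrals on $\partial\Omega$ vanish; on $\partial G_\varepsilon$ they combine with the Neumann residual to give an $\mathcal{O}(\varepsilon)$ boundary term; on $\Lambda_\varepsilon$ they produce the trace integral involving $\partial_{\vec{\nu}_\varepsilon}W_\varepsilon^\pm$. For the latter, the algebraic identity
\begin{equation*}
\partial_{\vec{\nu}_\varepsilon}W_\varepsilon^+\,W_\varepsilon^+ - D^-\,\partial_{\vec{\nu}_\varepsilon}W_\varepsilon^-\,W_\varepsilon^-
=[\partial_{\vec{\nu}_\varepsilon}W_\varepsilon]\,W_\varepsilon^+ + D^-\,\partial_{\vec{\nu}_\varepsilon}W_\varepsilon^-\,[W_\varepsilon]
\end{equation*}
lets me substitute the known jumps $[W_\varepsilon]=\varepsilon\Psi$ and $[\partial_{\vec{\nu}_\varepsilon}W_\varepsilon]=\varepsilon\Phi$. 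Interior integrands of the form $\mathfrak{F}_k^\pm\,\partial_{x_k}W_\varepsilon^\pm$ are dispatched by Cauchy--Schwarz and Young's inequality, moving half of the gradient squared to the left-hand side.

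The main obstacle is the stray interface term $\varepsilon\,D^-\!\int_{\Lambda_\varepsilon}\partial_{\vec{\nu}_\varepsilon}W_\varepsilon^-\,\Psi\,dl_x$: it carries a trace of $\nabla W_\varepsilon^-$, which the energy identity alone does not control and which cannot be absorbed by the $H^1$-coercivity. The rescue is already in the preamble: the classical regularity results of \cite{Scheftel-1965,Shish-1961}, applied with the $\varepsilon$-periodic flattening $y_1=x_1-\varepsilon\ell(x_2/\varepsilon)$ so that constants do not blow up in $\varepsilon$, yield the uniform $C^1$-bounds \eqref{est-solution} and \eqref{est-appr}. Hence $\partial_{\vec{\nu}_\varepsilon}W_\varepsilon^-$ is pointwise $\mathcal{O}(1)$ on $\Lambda_\varepsilon$, and together with $\|\Psi\|_\infty\le C$ and $|\Lambda_\varepsilon|\le C$ this troublesome term is $\mathcal{O}(\varepsilon)$. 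The boundary integral $\int_{\Lambda_\varepsilon}\mathfrak{F}_k^+\nu_k W_\varepsilon^+\,dl_x$ is handled similarly, or alternatively by a trace inequality combined with Young, producing again $\mathcal{O}(\varepsilon)+\delta\|\nabla W_\varepsilon\|^2$.

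Finally, since $W_\varepsilon$ vanishes on $\partial\Omega$, the Poincar\'e--Friedrichs inequality in $\Omega^-_\varepsilon$ and in $\breve\Omega^+_\varepsilon$ (with constants independent of $\varepsilon$, obtained by extending across $\partial G_\varepsilon$ via the extension operator used in \eqref{extention} and by a jump-controlled Poincar\'e across $\Lambda_\varepsilon$ using $\|[W_\varepsilon]\|_{L^2(\Lambda_\varepsilon)}\le C\varepsilon$) upgrades the gradient $L^2$-estimate $\|\nabla W_\varepsilon\|_{L^2}^2\le C\varepsilon$ to an $H^1$-estimate. Taking square roots yields the rate $\varepsilon^{1/2}$ claimed in \eqref{main result}, which explains why the power here is half the one obtained in the flat-interface Theorem~\ref{Theorem-main}: the interface residual is only $\mathcal{O}(\varepsilon)$ in a pointwise sense and cannot be improved without pushing the asymptotics to higher order.
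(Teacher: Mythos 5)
Your proposal follows essentially the same route as the paper's proof: you form $W_\varepsilon = U_\varepsilon - u_\varepsilon$, perform the energy estimate, split the interface integrand via the identity $\partial_{\vec{\nu}_\varepsilon}W^+_\varepsilon W^+_\varepsilon - D^-\partial_{\vec{\nu}_\varepsilon}W^-_\varepsilon W^-_\varepsilon = [\partial_{\vec{\nu}_\varepsilon}W_\varepsilon]W^+_\varepsilon + D^-\partial_{\vec{\nu}_\varepsilon}W^-_\varepsilon[W_\varepsilon]$ (precisely the paper's \eqref{main+1}), and control the troublesome $\varepsilon D^-\int_{\Lambda_\varepsilon}\partial_{\vec{\nu}_\varepsilon}W^-_\varepsilon\Psi\,dl_x$ term using the uniform $C^1$ bounds \eqref{est-solution} and \eqref{est-appr}, exactly as the paper does. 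Your explicit remarks about the Poincar\'e step and the $\varepsilon$-periodic flattening of $\Lambda_\varepsilon$ are correct elaborations of details the paper treats as routine, and your closing explanation of why only $\varepsilon^{1/2}$ (rather than $\varepsilon$) is achieved is consistent with the structure of the argument.
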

The following inequalities follow from \eqref{main result}.
\begin{corollary}
\begin{gather}
  \left\|u^-_\varepsilon -  v^-_0\right\|_{L^2(\Omega^-_\varepsilon)} +  \left\|u^+_\varepsilon -  v^+_0\right\|_{L^2(\Omega^+_\varepsilon)} \le C_1\, \varepsilon^{\frac{1}{2}},
 \\[4pt]
  \left\|\nabla u^+_\varepsilon - \begin{pmatrix}
  (1+\partial_{\xi_1}N_1) & \partial_{\xi_1}N_2
\\[2pt] \label{grad1}
 \partial_{\xi_2}N_1  & (1+\partial_{\xi_2}N_2)
\end{pmatrix}
\nabla v^+_0
- \chi_0\begin{pmatrix}
  \partial_{\xi_1}B^+_1 & \partial_{\xi_1}B^+_2
\\[2pt]
 \partial_{\xi_2}B^+_1  & \partial_{\xi_2}B^+_2
\end{pmatrix}
\nabla v^+_0|_{x_1=0}
 \right\|_{L^2(\Omega^+_\varepsilon)} \le C_2\, \varepsilon^{\frac{1}{2}},
\\[4pt] \label{grad2+}
\left\|\nabla u^-_\varepsilon - \nabla v^-_0
- \chi_0 \begin{pmatrix}
  \partial_{\xi_1}B^-_1 & \partial_{\xi_1}B^-_2
\\[2pt]
 \partial_{\xi_2}B^-_1  & \partial_{\xi_2}B^-_2
\end{pmatrix}
\nabla v^+_0|_{x_1=0}
 \right\|_{L^2(\Omega^-_\varepsilon)} \le C_2\, \varepsilon^{\frac{1}{2}}.
\end{gather}
\end{corollary}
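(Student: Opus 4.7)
The plan is to deduce all three inequalities from Theorem~\ref{Th-5-2} by the triangle inequality, after explicitly rewriting $U_\varepsilon$ and $\nabla U_\varepsilon$ (defined in \eqref{app1}) as the simpler expressions appearing on the left-hand sides of the corollary plus an $\varepsilon$-small correction. Since Theorem~\ref{Th-5-2} already furnishes $\|u^\pm_\varepsilon - U^\pm_\varepsilon\|_{H^1} \le C\,\varepsilon^{1/2}$ in both subdomains, what remains is pure bookkeeping: one only needs to verify that $U_\varepsilon$ (resp.\ $\nabla U_\varepsilon$) differs from the target expression by $\mathcal{O}(\varepsilon)$ in $L^2$.

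For the first inequality, I would subtract $v^\pm_0$ from the definition \eqref{app1}. In $\Omega^-_\varepsilon$ the difference reduces to $\varepsilon\,\chi_0(x_1)\sum_{i=1}^2 B^-_i(x/\varepsilon)\,\partial_{x_i}v^+_0(x)|_{x_1=0}$, which is uniformly bounded by $C\varepsilon$ thanks to the boundedness of $B^-_i$ (guaranteed by their exponential decay) and the $C^1$-bound on $v^+_0$. In $\breve{\Omega}^+_\varepsilon$ the difference consists of $\varepsilon\sum_i N_i(x/\varepsilon)\partial_{x_i}v^+_0$, the analogous boundary-layer block built from $B^+_i$, and (on the thin strip $\breve{\Omega}^+_\varepsilon\setminus\Omega^+_\varepsilon$) the replacement of $v^+_0$ by the $C^3$-extension $\tilde v^+_0$. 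Each piece is $\mathcal{O}(\varepsilon)$ in $L^\infty$ on a bounded domain, hence $\mathcal{O}(\varepsilon)$ in $L^2$; combining with Theorem~\ref{Th-5-2} through the triangle inequality gives the claimed $\varepsilon^{1/2}$ bound.

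For the two gradient estimates I would apply the chain rule term-by-term to \eqref{app1}. Differentiating $\varepsilon N_i(x/\varepsilon)\,\partial_{x_i}v^+_0$ produces $(\partial_{\xi_j}N_i)(x/\varepsilon)\,\partial_{x_i}v^+_0$ plus a residual $\varepsilon N_i(x/\varepsilon)\,\partial^2_{x_j x_i}v^+_0$; after adding $\nabla v^+_0$ itself, the leading part assembles into the matrix $(\delta_{ij}+\partial_{\xi_j}N_i)\nabla v^+_0$ of the statement. Similarly, differentiating $\varepsilon \chi_0(x_1)\, B^\pm_i(x/\varepsilon)\,\partial_{x_i}v^+_0|_{x_1=0}$ reproduces the $\chi_0\,(\partial_{\xi_j}B^\pm_i)\nabla v^+_0|_{x_1=0}$ block plus residuals of order $\varepsilon$, including the $\chi_0'$ term, which is harmless because its support lies outside a neighbourhood of $\mathcal{Z}$ where $B^\pm_i$ are already exponentially small. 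All these residuals are $\mathcal{O}(\varepsilon)$ pointwise, hence $\mathcal{O}(\varepsilon)$ in $L^2$, and the triangle inequality against the $H^1$-bound of Theorem~\ref{Th-5-2} then yields the $\varepsilon^{1/2}$ estimate.

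The only delicate point, where I expect most of the bookkeeping effort, is the thin layer $\breve{\Omega}^+_\varepsilon\setminus\Omega^+_\varepsilon$ (width $\mathcal{O}(\varepsilon)$) on which $U^+_\varepsilon$ is built from $\tilde v^+_0$ rather than $v^+_0$. I have to verify that replacing $\tilde v^+_0$ by $v^+_0$ in that strip contributes only $\mathcal{O}(\varepsilon)$ in $L^2$, both for values and gradients, which follows immediately from the uniform $C^1$-bound on $\tilde v^+_0$ combined with $|\breve{\Omega}^+_\varepsilon\setminus\Omega^+_\varepsilon|=\mathcal{O}(\varepsilon)$. Beyond this, no new analysis is needed: the whole corollary reduces to uniform pointwise control of the correctors $N_i$, $B^\pm_i$ and of $v^\pm_0$, all of which were established in Sect.~\ref{Sect_3} and \ref{Sect-5}.
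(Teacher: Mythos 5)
Your proposal is correct and matches the paper's (implicit) argument: the paper states only that the corollary ``follows from \eqref{main result},'' and the intended verification is exactly the triangle-inequality bookkeeping you carry out -- expand $\nabla U_\varepsilon$ by the chain rule, absorb the residuals of order $\varepsilon$ (together with the $\chi_0'$ term, harmless by the exponential decay of $B^\pm_i$), and invoke Theorem~\ref{Th-5-2}. One small simplification you could note: since the corollary's $L^2$ norms in the plus-region are taken over $\Omega^+_\varepsilon$ (not over $\breve\Omega^+_\varepsilon$), they lie in the region where $U^+_\varepsilon$ is built directly from $v^+_0$, so the ``delicate'' thin-strip comparison with $\tilde v^+_0$ that you anticipate at the end is in fact not needed at all; restricting the $H^1(\breve\Omega^+_\varepsilon)$ bound from Theorem~\ref{Th-5-2} to the subset $\Omega^+_\varepsilon$ suffices.
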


A consequence similar to Corollary~\ref{Cor-4-3}  holds, but only with an estimate of order $\mathcal{O}(\varepsilon^{\frac{1}{2}}).$

\section{Closing remarks}\label{Sect-6}

{\bf 1.} The results obtained in Section~\ref{Sect-5} do not  indicate  any influence of the interface microstructure in the homogenized problem \eqref{Homo-problem+1}. This aligns with the findings in \cite{Don-Piat-2010,Don-2019}  concerning imperfect contacts at interfaces in the case when the oscillation amplitude is of order $\mathcal{O}(\varepsilon)$  (the parameter $\kappa = 1)$ and the proportionality coefficient $\gamma$ in the solution jump is positive. The shape of the interface contributes to the homogenized problem only in the case where $\gamma = 0$
(see  \cite[Remark 4.2]{Don-Piat-2010}). 

However, in our case we observe a clear influence of the interface microstructure through the inner layer asymptotics discovered in this  work (see problems \eqref{Innner cell1+} and \eqref{Innner cell2+}). 
 The obtained results demonstrate  the advantage of the two-scale expansion approach over other methods which prove only convergence.
For instance, estimates \eqref{grad2} and  \eqref{grad2+} reveal  the rapid oscillatory nature of the solution’s gradient near the interface in the non-perforated region -- an essential structural detail that the $L^2$-limit fails to capture. This fundamental distinction underscores the deeper analytical insight provided by the two-scale expansion method, making it indispensable for accurately describing fine-scale behaviors in homogenization problems.

 \smallskip
 {\bf 2.}
 The shape of the interface in our case will manifest itself in the homogenized problem when we consider  the following second conjugation condition
\begin{equation}\label{new-second-tr}
  D^- \nabla_x u^-_{\varepsilon} \cdot \vec{\nu}_\varepsilon = \nabla_x u^+_{\varepsilon} \cdot \vec{\nu}_\varepsilon  + \Theta(x_2, \tfrac{x_2}{\varepsilon})
\ \ \text{on} \ \ \Lambda_\varepsilon,
\end{equation}
where $\Theta(x_2, \xi_2), \ x_2\in [0, d], \ \xi_2 \in [0,1],$ is $1$-periodic given function in $\xi_2,$ which belongs to the space $C^2([0,d]\times [0,1])$ and has a compact support in $(0, d).$
  Conditions of this type  can be used in physics and engineering to describe interactions where coupling effects exist and have measurable strength.

 The corresponding second conjugation condition in the homogenized problem \eqref{Homo-problem+1}
will then be as follows
\begin{equation}\label{new-hom-coup}
  D^-\, \partial_{x_1}v^-_0(x)\big|_{x_1 =0} =\  \upharpoonleft\!\! Y\!\!\upharpoonright  {h_{11}} \, \partial_{x_1}v^+_0(x)\big|_{x_1 =0} + \widehat{\Theta}(x_2), \quad x_2 \in (0,d),
\end{equation}
where
$$
\widehat{\Theta}(x_2) = \int_{0}^{1}\Theta(x_2, \xi_2) \, \sqrt{1+ |\ell^\prime(\xi_2)|^2} \, d\xi_2.
$$

The approximation to the solution in this is given by formula \eqref{app1}, where $v_0^\pm$ is now a solution to problem  \eqref{Homo-problem+1}
with the second transmission condition \eqref{new-hom-coup}.
To justify this asymptotic approximation and to obtain the same estimates as in Theorem~\ref{Th-5-2}, it is necessary to use  the inequality
$$
\bigg|\int_{\Lambda_\varepsilon} \Theta(x_2, \tfrac{x_2}{\varepsilon}) \, \phi \,  \psi \, dl_x - \int_{0}^{d} \widehat{\Theta}(x_2) \, \phi(0, x_2) \,  \psi(0, x_2) \, dx_2\bigg| \le c_1  \, \varepsilon^{\frac{1}{2}}, \quad \phi \in H^2(\Omega^+),  \ \ \psi \in H^1({\Omega}^{+, \varepsilon}),
$$
which has the same proof as the similar inequality in Theorem 4.1 \cite[\S 4, Chapt. 3 ]{Ole-Ios-Sha-1991}.

\section*{Acknowledgments}
The author  thank  for  funding by the  Deutsche Forschungsgemeinschaft (DFG, German Research Foundation) – Project Number 327154368 – SFB 1313.



\end{document}